\documentclass[12pt]{amsart}
\usepackage[utf8]{inputenc}
\usepackage[utf8]{inputenc}
\usepackage[T1]{fontenc}
\usepackage[all]{xy}
\usepackage{lipsum}
\usepackage{url}
\usepackage{tikz}
\usepackage{stackrel}
\usepackage{color}
\def\azul{}

\usepackage{amsmath,amsthm,amssymb}

\newcommand{\aspas}[1]{``{#1}''}

\usepackage{xcolor}

\usepackage{graphicx}

\theoremstyle{definition}

\newtheorem{theorem}{Theorem}[section]

\newtheorem{example}[theorem]{Example}

\newtheorem{proposition}[theorem]{Proposition}
\newtheorem{definition}[theorem]{Definition}

\newtheorem{remark}[theorem]{Remark}
\newtheorem{corollary}[theorem]{Corollary}
\numberwithin{equation}{section}

\setlength{\oddsidemargin}{.25cm}
\setlength{\evensidemargin}{.25cm}
\setlength{\textwidth}{6.3in}
\setlength{\textheight}{8.2in}

\begin{document}

\renewcommand{\bf}{\bfseries}
\renewcommand{\sc}{\scshape}

\newcommand{\TC}{\text{TC}}

\title[Sectional number of a morphism]%
{Sectional number of a morphism}
\author{Cesar A. Ipanaque Zapata}
\address{Departamento de Matem\'atica, IME-Universidade de S\~ao Paulo, Rua do Mat\~ao, 1010 CEP: 05508-090, S\~ao Paulo, SP, Brazil}
\curraddr{School of Mathematics, University of Minnesota,  
127 Vincent Hall, 206 Church St. SE, Minneapolis MN 55455, USA}
\email{cesarzapata@usp.br}
\thanks{The first author would like to thank grant\#2023/16525-7, grant\#2022/16695-7 and grant\#2022/03270-8, S\~{a}o Paulo Research Foundation (FAPESP) for financial support.}


\subjclass[2010]{Primary 18F10, 55M30; Secondary 18D70, 18C40.}    

\keywords{Sieves, Category, Category with covers, Continuous functors, (Quasi)Grothendieck topology, (Weak)pullback, (Co)limit, Multiplicative cohomology, Sectional number, Covering number, Twin prime conjecture, Goldbach's conjecture}

\begin{abstract} The genus of a fibration was introduced by Schwarz in 1962. Given a continuous map $g:A\to B$, the usual sectional number $\text{sec}_u(g)$ is the least integer~$m$ such that $B$ can be covered by $m$ open subsets, each of which admits a local section of~$g$. Likewise, the sectional category $\text{secat}(g)$ is the least integer~$m$ such that $B$ can be covered by $m$ open subsets,  each of which admits a local homotopy section of~$g$. In the case that $g$ is a fibration, the usual sectional number and the sectional category of $g$ coincide with the Schwarz's genus of $g$. In this paper, we introduce a notion of sectional number for a morphism in a category with covers, which extends the usual sectional number and  sectional category (and, of course, Schwarz's notion). We study the invariance property, the behaviour under weak pullbacks and continuous functors, and present upper bounds in terms of a notion of LS category and projective covering number for an object. In addition, we introduce a notion of multiplicative cohomology theory on a category with covers, and we use it to present a cohomological lower bound for the sectional number. Several examples of this invariant are presented to support this theory. For instance, our sectional number yields a family of new numerical invariants in Top; we also express the twin prime conjecture and Goldbach's conjecture in terms of sectional numbers in the category of sets. Our unified concept is relevant in this new setting because it provides a foundation for sectional theory in category theory and builds bridges between new areas in mathematics.  
\end{abstract}

\maketitle

\tableofcontents


\section{Introduction}\label{secintro}
In this article, a \aspas{space} refers to a topological space, and a \aspas{map} always denotes a continuous map. Fibrations are considered in the Hurewicz sense. Moreover, \aspas{$\hookrightarrow$} denotes the inclusion map.

\medskip Consider a map $f:X\to Y$. The fundamental problem in algebraic topology is to find a section of $f$, that is, a map $g:Y\to X$ such that $(f\circ g)(y)=y$ for any $y\in Y$.  In practice, such a map $g$ does not exist. In this context, we are interested in local sections. Given a subset $A$ of $Y$, we say that a map $s:A\to X$ is a local section of $f$ if $f\circ s=\mathrm{incl}_A$, and a local homotopy section of $f$ if  $f\circ s\simeq \mathrm{incl}_A$, where $\mathrm{incl}_A:A\hookrightarrow Y$ is the inclusion map. The usual \textit{sectional number} $\text{sec}_u(f)$ is the least integer~$m$ such that $Y$ can be covered by $m$ open subsets, each of which admits a local section of~$f$. We set $\text{sec}_u(f)=\infty$ if no such $m$ exists. 

\medskip Note that in the case where $f$ is a fibration, the usual sectional number of $f$ coincides with Schwarz's genus of $f$ (see \cite{schwarz1966}), that is, $\mathrm{sec}_u(f)=\mathrm{genus}(f)$, for any fibration $f$. Likewise, as noted in \cite{berstein1961}, \textit{the sectional category} $\text{secat}(f)$ is the least integer~$m$ such that $Y$ can be covered by $m$ open subsets, each of which admits a local homotopy section of~$f$. Again, we set $\text{secat}(f)=\infty$ if no such $m$ exists. 

\medskip The \textit{Lusternik-Schnirelmann category} of a space $X$, denoted by cat$(X)$, is the least integer $m$ such that $X$ can be covered by $m$ open sets, all of which are contractible within $X$ (cf. \cite{cornea2003}). On the other hand, consider the \textit{index of nilpotence}
$$\text{nil}(S)=\min\{n:~\text{every product of $n$ elements in $S$ vanishes}\}$$
defined for a subset $S$ of a ring $R$. 

\medskip To estimate the sectional number and sectional category of a map $f:X\to Y$, the following well-known properties (see \cite{schwarz1966}, \cite[Chapter~4]{zapata2022phd}, \cite{cornea2003}) are generally used:
\begin{enumerate}
\item $\text{secat}(f)\leq \text{sec}_u(f)$.
\item If $f:X\to Y$ and $g:Y\to Z$ are maps, then $\text{sec}_u(g\circ f)\geq\text{sec}_u(g)$ and $\text{secat}(g\circ f)\geq\text{secat}(g)$.
\item If $f,g:X\to Y$ are homotopic maps, then $$\text{secat}(f)=\text{secat}(g).$$
    \item If the following diagram \begin{eqnarray*}
\xymatrix{ \rule{3mm}{0mm}& X^\prime \ar[r]^{\azul{\varphi'}} \ar[d]_{f^\prime} & X \ar[d]^{f} & \\ &
       Y^\prime  \ar[r]_{\,\,\varphi} &  Y &}
\end{eqnarray*} is a weak pullback, then $\text{sec}_u\hspace{.1mm}(f^\prime)\leq \text{sec}_u\hspace{.1mm}(f).$ 
\item $\text{secat}(f)\leq \text{cat}(Y)$ whenever $Y$ is path connected.
\item Let $h^\ast$ be any multiplicative cohomology theory on the homotopy category of pairs of spaces. One has \[\mathrm{nil}\left(\mathrm{Ker}\left(f^\ast:h^\ast(Y)\to h^\ast(X)\right)\right)\leq\text{secat}(f).\]
\end{enumerate}

\medskip Our work generalizes the Schwarz's genus, developed in~\cite{schwarz1966}. The genus was first studied extensively by Schwarz for fibrations in~\cite{schwarz1966} and later by Berstein and Ganea for arbitrary maps in~\cite{berstein1961}, under the name of sectional category. Pave\v{s}i\'{c} in~\cite{pavesic2019} presents a variant of genus, which, under certain conditions, coincides with Schwarz's notion. Several examples of these invariants appear in the literature \cite{cornea2003}, \cite{farber2003}, \cite{rudyak2010higher}, \cite{colman2017}. \cite{cohen2021},  \cite{zapata2022}. Furthermore, several authors have shown that sectional theory has connections with various areas in pure and applied mathematics \cite{farber2003projective}, \cite{farber2008}, \cite{zapata2020}, \cite{zapata2022}, \cite{zapata-daciberg2023}.  For instance:
\begin{itemize}
    \item Topological robotics: Given a space $X$ and the path fibration $e_2^X:X^{[0,1]}\to X\times X$ defined by $e_2^X(\gamma)=(\gamma(0),\gamma(1))$, the \textit{topological complexity} of $X$ is defined as $\text{TC}(X)=\text{sec}_u(e_2^X)$. This notion was first studied by Michael Farber in \cite{farber2003} (see also \cite{farber2008}).

    In general, given $n\geq 2$, $1\leq s\leq n$, and a map $f:X\to Y$, consider the map $e_{n,s}^f:X^{[0,1]}\to X^{n-s}\times Y^s$ defined by \[e_{n,s}^f(\gamma)=\left(\gamma(0),\ldots,\gamma\left(\dfrac{n-s-1}{n-1}\right),f\left(\gamma\left(\dfrac{n-s}{n-1}\right)\right),\ldots, f(\gamma(1))\right). 
\] From \cite{zapata2022}, the \textit{$(n,s)$-higher topological complexity} of $f$ is defined as $\text{TC}_{n,s}(f)=\text{sec}_u\left(e_{n,s}^f\right)$. Note that, $\text{TC}_{2,1}(1_X)=\text{TC}_{2,2}(1_X)=\text{TC}(X)$. 
    \item Immersion problem of $\mathbb{RP}^m$: From \cite{farber2003projective}, the topological complexity $\text{TC}(\mathbb{RP}^m)$ for any $m\neq 1,3,7$ coincides with the smallest integer $k$ such that the projective space $\mathbb{RP}^m$ admits an immersion into $\mathbb{R}^{k-1}$. 
    \item Fixed point property: From \cite{zapata2020}, a Hausdorff space $X$ has the fixed point property (FPP) if and only if \[\text{sec}_u\hspace{.1mm}\left(\pi_{2,1}^X:F(X,2)\to X\right)=2.\]
    \item Borsuk-Ulam property: from \cite{zapata-daciberg2023}, given a Hausdorff paracompact space $X$ that admits a fixed-point free involution $\tau$, let $q:X\to X/\tau$ be the quotient map. The sectional number $\text{sec}_u(q)$ minus one coincides with the smallest integer $n$ such that $X\stackrel{\mathbb{Z}_2} {\to} S^n$.
\end{itemize}

\medskip This work aims to establish the foundations of sectional theory within category theory and to create new connections between areas of pure and applied mathematics. For instance, we generalize properties (1) to (6) into a categorical setting. This is achieved by introducing a numerical invariant that measures the minimal number of certain local sections of a morphism. 

\medskip In more detail, motivated by Grothendieck topologies, we introduce the notion of a category with covers (Definition~\ref{defn:category-with-covers}). We then define the $\mathcal{R}$-sectional number $\text{$\mathcal{R}$-sec}$ and the $\mathcal{R}$-mono sectional number $\text{$\mathcal{R}$-msec}$ of a morphism $f$ in a category $\mathcal{C}$ with covers, where $\mathcal{R}$ is a relation over $\text{Mor}_{\mathcal{C}}(c,c^\prime)$ for each $c,c^\prime\in \mathcal{C}$ (Definition~\ref{defn:sectional-number}). The usual sectional number and sectional category are recovered in the category of topological spaces with the standard notion of open coverings, in the case that $\mathcal{R}$ is equal to the trivial relation, and when $\mathcal{R}$ corresponds to the homotopy relation, respectively (Example~\ref{exam:sectional-number-Top}). 

\medskip We note that a previous version of sectional number of a morphism in a category does not appear in the literature. 

\medskip This paper is organized as follows: In Section~\ref{sec:category-covers}, we introduce the notion of a  category with covers (Definition~\ref{defn:category-with-covers}) in terms of sieves, as well as the concept of covering for an object in a category with covers (Definition~\ref{def:covering}). We recall two well-known construction of sieves (Example~\ref{exam:pullback-sieve-sieve-genrated}). A key property of sieves is the sieve generated by a given subcollection of morphisms (Proposition~\ref{prop:sub-collection}). In Definition~\ref{defn:quasi-pullback}, we present the notion of weak pullback, leading to the concept of a category with weak pullbacks in Definition~\ref{defn:category-with-weak-puullbacks}. Proposition~\ref{lem;pullback} provides an explicit construction of the pullback of a generated sieve. We also introduce the notion of a quasi Grothendieck topology (Definition~\ref{def:quasi-topology}).

\medskip Several examples of categories with covers are presented: Example~\ref{exam:discrete} discusses the discrete and trivial Grothendieck topology; Example~\ref{exam:category-equal-group} shows that  any topology on a group $G$ coincides with the trivial Grothendieck topology; Example~\ref{exam:open-cover-space-X} presents the usual Grothendieck topology on the category $O(X)$; Example~\ref{exam:cover-set} introduces a quasi Grothendieck topology in the category of sets; Example~\ref{exam:open-cover-space} describes the open, \'{e}tale, and image quasi Grothendieck topologies in the category of spaces; Example~\ref{exam:category-groups} discusses topologies on the category of groups; Example~\ref{exam:subring-cover} presents a quasi Grothendieck topology on the category of rings; Example~\ref{exam:topology-category-r-modules} discusses topologies on the category of $R$-modules; and Example~\ref{exam:subgraph-cover} introduces a quasi Grothendieck topology on the category of graphs. 

\medskip We then obtain several examples of coverings, including Example~\ref{exam:coverings-discret-trivial}, Example~\ref{covering-top-usual-open-covering}, Example~\ref{exam:coverings-group-rings-submodules}, and Example~\ref{covering-graph-covering}). 

\medskip In Section~\ref{sec:sectional-number}, we define the numerical invariants, the $\mathcal{R}$-sectional number and the $\mathcal{R}$-mono sectional number of a morphism $f:X\to Y$ in a category $\mathcal{C}$ with covers, where $\mathcal{R}$ is a relation over $\text{Mor}_{\mathcal{C}}(c,c^\prime)$ for each $c,c^\prime\in \mathcal{C}$ (Definition~\ref{defn:sectional-number}). Basic  properties are presented in Remark~\ref{rem:ine-top}.  Example~\ref{exam:sec-codomain-initial} demonstrates that $\mathcal{S}\text{-}\mathcal{R}\text{-sec}(f:X\to I)=1$ for any topology $\mathcal{S}$ in $\mathcal{C}$ satisfying Axiom (T1) and any reflexive relation $\mathcal{R}$. Example~\ref{exam:sec-discrete-topology} shows that the sectional number with the discrete Grothendieck topology is usually 1. Proposition~\ref{prop:mono-sec-coincides} states that if the covers are by monomorphisms and $\mathcal{R}$ is an equivalence relation preserving composition, the $\mathcal{R}$-sectional number coincides with the $\mathcal{R}$-mono sectional number. 

\medskip Several examples support this theory:  Example~\ref{exam:sectional-number-Top} presents the sectional number in (Top, Open) and shows that our sectional number theory recovers the usual sectional theory; Remark~\ref{rem:sec-induce-functor} indicates that our sectional number yields a family of new numerical invariants in Top; Example~\ref{exam:sectional-number-Set} expresses the twin prime conjecture and Goldbach's conjecture in terms of sectional numbers in the category of sets; Example~\ref{exam:sectional-number-top} compares the usual sectional number and sectional category with respect to the sectional numbers considering the Image and \'{e}tale topologies in Top; Example~\ref{exam:topological-complexity} introduces the notion of \'{e}tale topological complexity; Example~\ref{sec-homomorphism-groups} discusses sectional numbers in the category of groups (related to the notion of covering number of a group); Example~\ref{sec-homomorphism-rings} presents a notion of sectional number in the category of rings (which recovers the notion of covering number of a ring); Example~\ref{sec-homomorphism-modules} discusses sectional numbers in the category of modules (related to  covering numbers of a module); Example~\ref{exam:sectional-in-group} presents a notion of sectional number in a group and shows that we can express the existence of solutions to equations in terms of this numerical invariant; and Example~\ref{exam:sectional-number-graph} presents a notion of sectional number in the category of graphs. 

\medskip Proposition~\ref{rem:via-subobjects} states that the mono sectional number can be expressed in terms of coverings by subobjects. Proposition~\ref{prop:characterisation-sieves} presents a characterisation of (mono) sectional number in terms of generated sieves. Theorem~\ref{r-fe-invariance} establishes the $\mathcal{R}$-FE-invariance of $\text{$\mathcal{R}$-}\mathrm{sec}(-)$ and $\text{$\mathcal{R}$-}\mathrm{msec}(-)$. Theorem~\ref{quasi-pullback-sec} demonstrates the behavior of (mono) sectional number under weak pullbacks. In Theorem~\ref{thm:continuous-functor}, we study the sectional number under continuous functors. 

\medskip We also present upper bounds for the sectional number. For this purpose, we introduce the notions of LS category (Definition~\ref{defn:LS-category}) and projective covering number (Definition~\ref{defn:projective-covering-number}). Theorem~\ref{thm:upper-bound-LS-category} provides an upper bound for the $\mathcal{R}$-sectional number in terms of LS category, while   Theorem~\ref{thm:upper-bound-covering-number} presents an upper bound for the sectional number in terms of the projective covering number.

\medskip In Section~\ref{sec:cohomology}, Example~\ref{exam:lim-colim-construction} introduces a key construction to define a notion of cohomology theory. Definition~\ref{cohomology-theory} presents the notion of cohomology theory for a category with covers along with certain properties. Proposition~\ref{cohomology-retratopordefor} establishes the cohomology of retracts. Example~\ref{exam:lim-colim-topology} presents the lim-colim property. Definition~\ref{multiplicative-cohomology} introduces the notion of multiplicative cohomology theory. Proposition~\ref{diagonal-cup} establishes a key property of the cup product. Example~\ref{exam:multiplicative-generalized-cohomology-theory} shows that any multiplicative generalized cohomology theory is a multiplicative cohomology theory on $(\text{Top},\text{Open})$. Theorem~\ref{thm:cohomo-ic} provides a cohomological lower bound for the $\mathcal{R}$-sectional number.


\section{Categories with covers}\label{sec:category-covers}
 In this section, we introduce the concept of a category with covers (Definition~\ref{defn:category-with-covers}), defined in terms of sieves, as well as the notion of a covering for an object within a category with covers (Definition~\ref{def:covering}). These concepts are inspired by the notion of a Grothendieck topology (Definition~\ref{def:topology}). We adopt the notations used in  \cite[Appendix B.1]{lurieultra2018}. 

\subsection{Sieves} Let $\mathcal{C}$ be a category. A subcategory $\mathcal{A}$ of  $\mathcal{C}$ is called \textit{full} if, for any objects  $a,a^\prime\in \mathcal{A}$, we have $\text{Mor}_{\mathcal{A}}(a,a^\prime)=\text{Mor}_{\mathcal{C}}(a,a^\prime)$. A \textit{sieve} on $\mathcal{C}$ is a full subcategory $S\subset\mathcal{C}$ with the following property: for every morphism $f:C'\to C$ in $\mathcal{C}$, if $C$ belongs to $S$, then $C'$ also belongs to $S$. Not that any category is a sieve on itself. 

 \medskip Let $\mathcal{C}$ be a category and $C\in\mathcal{C}$ be an object. The \textit{overcategory} $\mathcal{C}_{|C}$ is the category whose objects are morphisms with codomain $C$, and whose morphisms are commutative triangles of the form:
\begin{eqnarray*}
\xymatrix{ U\ar[rd]^{}\ar@{-->}[rr]^{} & &U'\ar[ld]^{}  & \\
        &  C & &} 
\end{eqnarray*}

A \textit{sieve on} $C$ is a sieve on the overcategory $\mathcal{C}_{|C}$. Let $S$ be a full subcategory of $\mathcal{C}_{|C}$. Then $S$ is a sieve on $C$ if and only if $f\in S$ implies $f\circ g\in S$ for any morphism $g$ with $\text{codom}(g)=\text{dom}(f)$. Note that, $\mathcal{C}_{\mid C}$ is itself a sieve on $C$. 

\medskip Let $\mathcal{C}$ be a category. An \textit{initial object} in $\mathcal{C}$ is an object $I$ such that for every object $X$ in $\mathcal{C}$, there exists exactly one morphism $I\to X$. A \textit{terminal object} in $\mathcal{C}$ is an object $T$ such that for every object $X$ in $\mathcal{C}$, there exists exactly one morphism $X\to T$. If an object is both initial and terminal, it is call a \textit{zero object}. 

\begin{remark}\label{rem:initial-object}
Let $I$ be an initial object in $\mathcal{C}$. For each object $C$ in $\mathcal{C}$, the unique morphism $I\to C$ is contained in any sieve $S$ on $C$.    
\end{remark}

We recall two well known construction of sieves.

\begin{example}\label{exam:pullback-sieve-sieve-genrated}
    Let $\mathcal{C}$ be a category.
    \begin{enumerate}
        \item[(1)] \textit{Pullback of a sieve}: Let $f:D\to C$ be a morphism in $\mathcal{C}$. If $S\subset \mathcal{C}_{\mid C}$ is a sieve on the object $C$, then the full subcategory $f^\ast S\subset \mathcal{C}_{\mid D}$ defined by \[f^\ast S=\left\{(E\stackrel{g}{\to} D)\in \mathcal{C}_{\mid D}:~(E\stackrel{f\circ g}{\to} C)\in S\right\}\] is a sieve on $D$. This sieve $f^\ast S$ is called the \textit{pullback} of the sieve $S$ through $f$. Notably, we have $f^\ast \mathcal{C}_{\mid C}=\mathcal{C}_{\mid D}$ for any morphism $f:D\to C$, and $(1_C)^\ast S=S$ for any sieve $S$ on $C$. 
        \item[(2)] \textit{Sieve generated by morphisms}: Let $C\in\mathcal{C}$ be an object, and let $\{f_i:C_i\to C\}_{i\in I}$ be a collection of morphisms with codomain $C$. The full subcategory  $\mathcal{C}_{\mid C}^{(0)}\{f_i:C_i\to C\}_{i\in I}\subset \mathcal{C}_{\mid C}$ is defined as follows:
\begin{eqnarray*}
\mathcal{C}_{\mid C}^{(0)}\{f_i:C_i\to C\}_{i\in I} &=& \{(D\stackrel{g}{\to} C)\in \mathcal{C}_{\mid C}:~ g \text{ factors as }\\
& & D\to C_i\stackrel{f_i}{\to } C  \text{ for some } i\in I\}.
\end{eqnarray*}
This category represents the smallest sieve (with respect to  inclusion) that contains each morphism $f_i$ for $i\in I$. We will refer to $\mathcal{C}_{\mid C}^{(0)}\{f_i:C_i\to C\}_{i\in I}$ as the \textit{sieve generated by the morphisms} $f_i$. 

It is important to note that if $S$ is a sieve on $C$, then we can express it as $S=\mathcal{C}_{\mid C}^{(0)}\{f\colon E\to C\}_{f\in S}$. We say that a sieve $S$ on $C$ is \textit{finitely generated} if it is generated by a finite collection of morphisms having codomain $C$.  
    \end{enumerate}
\end{example}

For example, consider the following:

\begin{example}\label{exam:identity}
The sieve generated by the identity morphism $1_C:C\to C$ coincides with the entire sieve $\mathcal{C}_{\mid C}$. Additionally, the sieve generated by any isomorphism $D\to C$ also coincides with  $\mathcal{C}_{\mid C}$.
\end{example}

We have the following statement. 

\begin{proposition}[Sieve generated by a subcollection]\label{prop:sub-collection}
Let $C\in\mathcal{C}$ be an object and let $\{f_i:C_i\to C\}_{i\in I}$ be a collection of morphisms having codomain $C$.
\begin{enumerate}
    \item[(1)] Suppose that  $\{f_j:C_j\to C\}_{j\in J}$ is a subcollection of $\{f_i:C_i\to C\}_{i\in I}$ (i.e., $J\subset I$) such that for each $i\in I$, there exists $j\in J$ for which $f_i$ factors as a composition $C_i\to C_j\stackrel{f_j}{\to } C$. Then, we have \[\mathcal{C}_{\mid C}^{(0)}\{f_i:C_i\to C\}_{i\in I}=\mathcal{C}_{\mid C}^{(0)}\{f_j:C_j\to C\}_{j\in J}.\]   
    \item[(2)] Suppose there exists a finite collection  $\{g_j:D_j\to C\}_{j=1}^{m}$ of morphisms with codomain $C$ such that \[\mathcal{C}_{\mid C}^{(0)}\{f_i:C_i\to C\}_{i\in I}=\mathcal{C}_{\mid C}^{(0)}\{g_j:D_j\to C\}_{j=1}^{m}.\]  Then,  there exists a subcollection $\{f_{k_j}:C_{k_j}\to C\}_{j=1}^{m}$ of $\{f_i:C_i\to C\}_{i\in I}$ (i.e., $\{k_1,\ldots,k_m\}\subset I$) such that for each $i\in I$, there exists $\alpha_i\in \{k_1,\ldots,k_m\}$ such that $f_i$ factors as \[C_i\to C_{\alpha_i}\stackrel{f_{\alpha_i}}{\to } C.\] In particular, by Item (1), we have $\mathcal{C}_{\mid C}^{(0)}\{f_i:C_i\to C\}_{i\in I}=\mathcal{C}_{\mid C}^{(0)}\{f_{k_j}:C_{k_j}\to C\}_{j=1}^{m}$.
\end{enumerate} 
\end{proposition}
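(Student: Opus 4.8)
The plan is to work directly with the description of the generated sieve from Example~\ref{exam:pullback-sieve-sieve-genrated}(2): a morphism $(D\xrightarrow{g}C)$ lies in $\mathcal{C}_{\mid C}^{(0)}\{f_i\}_{i\in I}$ precisely when $g$ factors through some $f_i$. Both parts reduce to manipulating this factorization condition, so there are no deep obstacles; the work is bookkeeping, and the only mildly delicate point is composing factorizations correctly.

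For Item (1), I would prove the two inclusions. The inclusion $\mathcal{C}_{\mid C}^{(0)}\{f_j\}_{j\in J}\subset\mathcal{C}_{\mid C}^{(0)}\{f_i\}_{i\in I}$ is immediate since $J\subset I$, so any $g$ factoring through some $f_j$ with $j\in J$ a fortiori factors through some $f_i$ with $i\in I$. For the reverse inclusion, take $(D\xrightarrow{g}C)\in\mathcal{C}_{\mid C}^{(0)}\{f_i\}_{i\in I}$, so $g=f_i\circ h$ for some $i\in I$ and some $h:D\to C_i$. By hypothesis there is $j\in J$ and a morphism $k:C_i\to C_j$ with $f_i=f_j\circ k$. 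Then $g=f_j\circ(k\circ h)$ exhibits $g$ as factoring through $f_j$ with $j\in J$, so $(D\xrightarrow{g}C)\in\mathcal{C}_{\mid C}^{(0)}\{f_j\}_{j\in J}$. This gives the equality of the two generated sieves.

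For Item (2), the idea is that each generator $g_j:D_j\to C$ of the right-hand sieve, being itself an element of the sieve $\mathcal{C}_{\mid C}^{(0)}\{f_i\}_{i\in I}$ (apply the smallest-sieve characterization, or just note $g_j\in\mathcal{C}_{\mid C}^{(0)}\{g_l\}_{l=1}^m$), must factor through some $f_{k_j}$ with $k_j\in I$; concretely $g_j=f_{k_j}\circ t_j$ for some $t_j:D_j\to C_{k_j}$. Set $\{k_1,\dots,k_m\}\subset I$. Now given an arbitrary $i\in I$, the morphism $f_i:C_i\to C$ belongs to $\mathcal{C}_{\mid C}^{(0)}\{f_l\}_{l\in I}=\mathcal{C}_{\mid C}^{(0)}\{g_j\}_{j=1}^m$, so $f_i$ factors as $C_i\xrightarrow{s}D_{j_0}\xrightarrow{g_{j_0}}C$ for some $j_0\in\{1,\dots,m\}$. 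Composing with the factorization $g_{j_0}=f_{k_{j_0}}\circ t_{j_0}$ gives $f_i=f_{k_{j_0}}\circ(t_{j_0}\circ s)$, i.e., $f_i$ factors through $f_{\alpha_i}$ with $\alpha_i:=k_{j_0}\in\{k_1,\dots,k_m\}$. The final assertion that $\mathcal{C}_{\mid C}^{(0)}\{f_i\}_{i\in I}=\mathcal{C}_{\mid C}^{(0)}\{f_{k_j}\}_{j=1}^m$ then follows by applying Item (1) to the subcollection indexed by $\{k_1,\dots,k_m\}$, whose defining hypothesis we have just verified. I do not anticipate a genuine obstacle here; the only thing to be careful about is consistently tracking which index set (the $f$'s versus the $g$'s) a given factorization lives over, and making sure the composite morphisms are formed in the correct direction.
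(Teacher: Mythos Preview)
Your argument is correct and follows essentially the same route as the paper: both parts reduce to the factorization description of the generated sieve, with Item~(1) proved by double inclusion and Item~(2) by first choosing $k_j$ so that $g_j$ factors through $f_{k_j}$, then composing with the factorization of each $f_i$ through some $g_{j_0}$. The only cosmetic difference is that in Item~(1) the paper checks merely that each generator $f_i$ lies in $\mathcal{C}_{\mid C}^{(0)}\{f_j\}_{j\in J}$ and invokes the smallest-sieve characterization, whereas you unfold this by taking an arbitrary element; the content is identical.
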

\begin{proof}
 \noindent\begin{enumerate}
     \item[(1)] Since $\{f_j:C_j\to C\}_{j\in J}$ is a subcollection of $\{f_i:C_i\to C\}_{i\in I}$, we have $\mathcal{C}_{\mid C}^{(0)}\{f_j:C_j\to C\}_{j\in J}\subset \mathcal{C}_{\mid C}^{(0)}\{f_i:C_i\to C\}_{i\in I}$. Conversely, for each $i\in I$, since $f_i$ factors through some $f_j$, we have $f_i\in \mathcal{C}_{\mid C}^{(0)}\{f_j:C_j\to C\}_{j\in J}$, yielding  $\mathcal{C}_{\mid C}^{(0)}\{f_i:C_i\to C\}_{i\in I}\subset \mathcal{C}_{\mid C}^{(0)}\{f_j:C_j\to C\}_{j\in J}$. Thus, the equality holds. 
     \item[(2)] For each $j=1,\ldots,m$, since $g_j\in \mathcal{C}_{\mid C}^{(0)}\{f_i:C_i\to C\}_{i\in I}$, there exists $k_j\in I$ such that $g_j$ factors as a composition \[D_j\to C_{k_j}\stackrel{f_{k_j}}{\to } C.\] Thus, we have a subcollection $\{f_{k_j}:C_{k_j}\to C\}_{j=1}^{m}$ of $\{f_i:C_i\to C\}_{i\in I}$ (i.e., $\{k_1,\ldots,k_m\}\subset I$).
     
     Furthermore, since $f_i\in \mathcal{C}_{\mid C}^{(0)}\{g_j:D_j\to C\}_{j=1}^{m}$, there exists $j_i\in \{1,\ldots,m\}$ such that $f_i$ factors as a composition \[C_i\to D_{j_i}\stackrel{g_{j_i}}{\to } C.\] For this $j_i$, we have $\alpha_i\in \{k_1,\ldots,k_m\}$ such that $g_{j_i}$ factors as a composition \[D_{j_i}\to C_{\alpha_i}\stackrel{f_{\alpha_i}}{\to } C.\] Therefore, $f_i$ factors as a composition \[C_i\to C_{\alpha_i}\stackrel{f_{\alpha_i}}{\to } C.\]
 \end{enumerate}   
\end{proof}

\subsection{Weak pullback} Motivated by \cite[p. 235]{maclane1978}, we introduce the concept of a weak pullback.

\begin{definition}[Weak pullback]\label{defn:quasi-pullback}
 Let $\mathcal{C}$ be a category. A \textit{weak pullback} in $\mathcal{C}$ is defined by a strictly commutative diagram in $\mathcal{C}$ of the form:
\begin{eqnarray}\label{xfy}
\xymatrix{ \rule{3mm}{0mm}& X^\prime \ar[r]^{\varphi'} \ar[d]_{f^\prime} & X \ar[d]^{f} & \\ &
       Y^\prime  \ar[r]_{\,\,\varphi} &  Y &}
\end{eqnarray} 
such that for any strictly commutative diagram as shown on the left side of~(\ref{diagramadoble}), there exists a (not necessarily unique) morphism $h:Z\to X^\prime$ that makes the diagram on the right side of~(\ref{diagramadoble}) commute:
\begin{eqnarray}\label{diagramadoble}
\xymatrix{
Z \ar@/_10pt/[dr]_{\alpha} \ar@/^30pt/[rr]^{\beta} & & X \ar[d]^{f}  & & &
Z\rule{-1mm}{0mm} \ar@/_10pt/[dr]_{\alpha} \ar@/^30pt/[rr]^{\beta}\ar[r]^{h} & 
X^\prime \ar[r]^{\azul{\varphi'}} \ar[d]_{f^\prime} & X \\
& Y^\prime  \ar[r]_{\,\,\varphi} &  Y & & & & Y^\prime &  \rule{3mm}{0mm}}
\end{eqnarray}   
\end{definition}

Note that if ~(\ref{xfy}) is a pullback in $\mathcal{C}$, then it is a weak pullback. The concept of a weak pullback in the category of topological spaces was introduced in~\cite{zapata2022} under the name of \aspas{quasi pullback}. 

\begin{definition}[Category with weak pullbacks]\label{defn:category-with-weak-puullbacks}
 Let $\mathcal{C}$ be a category. We say that $\mathcal{C}$ is a \textit{category with weak pullbacks} if, for any morphisms $f:X\to Y$ and $\varphi:Y'\to Y$ in $\mathcal{C}$, there exist an object $X'\in\mathcal{C}$ and morphisms $f':X'\to Y'$ and $\varphi':X'\to X$ in $\mathcal{C}$  such that the following diagram  \begin{eqnarray*}
\xymatrix{ \rule{3mm}{0mm}& X^\prime \ar[r]^{\varphi'} \ar[d]_{f^\prime} & X \ar[d]^{f} & \\ &
       Y^\prime  \ar[r]_{\,\,\varphi} &  Y &}
\end{eqnarray*}  is a weak pullback. Otherwise, we say that $\mathcal{C}$ is a \textit{category without weak pullbacks}.
\end{definition}

Note that any category with pullbacks is also a category with weak pullbacks. Additionally, there exist categories without weak pullbacks. 

\medskip Now we present the following statement, which provides an explicit construction of the pullback of a generated sieve. 

\begin{proposition}[Pullback of a generated sieve]\label{lem;pullback}
Let $\mathcal{C}$ be a category with weak pullbacks, $f:D\to C$ be a morphism in $\mathcal{C}$, and $\{f_i:C_i\to C\}_{i\in I}$ be a collection of morphisms with codomain $C$. Then, the following equality holds:
\[f^\ast \left(\mathcal{C}_{\mid C}^{(0)}\{f_i:C_i\to C\}_{i\in I}\right)=\mathcal{C}_{\mid D}^{(0)}\{\widetilde{f}_i:\widetilde{C}_i\to D\}_{i\in I},\] where each $\widetilde{f}_i:\widetilde{C}_i\to D$ is defined by a weak pullback as follows:  
\begin{eqnarray}\label{xfyy}
\xymatrix{ \rule{3mm}{0mm}& \widetilde{C}_i \ar[r]^{ } \ar[d]_{\widetilde{f}_i} & C_i \ar[d]^{f_i} & \\ &
       D  \ar[r]_{\,\,f} &  C &}
\end{eqnarray}  
\end{proposition}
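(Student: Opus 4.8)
The plan is to verify the claimed equality of sieves by comparing objects: both $f^\ast\!\left(\mathcal{C}_{\mid C}^{(0)}\{f_i:C_i\to C\}_{i\in I}\right)$ and $\mathcal{C}_{\mid D}^{(0)}\{\widetilde{f}_i:\widetilde{C}_i\to D\}_{i\in I}$ are full subcategories of $\mathcal{C}_{\mid D}$, so it suffices to show they contain the same morphisms with codomain $D$. Throughout, I write $\pi_i\colon\widetilde{C}_i\to C_i$ for the horizontal morphism in the weak pullback square~(\ref{xfyy}), so that $f\circ\widetilde{f}_i=f_i\circ\pi_i$ for every $i\in I$. Unwinding Example~\ref{exam:pullback-sieve-sieve-genrated}, a morphism $(E\stackrel{g}{\to}D)$ lies in the left-hand side if and only if $f\circ g$ factors as $E\to C_i\stackrel{f_i}{\to}C$ for some $i\in I$, and it lies in the right-hand side if and only if $g$ factors as $E\to\widetilde{C}_i\stackrel{\widetilde{f}_i}{\to}D$ for some $i\in I$.

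For the inclusion $\mathcal{C}_{\mid D}^{(0)}\{\widetilde{f}_i\}_{i\in I}\subseteq f^\ast\!\left(\mathcal{C}_{\mid C}^{(0)}\{f_i\}_{i\in I}\right)$, I would take $(E\stackrel{g}{\to}D)$ in the right-hand side, so $g=\widetilde{f}_i\circ k$ for some $i\in I$ and some $k\colon E\to\widetilde{C}_i$, and compute $f\circ g=f\circ\widetilde{f}_i\circ k=f_i\circ(\pi_i\circ k)$; this exhibits $f\circ g$ as a factorization through $f_i$, so $g$ belongs to $f^\ast\!\left(\mathcal{C}_{\mid C}^{(0)}\{f_i\}_{i\in I}\right)$. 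Note that this direction uses only the strict commutativity of the square~(\ref{xfyy}), not its weak universal property.

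For the reverse inclusion, I would start from a morphism $(E\stackrel{g}{\to}D)$ with $f\circ g$ factoring as $f\circ g=f_i\circ\gamma$ for some $i\in I$ and some $\gamma\colon E\to C_i$. This is precisely a strictly commutative square with vertices $E,C_i,D,C$, i.e., the left-hand data of~(\ref{diagramadoble}) applied to the square~(\ref{xfyy}) with $Z=E$, $\alpha=g$ and $\beta=\gamma$. The weak pullback property then furnishes a morphism $h\colon E\to\widetilde{C}_i$ with $\widetilde{f}_i\circ h=g$ (and $\pi_i\circ h=\gamma$, which is not needed). Hence $g$ factors as $E\stackrel{h}{\to}\widetilde{C}_i\stackrel{\widetilde{f}_i}{\to}D$, so $g\in\mathcal{C}_{\mid D}^{(0)}\{\widetilde{f}_i\}_{i\in I}$, which completes the double inclusion.

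The only genuine input is the weak lifting property of~(\ref{xfyy}); everything else is unwinding definitions. The main point to be careful about is the orientation of the weak pullback square relative to Definition~\ref{defn:quasi-pullback}, so that the lift $h$ it produces satisfies exactly the equation $\widetilde{f}_i\circ h=g$ required for membership in the generated sieve $\mathcal{C}_{\mid D}^{(0)}\{\widetilde{f}_i\}_{i\in I}$ (rather than, say, $\pi_i\circ h=\gamma$). No finiteness or uniqueness is invoked, so the argument works verbatim for an arbitrary index set $I$.
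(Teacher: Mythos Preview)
Your proof is correct and follows essentially the same approach as the paper: both argue the two inclusions separately, using only the commutativity of the square for $\supseteq$ and invoking the weak-pullback lifting property for $\subseteq$. The only cosmetic difference is that for the first inclusion the paper simply observes that each generator $\widetilde{f}_i$ lies in the pullback sieve (hence so does the sieve it generates), whereas you unwind this by taking an arbitrary $g=\widetilde{f}_i\circ k$; the content is identical.
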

\begin{proof}
By the commutativity of the diagram~(\ref{xfyy}), for each $i\in I$, $f\circ \widetilde{f}_i$ factors as a composition $\widetilde{C}_i\to C_i\stackrel{f_i}{\to } C$. Therefore, $f\circ \widetilde{f}_i\in \mathcal{C}_{\mid C}^{(0)}\{f_i:C_i\to C\}_{i\in I}$. This implies that $\widetilde{f}_i\in f^\ast \left(\mathcal{C}_{\mid C}^{(0)}\{f_i:C_i\to C\}_{i\in I}\right)$ for each $i\in I$, establishing the inclusion  \[\mathcal{C}_{\mid D}^{(0)}\{\widetilde{f}_i:\widetilde{C}_i\to D\}_{i\in I}\subset f^\ast \left(\mathcal{C}_{\mid C}^{(0)}\{f_i:C_i\to C\}_{i\in I}\right)\]. 

Next, we prove the other inclusion. Suppose $(E\stackrel{g}{\to} D)\in f^\ast \left(\mathcal{C}_{\mid C}^{(0)}\{f_i:C_i\to C\}_{i\in I}\right)$. This means that $(E\stackrel{g}{\to} D\stackrel{f}{\to} C)\in \mathcal{C}_{\mid C}^{(0)}\{f_i:C_i\to C\}_{i\in I}$, so it can be factored as a composition $E\stackrel{\varphi}{\to} C_i\stackrel{f_i}{\to} C$ for some $i\in I$. 

Consider the strictly commutative diagram on the left-hand side of~(\ref{diagramadoblee}). Since the diagram~(\ref{xfyy}) is a weak pullback, there exists a (not necessarily unique) morphism $h:E\to \widetilde{C}_i$ that makes the right-hand side of~(\ref{diagramadoblee}) strictly commutative:
\begin{eqnarray}\label{diagramadoblee}
\xymatrix{
E \ar@/_10pt/[dr]_{g} \ar@/^30pt/[rr]^{\varphi} & & C_i \ar[d]^{f_i}  & & &
E\rule{-1mm}{0mm} \ar@/_10pt/[dr]_{g} \ar@/^30pt/[rr]^{\varphi}\ar[r]^{h} & 
\widetilde{C}_i \ar[r]^{ } \ar[d]_{\widetilde{f}_i} & C_i \\
& D  \ar[r]_{\,\,f} &  C & & & & D &  \rule{3mm}{0mm}}
\end{eqnarray} Then, $(E\stackrel{g}{\to} D)$ factors as a composition $E\stackrel{h}{\to} \widetilde{C}_i\stackrel{\widetilde{f}_i}{\to} D$, which implies $(E\stackrel{g}{\to} D)\in \mathcal{C}_{\mid D}^{(0)}\{\widetilde{f}_i:\widetilde{C}_i\to D\}_{i\in I}$. 

Thus, we conclude that $f^\ast \left(\mathcal{C}_{\mid C}^{(0)}\{f_i:C_i\to C\}_{i\in I}\right)=\mathcal{C}_{\mid D}^{(0)}\{\widetilde{f}_i:\widetilde{C}_i\to D\}_{i\in I}$.
\end{proof}

\subsection{Covers} A central idea in this work is the concept of a category with covers (cf. \cite[p. 108]{maclane1978}).

\begin{definition}[Category with covers]\label{defn:category-with-covers}
A \textit{topology} on a category $\mathcal{C}$ is a procedure $\mathcal{S}$ that assigns to each object $C\in\mathcal{C}$ a collection of sieves on $C$, which we call \textit{$\mathcal{S}$-covering sieves} or simply \textit{covering sieves}. A \textit{category with covers} is defined as a pair $\left(\mathcal{C},\mathcal{S}\right)$, where $\mathcal{C}$ is a category and $\mathcal{S}$ is a topology on $\mathcal{C}$. 
\end{definition}

 A direct example of category with covers is a category equipped with a Grothendieck topology. Therefore, we will briefly review the concept of a Grothendieck topology.

\begin{definition}[Grothendieck topology]\label{def:topology}
Let $\mathcal{C}$ be a category. A \textit{Grothendieck topology} on  $\mathcal{C}$ is a procedure that assigns to each object $C\in\mathcal{C}$ a collection of sieves on $C$, referred to as \textit{covering sieves}. This assignment must satisfy the following properties: 
\begin{itemize}
    \item[(T1)] For each object $C\in \mathcal{C}$, the sieve $\mathcal{C}_{\mid C}$ is a covering sieve on $C$.
    \item[(T2)] For each morphism $f:D\to C$ in $\mathcal{C}$ and any covering sieve $S\subset\mathcal{C}_{\mid C}$ on $C$, the  pullback $f^\ast S$ is a covering sieve on $D$.
    \item[(T3)] Let $S\subset\mathcal{C}_{\mid C}$ be a covering sieve on an object $C\in\mathcal{C}$, and let $S'\subset\mathcal{C}_{\mid C}$ be another sieve such that for every morphism $f:D\to C$ in $S$, the pullback $f^\ast S'$ is a covering sieve on $D$. Then, $S'$ is a covering sieve on $C$.
\end{itemize}
\end{definition}

\begin{remark}
    Note that any category equipped with a Grothendieck topology is also a category with covers. In this context, the covering sieves (as defined in  Definition~\ref{defn:category-with-covers}) correspond to the covering sieves defined in the context of Grothendieck topology (see Definition~\ref{def:topology}).
\end{remark}

We now introduce the following Grothendieck topologies.

\begin{example}\label{exam:discrete}  
   Let $\mathcal{C}$ be a category.
 \begin{enumerate}
     \item[(1)] [Discrete Grothendieck topology] The procedure that assigns to each object $C\in\mathcal{C}$ the collection of all sieves on $C$--meaning any sieve is considered a covering sieve--is a Grothendieck topology known as the \textit{discrete Grothendieck topology} on $\mathcal{C}$. This topology is the biggest (finest) among all topologies on $\mathcal{C}$.
     \item[(2)] [Trivial Grothendieck topology](cf. \cite[p. 113]{maclane1992}) The procedure that assigns to each object $C\in\mathcal{C}$ the collection consisting solely of the sieve $\mathcal{C}_{\mid C}$--meaning that $\mathcal{C}_{\mid C}$ is the only covering sieve on $C$--defines a Grothendieck topology known as the \textit{trivial Grothendieck topology} on $\mathcal{C}$. It is important to note that this topology is the smallest (coarsest) among all topologies on $\mathcal{C}$.
 \end{enumerate}   
\end{example}

The following example illustrates that the discrete and trivial Grothendieck topologies can coincide. 

\begin{example}[A group as a category]\label{exam:category-equal-group}
    Let $G$ be a group. This group can be viewed as a category with a single object, denoted $\ast$, where the morphisms correspond to the elements of the group $G$. The group multiplication serves as the composition operation for the morphisms in this category. Consequently, the identity element $1$ of the group corresponds to the identity morphism $1_\ast$ of the unique object $\ast$. This demonstrates that a group is equivalent to a category with only one object in which each morphism is an isomorphism \cite[p. 11]{maclane1992}. 
    
    Since any sieve $S$ on the unique object $\ast$ corresponds to $G_{\mid \ast}$, and the identity morphism $1$ can be expressed as a composition $g^{-1}g$ for any morphism $g\in S$, it follows that any topology on $G$ coincides with the trivial Grothendieck topology. In particular, the discrete and trivial Grothendieck topologies on $G$ are the same.  
\end{example}

Now, we will present the usual Grothendieck topology on the category $O(X)$.

\begin{example}[Category $O(X)$](cf. \cite[p. 111]{maclane1992})\label{exam:open-cover-space-X}
    Let $X$ be a space, and let $O(X)$ be the category whose objects are the open subsets of $X$. There is exactly one morphism $U\to V$ if and only if $U\subset V$, with this morphism being the inclusion map $U\hookrightarrow V$). Consequently, the overcategory $O(X)_{| U}$ consists of all open subsets of $U$. A sieve on $U$ is simply a collection $S$ of open subsets of $U$ such that if $V'\subset V$ and $V\in S$, then $V'\in S$.
    
    For each $U\in O(X)$, we define a sieve $S$ to be a covering sieve on $U$ if $U$ is contained in (and thus equal to) the union of the open sets in $S$. This assignment forms a Grothendieck topology on $O(X)$.
    \begin{enumerate}
        \item[(1)] Condition (T1) follows directly from the fact that $U$ is equal to the union of all open subsets of $U$, which means that the sieve $O(X)_{| U}$ is indeed a covering sieve on $U$. 
        \item[(2)] To verify condition (T2), let $i:V\subset U$ be a morphism and suppose $S$ is a covering sieve on $U$. This means  $S$ is a sieve on $U$ such that $U$ is equal to the union of the open sets in $S$. 
        
        For each $A\in S$, we consider the following diagram: 
    \begin{eqnarray*}
\xymatrix{ \rule{3mm}{0mm}& V\cap A \ar@{^{(}->}[r]^{} \ar@{^{(}->}[d]_{} &A \ar@{^{(}->}[d]^{} & \\ &
       V  \ar@{^{(}->}[r]_{i} &  U &}
\end{eqnarray*} It is well known that this diagram is a pullback. Since $ V\cap A$ is an open subset of $V$, the collection $S^\ast=\{V\cap A\}_{A\in S}$ forms a sieve on $V$ (utilizing the property of $S$ being a sieve). Moreover, since $V=\bigcup_{A\in S}  V\cap A$, the collection $S^\ast$ is a covering sieve on $V$. Notable, we have $S=O(X)_{| U}^{(0)}\{A\subset U\}_{A\in S}$ and $S^\ast=O(X)_{| V}^{(0)}\{V\cap A\subset V\}_{A\in S}$. By Proposition~\ref{lem;pullback}, we obtain the pullback: \[i^\ast\left(S\right)=S^\ast,\] which conforms that $i^\ast\left(S\right)$ is a covering sieve on $V$. 
     \item[(3)] To see condition (T3), let $S$ be a covering sieve on $U$, and suppose $S'$ is another sieve on $U$ with the property that for every morphism $i:V\to U$ belonging to $S$, the pullback $i^\ast\left(S'\right)$ is a covering sieve on $V$. We aim to show that $S'$ is a covering sieve on $U$, meaning that $U=\bigcup_{A\in S'}A$. 
     
     For each morphism $i:W\subset U$ in $S$, consider the following diagram:
      \begin{eqnarray*}
     \xymatrix{ \rule{3mm}{0mm}& W\cap A \ar@{^{(}->}[r]^{} \ar@{^{(}->}[d]_{} &A \ar@{^{(}->}[d]^{}& \\ &
       W  \ar@{^{(}->}[r]_{i} &  U &}
\end{eqnarray*} This diagram is a pullback for each $A\in S'$. By Proposition~\ref{lem;pullback}, we have: \[(i)^\ast\left(S'\right)=O(X)_{\mid W}^{(0)}\{W\cap A\subset W\}_{A\in S'}.\] Since $S'$ is a sieve on $U$, the collection $\{W\cap A\subset W\}_{A\in S'}$ is also a sieve on $W$.  Therefore, $\{W\cap A\subset W\}_{A\in S'}=O(X)_{\mid W}^{(0)}\{W\cap A\subset W\}_{A\in S'}$ is a covering sieve on $W$, which implies  $W=\bigcup_{A\in S'}W\cap A$. 

Now, we have: \[U=\bigcup_{W\in S}W.\] Next, we can express this as: \[U=\bigcup_{W\in S}\left(W\cap\bigcup_{A\in S'} A\right).\] 
This give us: \[U=\left(\bigcup_{A\in S'} A\right)\cap\left(\bigcup_{W\in S} W\right).\] Since $\bigcup_{W\in S}W=U$, it follows that:\[U=\left(\bigcup_{A\in S'} A\right)\cap U.\] Consequently, we find that:\[U=\bigcup_{A\in S'} A.\]
Thus, we conclude that $S'$ is indeed a covering sieve on $U$.
    \end{enumerate}   
\end{example}

\subsection{Quasi Grothendieck topology} Sometimes, the axiom (T3) in the definition of a Grothendieck topology does not hold. To address this, we introduce the concept of a quasi Grothendieck topology.

\begin{definition}[Quasi Grothendieck topology]\label{def:quasi-topology}
Let $\mathcal{C}$ be a category. A \textit{quasi Grothendieck topology} on  $\mathcal{C}$ is a procedure that assigns to each object $C\in\mathcal{C}$ a collection of sieves on $C$, which are also referred to as \textit{covering sieves}. This assignment must satisfy the following properties: 
\begin{itemize}
    \item[(T1)] For every object $C\in \mathcal{C}$, the sieve $\mathcal{C}_{\mid C}$ is a covering sieve on $C$.
    \item[(T2)] For each morphism $f:D\to C$ in $\mathcal{C}$ and any covering sieve $S\subset\mathcal{C}_{\mid C}$ on $C$, the  pullback $f^\ast S$ is a covering sieve on $D$.
\end{itemize}
\end{definition}

Note that any Grothendieck topology is also a quasi Grothendieck topology. Furthermore, any category equipped with a quasi Grothendieck topology is a category with covers, where the covering sieves (as defined in Definition~\ref{defn:category-with-covers}) correspond to the covering sieves in the context of a quasi Grothendieck topology (see Definition~\ref{def:quasi-topology}). 

\medskip We now introduce the following quasi Grothendieck topology in the category of sets. 

\begin{example}[Category of sets]\label{exam:cover-set}
    Let $\text{Set}$ be the category of sets and functions. In this category, the overcategory $\text{Set}_{| X}$ consists of all functions with codomain $X$. A sieve on $X$ is a family $S$ of functions with codomain $X$ such that if $f:Y\to X$ belongs to $S$ and $g:Z\to Y$ is any function, then $f\circ g\in S$. 
    
    We can define a quasi Grothendieck topology in Set. For each set $X$, a sieve $S$ is considered a covering sieve on $X$ if $S=\text{Set}_{\mid X}^{(0)}\{U_\lambda\hookrightarrow X\}_{\lambda\in \Lambda}$, where each $U_\lambda$ is a  subset of $X$ and $X=\bigcup_{\lambda\in\Lambda} U_\lambda$. This assignment defines a quasi Grothendieck topology on $\text{Set}$ what we refer to as the \textit{subset quasi Grothendieck topology}.
    \begin{enumerate}
        \item[(1)] Condition (T1) follows directly because $X$ is a subset of itself, and we have $\text{Set}_{\mid X}=\text{Set}_{\mid X}^{(0)}\{1_X:X\to X\}$. 
        \item[(2)] To verify condition (T2), let $f:Y\to X$ be a function and assume that $S$ is a covering sieve on $X$. Specifically, we have \[S=\text{Set}_{\mid X}^{(0)}\{U_\lambda\hookrightarrow X\}_{\lambda\in \Lambda},\] where each $U_\lambda$ is a subset of $X$ such that  $X=\bigcup_{\lambda\in\Lambda} U_\lambda$. 
        
        For each $\lambda\in\Lambda$, consider the following diagram: 
    \begin{eqnarray*}
\xymatrix{ \rule{3mm}{0mm}& V_\lambda \ar[r]^{f_|} \ar@{^{(}->}[d]_{} &U_\lambda \ar@{^{(}->}[d]^{} & \\ &
       Y  \ar[r]_{\,\,f} &  X &}
\end{eqnarray*} where $V_\lambda=f^{-1}(U_\lambda)$. This diagram is a pullback. Note that, each $V_\lambda$ is a subset of $Y$. Furthermore, since: \[Y=\bigcup_{\lambda\in\Lambda} V_\lambda,\] it follows that the collection $\text{Set}_{\mid Y}^{(0)}\{V_\lambda\hookrightarrow Y\}_{\lambda\in \Lambda}$ forms a covering sieve on $Y$.

    By Proposition~\ref{lem;pullback}, we have: \[f^\ast S=\text{Set}_{\mid Y}^{(0)}\{V_\lambda\hookrightarrow Y\}_{\lambda\in \Lambda},\] which confirms that $f^\ast S$ is indeed a covering sieve on $Y$.  Thus, condition (T2) holds.
\end{enumerate} 
\end{example} 

We now introduce the following quasi Grothendieck topologies in the category of spaces:

\begin{example}[Category of spaces]\label{exam:open-cover-space}
    Let $\text{Top}$ be the category of spaces and maps. The overcategory $\text{Top}_{| X}$ consists of maps with codomain $X$, and a sieve on $X$ is a family of maps with codomain $X$ such that if $f:Y\to X$ is in the sieve, then $f\circ g$ is in the sieve for any map $g:Z\to Y$. We present several quasi Grothendieck topologies:
    \begin{enumerate}
        \item[(i)] [Open quasi Grothendieck topology] A sieve $S$ is a covering sieve on $X$ if $S=\text{Top}_{\mid X}^{(0)}\{U_\lambda\hookrightarrow X\}_{\lambda\in \Lambda}$, where each $U_\lambda$ is an open subset of $X$ and $X=\bigcup_{\lambda\in\Lambda} U_\lambda$. This assignment defines a quasi Grothendieck topology on $\text{Top}$, which we refer to as the \textit{open quasi Grothendieck topology}.
    \begin{enumerate}
        \item[(1)] Condition (T1) holds directly since $X$ is an open subset of itself, and we have $\text{Top}_{\mid X}=\text{Top}_{\mid X}^{(0)}\{1_X:X\to X\}$. 
        \item[(2)] To verify condition (T2), let $f:Y\to X$ be a map and $S$ a covering sieve on $X$. Specifically, we have $S=\text{Top}_{\mid X}^{(0)}\{U_\lambda\hookrightarrow X\}_{\lambda\in \Lambda}$, where each $U_\lambda$ is an open subset of $X$ such that  $X=\bigcup_{\lambda\in\Lambda} U_\lambda$. 
        
        For each $\lambda\in\Lambda$, the following diagram is a pullback: 
    \begin{eqnarray*}
\xymatrix{ \rule{3mm}{0mm}& V_\lambda \ar[r]^{f_|} \ar@{^{(}->}[d]_{} &U_\lambda \ar@{^{(}->}[d]^{} & \\ &
       Y  \ar[r]_{\,\,f} &  X &}
\end{eqnarray*} Here, $V_\lambda=f^{-1}(U_\lambda)$. Each $V_\lambda$ is an open subset of $Y$, and we have $Y=\bigcup_{\lambda\in\Lambda} V_\lambda$. Thus, the collection $\text{Top}_{\mid Y}^{(0)}\{V_\lambda\hookrightarrow Y\}_{\lambda\in \Lambda}$ forms a covering sieve on $Y$.

    By Proposition~\ref{lem;pullback}, the pullback is given by \[f^\ast S=\text{Top}_{\mid Y}^{(0)}\{V_\lambda\hookrightarrow Y\}_{\lambda\in \Lambda},\] which confirms that $f^\ast S$ is indeed a covering sieve. 
\end{enumerate} 
\item[(ii)] [\'{E}tale quasi Grothendieck topology](cf. \cite[p. 88]{maclane1992}) A map $f:Y\to X$ is called \textit{\'{e}tale} if it is a local homeomorphism. Specifically, for each point $y\in Y$, there exists an open subset $V\subset Y$ containing $y$ such that $f(V)$ is an open subset of $X$ and the restriction $f_{\mid V}:V\to f(V)$ is a homeomorphism. 

For each space $X$, we define a sieve $S$ to be a covering sieve on $X$ if \[S=\text{Top}_{\mid X}^{(0)}\{Y_\lambda\stackrel{f_\lambda}{\to} X\}_{\lambda\in \Lambda},\] where each $f_\lambda:Y_\lambda\to X$ is an \'{e}tale map and $X=\bigcup_{\lambda\in\Lambda} f_\lambda(Y_\lambda)$. This assignment establishes a quasi Grothendieck topology on the category $\text{Top}$, which we refer to as the \textit{\'{e}tale quasi Grothendieck topology}.
    \begin{enumerate}
        \item[(1)] Condition (T1) holds directly because we have $\text{Top}_{\mid X}=\text{Top}_{\mid X}^{(0)}\{1_X:X\to X\}$. 
        \item[(2)] To verify condition (T2), consider a map $f:Y\to X$ and a covering sieve $S$ on $X$. Specifically, we have \[S=\text{Top}_{\mid X}^{(0)}\{Y_\lambda\stackrel{f_\lambda}{\to} X\}_{\lambda\in \Lambda},\] where each $f_\lambda:Y_\lambda\to X$ is an \'{e}tale map, and $X=\bigcup_{\lambda\in\Lambda} f_\lambda(Y_\lambda)$. 
        
        For each $\lambda\in\Lambda$, consider the canonical pullback: 
    \begin{eqnarray*}
\xymatrix{ \rule{3mm}{0mm}& f^\ast Y_\lambda \ar[r]^{\pi_2} \ar[d]_{\pi_1} &Y_\lambda \ar[d]^{f_\lambda} & \\ &
       Y  \ar[r]_{\,\,f} &  X &}
\end{eqnarray*} Here, $f^\ast Y_\lambda=\{(y_1,y_2)\in Y\times Y_\lambda:~f(y_1)=f_\lambda(y_2)\}$ and $\pi_j(y_1,y_2)=y_j$ for $j=1,2$. The  projection $\pi_1:f^\ast Y_\lambda\to Y$ is an \'{e}tale map. Furthermore, since $Y=\bigcup_{\lambda\in\Lambda} \pi_1\left(f^\ast Y_\lambda\right)$, we conclude that $\text{Top}_{\mid Y}^{(0)}\{f^\ast Y_\lambda\stackrel{\pi_1}{\to}Y\}_{\lambda\in \Lambda}$ is a covering sieve on $Y$.

    By Proposition~\ref{lem;pullback}, we then have \[f^\ast S=\text{Top}_{\mid Y}^{(0)}\{f^\ast Y_\lambda\stackrel{\pi_1}{\to}Y\}_{\lambda\in \Lambda},\] which confirms that $f^\ast S$ is indeed a covering sieve on $Y$. 
    \end{enumerate} 
\item[(iii)] [Image quasi Grothendieck topology] For each space $X$, a sieve $S$ is considered a covering sieve on $X$ if it can be expressed in the form \[S=\text{Top}_{\mid X}^{(0)}\{Y_\lambda\stackrel{f_\lambda}{\to} X\}_{\lambda\in \Lambda},\] where each $f_\lambda:Y_\lambda\to X$ is a map and $X=\bigcup_{\lambda\in\Lambda} f_\lambda(Y_\lambda)$. This assignment is a quasi Grothendieck topology on $\text{Top}$, which we refer to as the \textit{image quasi Grothendieck topology}.
    \begin{enumerate}
        \item[(1)] Condition (T1) is satisfied straightforwardly because $X$ is an open subset of itself and we have $\text{Top}_{\mid X}=\text{Top}_{\mid X}^{(0)}\{1_X:X\to X\}$. 
        \item[(2)] To verify condition (T2), suppose that $f:Y\to X$ is a map and $S$ is a covering sieve on $X$. Specifically, $S=\text{Top}_{\mid X}^{(0)}\{Y_\lambda\stackrel{f_\lambda}{\to} X\}_{\lambda\in \Lambda}$, where each $f_\lambda:Y_\lambda\to X$ is a map and $X=\bigcup_{\lambda\in\Lambda} f_\lambda(Y_\lambda)$. For each $\lambda\in\Lambda$, consider the canonical pullback: 
    \begin{eqnarray*}
\xymatrix{ \rule{3mm}{0mm}& f^\ast Y_\lambda \ar[r]^{\pi_2} \ar[d]_{\pi_1} &Y_\lambda \ar[d]^{f_\lambda} & \\ &
       Y  \ar[r]_{\,\,f} &  X &}
\end{eqnarray*} where $f^\ast Y_\lambda=\{(y_1,y_2)\in Y\times Y_\lambda:~f(y_1)=f_\lambda(y_2)\}$ and $\pi_j(y_1,y_2)=y_j$ for $j=1,2$. Note that  $Y=\bigcup_{\lambda\in\Lambda} \pi_1\left(f^\ast Y_\lambda\right)$ and thus $\text{Top}_{\mid Y}^{(0)}\{f^\ast Y_\lambda\stackrel{\pi_1}{\to}Y\}_{\lambda\in \Lambda}$ is a covering sieve on $Y$.

    By Proposition~\ref{lem;pullback}, we conclude that the pullback is given by \[f^\ast S=\text{Top}_{\mid Y}^{(0)}\{f^\ast Y_\lambda\stackrel{\pi_1}{\to}Y\}_{\lambda\in \Lambda},\] which confirms that it is a covering sieve on $Y$. 
    \end{enumerate}    
    \end{enumerate}
    Thus, we have the following inclusions:
\[\text{Open}\subset \text{\'{E}tale}\subset \text{Image}.\]
\end{example}

\subsection{More examples of topologies} Next, we present topologies on the category of groups. 

\begin{example}[Category of groups]\label{exam:category-groups}
    Let $\text{Grp}$ be the category of groups and group homomorphisms. In this category, the overcategory $\text{Grp}_{| G}$ consists of all group homomorphisms with codomain $G$. A sieve on $G$ is defined as a family $S$ of group homomorphisms with codomain $G$ that satisfies the property: if $(H\stackrel{f}{\to} G)\in S$ and $g:K\to H$ is any group homomorphism, then $f\circ g$ is also in $S$. 
   \begin{enumerate}
       \item[(I)] \label{exam:subgroup-cover} [Quasi Grothendieck topology by subgroups] For each group $G$, we define a sieve $S$ to be a covering sieve on $G$ if it can be expressed as $S=\text{Grp}_{\mid G}^{(0)}\{G_\lambda\hookrightarrow G\}_{\lambda\in \Lambda}$, where each $G_\lambda$ is a subgroup of $G$ and $G=\bigcup_{\lambda\in\Lambda} G_\lambda$. This assignment defines a quasi Grothendieck topology on the category $\text{Grp}$.
    \begin{enumerate}
        \item[(1)] Condition (T1) holds trivially since $G$ is a subgroup of itself, and we have $\text{Grp}_{\mid G}=\text{Grp}_{\mid G}^{(0)}\{1_G:G\to G\}$.
        \item[(2)] To verify condition (T2), suppose that $h:H\to G$ is a group homomorphism and let $S$ be a covering sieve on $G$. Specifically, we have $S=\text{Grp}_{\mid G}^{(0)}\{G_\lambda\hookrightarrow G\}_{\lambda\in \Lambda}$, where each $G_\lambda$ is a subgroup of $G$ and $G=\bigcup_{\lambda\in\Lambda} G_\lambda$. 
        
        For each $\lambda\in\Lambda$, consider the diagram: 
    \begin{eqnarray*}
\xymatrix{ \rule{3mm}{0mm}& H_\lambda \ar[r]^{h_|} \ar@{^{(}->}[d]_{} &G_\lambda \ar@{^{(}->}[d]^{} & \\ &
       H  \ar[r]_{\,\,h} &  G &}
\end{eqnarray*} where $H_\lambda=h^{-1}(G_\lambda)$. This diagram is a pullback. Each $H_\lambda$ is a subgroup of $H$, and we can see that  $H=\bigcup_{\lambda\in\Lambda} H_\lambda$. Thus, the collection $\text{Grp}_{\mid H}^{(0)}\{H_\lambda\hookrightarrow H\}_{\lambda\in \Lambda}$ forms a covering sieve on $H$.  

        By Proposition~\ref{lem;pullback}, we conclude that the pullback \[h^\ast S=\text{Grp}_{\mid H}^{(0)}\{H_\lambda\hookrightarrow H\}_{\lambda\in \Lambda}\] is indeed a covering sieve on $H$.  
    \end{enumerate}   
   \item[(II)] \label{exam:proper-subgroup-cover} [Cover with proper subgroups] For each group $G$, we define a sieve $S$ to be a covering sieve on $G$ whenever $S=\text{Grp}_{\mid G}^{(0)}\{G_\lambda\hookrightarrow G\}_{\lambda\in \Lambda}$, where each $G_\lambda$ is a proper subgroup of $G$ and $G=\bigcup_{\lambda\in\Lambda} G_\lambda$. While the category $\text{Grp}$ together with this collection of covering sieves forms a category with covers, this assignment does not constitute a quasi Grothendieck topology on $\text{Grp}$ because Condition (T1) does not hold. 
   
   Additionally, Condition (T2) is also violated. Notably, $\text{Grp}_{\mid G}$ is not a covering sieve for any group $G$. The identity homomorphism $1_G$ cannot be factored as a composition $G\to H\hookrightarrow G$ for some proper subgroup $H\subset G$. 
   \item[(III)] \label{exam:cyclic-subgroup-cover} [Cover with cyclic subgroups] For each group $G$, we define a sieve $S$ to be a covering sieve on $G$ whenever $S=\text{Grp}_{\mid G}^{(0)}\{C_\lambda\hookrightarrow G\}_{\lambda\in \Lambda}$, where each $C_\lambda$ is a cyclic subgroup of $G$ and $G=\bigcup_{\lambda\in\Lambda} C_\lambda$. Similar to the previous case, the category $\text{Grp}$ along with this collection of covering sieves constitutes a category with covers, but this assignment does not qualify as a quasi Grothendieck topology on $\text{Grp}$ due to the failure of Condition (T1). 
   
   Condition (T2) is also not satisfied. Specifically, $\text{Grp}_{\mid G}$ is not a covering sieve for any non-cyclic group $G$ because the identity homomorphism $1_G$ does not factor as a composition $G\to C\hookrightarrow G$ for some cyclic subgroup $C\subset G$. 
   \end{enumerate} 
\end{example}

Now, we present a quasi Grothendieck topology on the category of rings. Here, rings are assumed to be associative, but they need not be commutative or possess a multiplicative identity. Additionally, we define a subring in the weakest sense: $S\subset R$ is considered a subring if $S$ is an additive subgroup that is closed under multiplication. In particular, $S$ need not contain a multiplicative identity, even if $R$ itself is unital. Furthermore, we define a ring homomorphism in the weakest sense: $f:R\to S$ is a ring homomorphism if $f$ preserves addition and multiplication. Again, $f$ need not preserve the multiplicative identity, even if both $R$ and $S$ are unital.   

\begin{example}[Category of rings]\label{exam:subring-cover}
    Let $\text{Ring}$ be the category of rings and ring homomorphisms. The overcategory $\text{Ring}_{| R}$ consists of all ring homomorphisms with codomain $R$. A sieve on $R$ is simply a family $S$ of ring homomorphisms with codomain $R$ such that $f\circ g\in S$ for any $(T\stackrel{f}{\to} R)\in S$ and any ring homomorphism $g:L\to T$. For each ring $R$, we consider a sieve $S$ to be a covering sieve on $R$ whenever $S=\text{Ring}_{\mid R}^{(0)}\{R_\lambda\hookrightarrow R\}_{\lambda\in \Lambda}$, where each $R_\lambda$ is a subring of $R$ and $R=\bigcup_{\lambda\in\Lambda} R_\lambda$. This assignment forms a quasi Grothendieck topology on $\text{Ring}$.
    \begin{enumerate}
        \item[(1)] Condition (T1) follows directly because $R$ is a subring of itself, and $\text{Ring}_{\mid R}=\text{Ring}_{\mid R}^{(0)}\{1_R:R\to R\}$.
        \item[(2)] To see condition (T2), suppose that $h:T\to R$ is a ring homomorphism and $S$ is a covering sieve on $R$, that is, $S=\text{Ring}_{\mid R}^{(0)}\{R_\lambda\hookrightarrow R\}_{\lambda\in \Lambda}$, where each $R_\lambda$ is a subring of $R$ and $R=\bigcup_{\lambda\in\Lambda} R_\lambda$. 
        For each $\lambda\in\Lambda$, it is well known that the following diagram 
    \begin{eqnarray*}
\xymatrix{ \rule{3mm}{0mm}& T_\lambda \ar[r]^{h_|} \ar@{^{(}->}[d]_{} &R_\lambda \ar@{^{(}->}[d]^{} & \\ &
       T  \ar[r]_{\,\,h} &  R &}
\end{eqnarray*} is a pullback, where $T_\lambda=h^{-1}(R_\lambda)$. Note that each $T_\lambda$ is a subring of $T$ and $T=\bigcup_{\lambda\in\Lambda} T_\lambda$; thus, $\text{Ring}_{\mid T}^{(0)}\{T_\lambda\hookrightarrow T\}_{\lambda\in \Lambda}$ is a covering sieve.  
        By Proposition~\ref{lem;pullback}, we have that the pullback \[h^\ast S=\text{Ring}_{\mid T}^{(0)}\{T_\lambda\hookrightarrow T\}_{\lambda\in \Lambda},\] and thus it is a covering sieve on $T$.  
    \end{enumerate}   
\end{example}

Next, we present topologies on the category of $R$-modules. Given a ring $R$, a $R$-module $P$ is called \textit{projective} if and only if for every surjective $R$-homomorphism $f:N\twoheadrightarrow M$ and every $R$-homomorphism $g:P\to M$, there exists an $R$-homomorphism $h:P\to N$ such that $f\circ h=g$.
 \begin{eqnarray*}
\xymatrix@C=3cm{& N \ar@{->>}[d]^{f}   \\  
       P  \ar@{-->}[ru]^{h}\ar[r]_{g} &  M}
\end{eqnarray*}

\begin{example}[Category of $R$-modules]\label{exam:topology-category-r-modules}
  Given a ring $R$, let $R\text{-Mod}$ be the category of left modules over $R$ (or $R$-modules) and $R$-homomorphisms. The overcategory $R\text{-Mod}_{| M}$, for an $R$-module $M$, consists of all $R$-homomorphisms with codomain $M$. A sieve on $M$ is defined as a family $S$ of $R$-homomorphisms with codomain $M$ with the property that $f\circ g\in S$ for any $(N\stackrel{f}{\to} M)\in S$ and any $R$-homomorphism $g:P\to N$.
  \begin{enumerate}
      \item[(I)]\label{exam:submodule-cover}[Quasi Grothendieck topolgy with submodules] For each $R$-module $M$, we consider a sieve $S$ to be a covering sieve on $M$ whenever $S=R\text{-Mod}_{\mid M}^{(0)}\{M_\lambda\hookrightarrow M\}_{\lambda\in \Lambda}$, where each $M_\lambda$ is a submodule of $M$ and $M=\bigcup_{\lambda\in\Lambda} M_\lambda$. This assignment defines a quasi Grothendieck topology on $R\text{-Mod}$.
    \begin{enumerate}
        \item[(1)] Condition (T1) follows directly because $M$ is a submodule of itself, and we have $R\text{-Mod}_{\mid M}=R\text{-Mod}_{\mid M}^{(0)}\{1_M:M\to M\}$.
        \item[(2)] To verify condition (T2), suppose that $h:N\to M$ is an $R$-homomorphism and that $S$ is a covering sieve on $M$. Specifically, $S=R\text{-Mod}_{\mid M}^{(0)}\{M_\lambda\hookrightarrow M\}_{\lambda\in \Lambda}$, where each $M_\lambda$ is a submodule of $M$ and $M=\bigcup_{\lambda\in\Lambda} M_\lambda$. 
        For each $\lambda\in\Lambda$, it is well known that the following diagram 
    \begin{eqnarray*}
\xymatrix{ \rule{3mm}{0mm}& N_\lambda \ar[r]^{h_|} \ar@{^{(}->}[d]_{} &M_\lambda \ar@{^{(}->}[d]^{} & \\ &
       N  \ar[r]_{\,\,h} &  M &}
\end{eqnarray*} is a pullback, where $N_\lambda=h^{-1}(M_\lambda)$. Note that each $N_\lambda$ is a submodule of $N$ and $N=\bigcup_{\lambda\in\Lambda} N_\lambda$; thus, $R\text{-Mod}_{\mid N}^{(0)}\{N_\lambda\hookrightarrow N\}_{\lambda\in \Lambda}$ is a covering sieve on $N$.  
        By Proposition~\ref{lem;pullback}, we have that the pullback \[h^\ast S=R\text{-Mod}_{\mid N}^{(0)}\{N_\lambda\hookrightarrow N\}_{\lambda\in \Lambda},\] and thus it is a covering sieve on $N$.  
    \end{enumerate}
    \item[(II)][Covers with projective submodules]\label{exam:projectivesubmodules} For each $R$-module $M$, we consider a sieve $S$ to be a covering sieve on $M$ whenever $S=R\text{-Mod}_{\mid M}^{(0)}\{P_\lambda\hookrightarrow M\}_{\lambda\in \Lambda}$, where each $P_\lambda$ is a projective submodule of $M$ and $M=\bigcup_{\lambda\in\Lambda} P_\lambda$. The category $R\text{-Mod}$, together with this collection of covering sieves, forms a category with covers. However, this assignment is not a quasi Grothendieck topology on $R\text{-Mod}$ because Condition (T1) does not hold. 
    
    Note that $R\text{-Mod}_{| M}$ is not a covering sieve for any non-projective module $M$ because the identity homomorphism $1_M$ does not factor as a composition $M\to P\hookrightarrow M$ for some projective submodule $P\subset M$.     
  \end{enumerate}
\end{example}

The category of right modules is defined in a similar manner. In the case that $R$ is a field $K$, we have that $K$-Mod is the category of $K$-vector spaces and $K$-linear maps, which we denote  by $K$-Vect. 

Similarly, we can define the following categories with covers: the category of groups with proper cyclic subgroups, the category of rings with proper subrings, the category of $R$-modules with proper submodules, the category of $R$-modules with proper projective submodules, and the category of $K$-vector spaces with proper vector subspaces. 


\medskip Now, we present a quasi Grothendieck topology on the category of graphs.

\begin{example}[Category of graphs]\label{exam:subgraph-cover} 
 Let $\text{Graph}$ be the category of graphs and graph homomorphisms. The overcategory $\text{Graph}_{| G}$ consists of all graph homomorphisms with codomain $G$. A sieve on $G$ is defined as a family $S$ of graph homomorphisms with codomain $G$ such that $f\circ g\in S$ for any $(H\stackrel{f}{\to} G)\in S$ and any graph homomorphism $g:K\to H$. For each graph $G$, we consider a sieve $S$ to be a covering sieve on $G$ whenever $S=\text{Graph}_{\mid G}^{(0)}\{G_\lambda\hookrightarrow G\}_{\lambda\in \Lambda}$, where each $G_\lambda$ is a subgraph of $G$ and $G=\bigcup_{\lambda\in\Lambda} G_\lambda$. This assignment defines a quasi Grothendieck topology on $\text{Graph}$.
    \begin{enumerate}
        \item[(1)] Condition (T1) follows directly because $G$ is a subgraph of itself, and we have $\text{Grp}_{\mid G}=\text{Grp}_{\mid G}^{(0)}\{1_G:G\to G\}$.
        \item[(2)] To verify condition (T2), suppose that $h:H\to G$ is a graph homomorphism and that $S$ is a covering sieve on $G$. Specifically, $S=\text{Grp}_{\mid G}^{(0)}\{G_\lambda\hookrightarrow G\}_{\lambda\in \Lambda}$, where each $G_\lambda$ is a subgraph of $G$ and $G=\bigcup_{\lambda\in\Lambda} G_\lambda$. 
        
        For each $\lambda\in\Lambda$, it is well known that the following diagram 
    \begin{eqnarray*}
\xymatrix{ \rule{3mm}{0mm}& H_\lambda \ar[r]^{h_|} \ar@{^{(}->}[d]_{} &G_\lambda \ar@{^{(}->}[d]^{} & \\ &
       H  \ar[r]_{\,\,h} &  G &}
\end{eqnarray*} is a pullback, where $H_\lambda=h^{-1}(G_\lambda)$. Note that each $H_\lambda$ is a subgraph of $H$ and $H=\bigcup_{\lambda\in\Lambda} H_\lambda$; thus $\text{Graph}_{\mid H}^{(0)}\{H_\lambda\hookrightarrow H\}_{\lambda\in \Lambda}$ is a covering sieve.  
        By Proposition~\ref{lem;pullback}, we have that the pullback \[h^\ast S=\text{Graph}_{\mid H}^{(0)}\{H_\lambda\hookrightarrow H\}_{\lambda\in \Lambda},\] and thus it is a covering sieve on $H$.  
    \end{enumerate}   
\end{example}

Now, we introduce the notion of covering for objects.

\begin{definition}[Covering for objects]\label{def:covering}
Let $\left(\mathcal{C},\mathcal{S}\right)$ be a category with covers and let $C$ be an object in $\mathcal{C}$. We will say that a collection of morphisms $\{f_i:C_i\to C\}_{i\in I}$ is a \textit{covering} for $C$ if it generates a covering sieve on $C$;  that is, $\mathcal{C}_C^{(0)}\{f_i:C_i\to C\}_{i\in I}\in\mathcal{S}$. 
\end{definition}

Hence, we have the following examples.

\begin{example}\label{exam:coverings-discret-trivial}
    Let $\mathcal{C}$ be a category and let $C$ be an object in $\mathcal{C}$.
    \begin{enumerate}
        \item[(1)] \label{exam:discrete-gro} With the discrete Grothendieck topology on $\mathcal{C}$, any collection of morphisms  $\{f_i:C_i\to C\}_{i\in I}$ is a covering for $C$. 
        \item[(2)]\label{exam:identity-covering} With any quasi Grothendieck topology on $\mathcal{C}$, Example~\ref{exam:identity}, together with axiom $(T1)$, implies that $\{1_C:C\to C\}$ is a covering for $C$. 
    \end{enumerate} 
\end{example}

\begin{example}\label{exam:covering-subsets}
    Consider the quasi Grothendieck topology on $\text{Set}$ presented in Example~\ref{exam:cover-set}. Let $X$ be a set and $\{U_\lambda\}_{\lambda\in\Lambda}$ be a collection of subsets of $X$ such that $X=\bigcup_{\lambda\in\Lambda} U_\lambda$. The collection $\{U_\lambda\hookrightarrow X\}_{\lambda\in\Lambda}$ of inclusions is a covering for $X$. In this case, we say that the collection $\{U_\lambda\}_{\lambda\in\Lambda}$ is a \textit{covering by subsets} for the set $X$. 
\end{example}

\begin{example}\label{covering-top-usual-open-covering}
    Consider the quasi Grothendieck topology on $\text{Top}$ presented in Example~\ref{exam:open-cover-space}. Let $X$ be a space and $\{U_\lambda\}_{\lambda\in\Lambda}$ be a collection of open subsets of $X$ such that $X=\bigcup_{\lambda\in\Lambda} U_\lambda$. The collection $\{U_\lambda\hookrightarrow X\}_{\lambda\in\Lambda}$ of inclusions is a covering for $X$. Hence, in this case, the notion of covering in the category setting coincides with the usual notion of open covering in topological setting.  
\end{example}

\textit{\'{E}tale covering} and \textit{image covering} are defined in a similar manner. 

\begin{example}\label{exam:coverings-group-rings-submodules}
\noindent\begin{enumerate}
    \item[(1)] \label{group-category-covers}
Recall that $\text{Grp}$ denotes the category of groups and group homomorphisms.
\begin{enumerate}
    \item[(i)]  Consider the quasi Grothendieck topology on $\text{Grp}$ presented in Example~\ref{exam:subgroup-cover}(I). Let $G$ be a group and $\{G_\lambda\}_{\lambda\in\Lambda}$ be a collection of subgroups of $G$ such that $G=\bigcup_{\lambda\in\Lambda} G_\lambda$. The collection of inclusions $\{G_\lambda\hookrightarrow G\}_{\lambda\in\Lambda}$ is a covering for $G$. In this case, we say that the collection $\{G_\lambda\}_{\lambda\in\Lambda}$ is a \textit{covering} for the group $G$.  
    \item[(ii)]  Consider the covering sieves in $\text{Grp}$ presented in Example~\ref{exam:proper-subgroup-cover}(II). Let $G$ be a group and $\{G_\lambda\}_{\lambda\in\Lambda}$ be a collection of proper subgroups of $G$ such that $G=\bigcup_{\lambda\in\Lambda} G_\lambda$. The collection of inclusions $\{G_\lambda\hookrightarrow G\}_{\lambda\in\Lambda}$ is a covering for $G$. In this case, we say that the collection $\{G_\lambda\}_{\lambda\in\Lambda}$ is a \textit{covering by proper subgroups} for the group $G$. 
    \item[(iii)]  Consider the covering sieves in $\text{Grp}$ presented in Example~\ref{exam:proper-subgroup-cover}(III). Let $G$ be a group and $\{C_\lambda\}_{\lambda\in\Lambda}$ be a collection of cyclic subgroups of $G$ such that $G=\bigcup_{\lambda\in\Lambda} C_\lambda$. The collection of inclusions $\{C_\lambda\hookrightarrow G\}_{\lambda\in\Lambda}$ is a covering for $G$. In this case, we say that the collection $\{C_\lambda\}_{\lambda\in\Lambda}$ is a \textit{covering by cyclic subgroups} for the group $G$. 
\end{enumerate}
\item[(2)] \label{ring-category-covers}  Consider the quasi Grothendieck topology on $\text{Ring}$ presented in Example~\ref{exam:subring-cover}. Let $R$ be a ring and $\{R_\lambda\}_{\lambda\in\Lambda}$ be a collection of subrings of $R$ such that $R=\bigcup_{\lambda\in\Lambda} R_\lambda$. The collection of inclusions $\{R_\lambda\hookrightarrow R\}_{\lambda\in\Lambda}$ is a covering for $R$. In this case, we say that the collection $\{R_\lambda\}_{\lambda\in\Lambda}$ is a \textit{covering} for the ring $R$. 
\item[(3)] \label{module-category-covers}  Recall that $R$-\text{Mod} denotes the category of $R$-modules and $R$-homomorphisms.
\begin{enumerate}
    \item[(i)] Consider the quasi Grothendieck topology on $R$-\text{Mod} presented in Example~\ref{exam:submodule-cover}(I). Let $M$ be a $R$-module and $\{M_\lambda\}_{\lambda\in\Lambda}$ be a collection of submodules of $M$ such that $M=\bigcup_{\lambda\in\Lambda} M_\lambda$. The collection of inclusions $\{M_\lambda\hookrightarrow M\}_{\lambda\in\Lambda}$ is a covering for $M$. In this case, we say that the collection $\{M_\lambda\}_{\lambda\in\Lambda}$ is a \textit{covering} for the $R$-module $M$. 
   \item[(ii)]  Consider the topology on $R$-\text{Mod} presented in Example~\ref{exam:projectivesubmodules}(II). Let $M$ be a $R$-module and $\{P_\lambda\}_{\lambda\in\Lambda}$ be a collection of projective submodules of $M$ such that $M=\bigcup_{\lambda\in\Lambda} P_\lambda$. The collection of inclusions $\{P_\lambda\hookrightarrow M\}_{\lambda\in\Lambda}$ is a covering for $M$. In this case, we say that the collection $\{P_\lambda\}_{\lambda\in\Lambda}$ is a \textit{covering by projective submodules} for the $R$-module $M$. 
\end{enumerate} 
\end{enumerate} 
\end{example}

\textit{Covering by proper subrings}, \textit{covering by proper cyclic subgroups}, \textit{covering by proper submodules}, \textit{covering by proper vector subspaces} and \textit{covering by proper projective submodules} are defined similarly.

\begin{example}\label{covering-graph-covering}
    Consider the quasi Grothendieck topology on $\text{Graph}$ presented in Example~\ref{exam:subgraph-cover}. Let $G$ be a graph and $\{G_\lambda\}_{\lambda\in\Lambda}$ be a collection of subgraphs of $G$ such that $G=\bigcup_{\lambda\in\Lambda} G_\lambda$. The collection of inclusions $\{G_\lambda\hookrightarrow G\}_{\lambda\in\Lambda}$ is a covering for $G$. In this case, we say that the collection $\{G_\lambda\}_{\lambda\in\Lambda}$ is a \textit{covering} for the graph $G$.
\end{example}


\section{Sectional number}\label{sec:sectional-number}
In this section, for an arbitrary morphism $f:X\to Y$ in a category $(\mathcal{C},\mathcal{S})$ with covers, we define a numerical invariant, the sectional number of $f$, in terms of covers of the target $Y$. Indeed, if $\mathcal{R}$ is a relation over $\text{Mor}_{\mathcal{C}}(c,c^\prime)$ for each $c,c^\prime\in \mathcal{C}$, we define the $\mathcal{R}$-sectional number of $f$ (Definition~\ref{defn:sectional-number}). In the case that $\mathcal{R}$ is the \textit{trivial relation}, that is, $f\mathcal{R}f'$ if and only if $f=f'$, we will refer to this as the  \aspas{sectional number} instead of the \aspas{$\mathcal{R}$-sectional number}.

\subsection{Definition and basic results} Let $\mathcal{C}$ be a category, and let $f:X\to Y$ and $g:Z\to Y$ be morphisms in $\mathcal{C}$. Let $\mathcal{R}$ be a relation over $\text{Mor}_{\mathcal{C}}(c,c^\prime)$ for each $c,c^\prime\in \mathcal{C}$. We say that 
a morphism $s:Z\to X$ in $\mathcal{C}$ is a \textit{$\mathcal{R}$-$g$-section} of $f$ if $(f\circ s)\mathcal{R}g$. That is, the following diagram commutes up to the relation $\mathcal{R}$:
\begin{eqnarray*}
\xymatrix{ X \ar[rr]^{f} & &Y  & \\
        &  Z\ar@{-->}[lu]^{s}\ar[ru]_{g} & &} 
\end{eqnarray*}  In the case that $\mathcal{R}$ is the trivial relation, instead of saying \aspas{$\mathcal{R}$-$g$-section}, we will say \aspas{$g$-section}. That is, a morphism $s:Z\to X$ in $\mathcal{C}$ is a \textit{$g$-section} of $f$ if $f\circ s=g$. 

\begin{remark}\label{rem:section-sieve}
Let $\mathcal{C}$ be a category. 
\noindent\begin{enumerate}
    \item[(1)] Note that $f:X\to Y$ admits a $1_Y$-section $Y\to X$ if and only if $\mathcal{C}^{(0)}_{|Y}\{f:X\to Y\}=\mathcal{C}_{|Y}$.
    \item[(2)]  For $g:Z\to Y$, we have that $f:X\to Y$ admits a $g$-section $Z\to X$ if and only if $g$  factors as a composition $Z\to X\stackrel{f}{\to } Y$, which is equivalent to  $\mathcal{C}^{(0)}_{|Y}\{g:Z\to Y\}\subset \mathcal{C}^{(0)}_{|Y}\{f:X\to Y\}$.
\end{enumerate}  
\end{remark}

A morphism $f:Y\to X$ in a category $\mathcal{C}$ is called a \textit{monomorphism}, denoted by $f:Y\rightarrowtail X$, if for any object $Z\in\mathcal{C}$ and any morphisms $g_1,g_2:Z\to Y$, the condition $f\circ g_1=f\circ g_2$ implies that $g_1=g_2$. For example, the identity morphism $1_X:X\to X$ is a monomorphism.

\medskip Now, we introduce the notion of sectional number. 

 \begin{definition}[Sectional number]\label{defn:sectional-number}
Let $(\mathcal{C},\mathcal{S})$ be a category with covers, and suppose that $\mathcal{R}$ is a relation over $\text{Mor}_{\mathcal{C}}(c,c^\prime)$ for each $c,c^\prime\in \mathcal{C}$. For a morphism $f:X\to Y$ in $\mathcal{C}$:
\begin{enumerate}
    \item[(1)] The \textit{$\mathcal{R}$-sectional number} of $f$, denoted by $\mathcal{R}$-sec$(f)$, is the least integer $m$ such that there exists a covering $\{g_j:Y_j\to Y\}_{j=1}^m$ for $Y$ such that for each $j=1,\ldots,m$, $f$ admits a $\mathcal{R}$-$g_j$-section. Such a covering $\{g_j:Y_j\to Y\}_{j=1}^m$ is called \textit{$\mathcal{R}$-categorical}. If no such $m$ exists, we set $\mathcal{R}$-sec$(f)=\infty$. 
    \item[(2)] The \textit{$\mathcal{R}$-mono sectional number} of $f$, denoted by $\mathcal{R}$-msec$(f)$, is the least integer $m$ such that there exists a covering $\{g_j:Y_j\to Y\}_{j=1}^m$ for $Y$ with the property that for each $j=1,\ldots,m$, $g_j$ is a monomorphism and $f$ admits a $\mathcal{R}$-$g_j$-section. Such a covering $\{g_j:Y_j\rightarrowtail Y\}_{j=1}^m$ is called \textit{$\mathcal{R}$-mono categorical}. If no such $m$ exists, we set $\mathcal{R}$-msec$(f)=\infty$.
\end{enumerate}
\begin{eqnarray*}
\xymatrix{ X \ar[rr]^{f} & &Y  & \\
        &  Y_j\ar@{-->}[lu]^{}\ar[ru]_{g_j} & &} 
\end{eqnarray*}
\end{definition}

\medskip In the case that $\mathcal{R}$ is the trivial relation,  instead of saying \aspas{$\mathcal{R}$-sectional number}, we will say \aspas{sectional number} and write $\mathrm{sec}(f)$ instead of $\mathcal{R}$-sec$(f)$. Similarly, instead of saying  \aspas{$\mathcal{R}$-categorical}, we will say \aspas{categorical}. Likewise, instead of saying \aspas{$\mathcal{R}$-mono sectional number}, we will say \aspas{mono sectional number} and write $\mathrm{msec}(f)$ instead of  $\mathcal{R}$-msec$(f)$. Finally, instead of saying \aspas{$\mathcal{R}$-mono categorical}, we will say \aspas{mono categorical}. 

\medskip To emphasise the collection of covering sieves $\mathcal{S}$, we write $\mathcal{S}$-$\mathcal{R}$-sec$(f)$, $\mathcal{S}$-$\mathcal{R}$-msec$(f)$, $\mathcal{S}$-sec$(f)$, and $\mathcal{S}$-msec$(f)$ for the $\mathcal{R}$-sectional number, $\mathcal{R}$-mono sectional number, sectional number, and mono sectional number, respectively.     

\begin{remark}\label{rem:ine-top} 
Let $\mathcal{C}$ be a category with covers, and let $\mathcal{R}$ be a relation over $\text{Mor}_{\mathcal{C}}(c,c^\prime)$ for each $c,c^\prime\in \mathcal{C}$.
\begin{enumerate}
    \item[(1)] Note that $\text{$\mathcal{R}$-sec}(f)\leq\min\{\text{$\mathcal{R}$-msec}(f),\text{sec}(f)\}\leq\text{msec}(f)$, for any reflexive relation $\mathcal{R}$. The inequalities $\text{$\mathcal{R}$-sec}(f)\leq\text{sec}(f)$ and $\text{$\mathcal{R}$-msec}(f)\leq\text{msec}(f)$ follow because any section is also a $\mathcal{R}$-section (here, we use the fact that $\mathcal{R}$ is reflexive).
    \item[(2)]\label{rem:two-topologies} Suppose that $\mathcal{S}$ and $\mathcal{T}$ are two topologies on $\mathcal{C}$. If $\mathcal{T}\subset\mathcal{S}$ (that is, any $\mathcal{T}$-covering sieve is also a $\mathcal{S}$-covering sieve), then \[\text{$\mathcal{S}$-$\mathcal{R}$-sec$(f)$}\leq \text{$\mathcal{T}$-$\mathcal{R}$-sec$(f)$}.\]  Similarly, one has \[\text{$\mathcal{S}$-$\mathcal{R}$-msec$(f)\leq \mathcal{T}$-$\mathcal{R}$-msec$(f)$}.\]
\end{enumerate}
\end{remark}

Remark~\ref{rem:ine-top}(1) generalizes property (1) mentioned in the introduction. 

\medskip We have the following example. 

\begin{example}[$\mathcal{S}\text{-}\mathcal{R}\text{-sec}(f:X\to I)=1$]\label{exam:sec-codomain-initial}
 Let $\mathcal{C}$ be a category with an initial object $I$. Let $\mathcal{S}$ be a topology on $\mathcal{C}$ satisfying Axiom (T1). Given any morphism $f:X\to I$ in $\mathcal{C}$ whose codomain is the initial object $I$, we have $\mathcal{S}\text{-}\mathcal{R}\text{-msec}(f)=1$ (and, of course, $\mathcal{S}\text{-}\mathcal{R}\text{-sec}(f)=1$) for any reflexive relation $\mathcal{R}$ over $\text{Mor}_{\mathcal{C}}(c,c^\prime)$ for each $c,c^\prime\in \mathcal{C}$. This follows because we have the following commutative diagram:
 \begin{eqnarray*}
\xymatrix{ X \ar[rr]^{f} & &I   \\
        &  I\ar@{-->}[lu]^{}\ar[ru]_{1_I} & &} 
\end{eqnarray*}
\end{example}

Let $\mathcal{C}$ be any category. We denote the discrete Grothendieck topology on $\mathcal{C}$ as \aspas{Discrete}. For any topology $\mathcal{S}$ on $\mathcal{C}$, we have \[\mathcal{S}\subset \mathrm{Discrete},\] and thus, by Remark~\ref{rem:ine-top}(2), the inequality  \[\text{Discrete-$\mathcal{R}$-sec$(f)$}\leq \text{$\mathcal{S}$-$\mathcal{R}$-sec$(f)$}\] holds for any morphism $f$ in $\mathcal{C}$ and any relation $\mathcal{R}$. Similarly, the inequality \[\text{Discrete-$\mathcal{R}$-msec$(f)$}\leq \text{$\mathcal{S}$-$\mathcal{R}$-msec$(f)$}\] also holds. 

\medskip The following example demonstrates that the sectional number with the discrete Grothendieck topology is typically 1. 

\begin{example}[Sectional number with the discrete topology]\label{exam:sec-discrete-topology}
 Let $\mathcal{C}$ be a category with the discrete Grothendieck topology. We have $\text{$\mathcal{R}$-sec}(f)=1$ for any morphism $f:X\to Y$ in $\mathcal{C}$ and any reflexive relation $\mathcal{R}$. This follows because $\{f:X\to Y\}$ is a covering for $Y$ and the identity morphism $1_X:X\to X$ serves as a $\mathcal{R}$-$f$-section of $f$ (here we use the fact that $R$ is reflexive). 
 
 Likewise, note that $\text{$\mathcal{R}$-msec}(h)=1$ for any monomorphism $h:X\to Y$ in $\mathcal{C}$ and any reflexive relation $\mathcal{R}$.  
 \begin{eqnarray*}
\xymatrix{ X \ar[rr]^{f} & &Y  & \\
        &  X\ar@{-->}[lu]^{1_X}\ar[ru]_{f} & &} & \xymatrix{ X \ar@{>->}[rr]^{h} & &Y  & \\
        &  X\ar@{-->}[lu]^{1_X}\ar@{>->}[ru]_{h} & &}
\end{eqnarray*}
\end{example}

Before presenting the following result, we introduce a new notion. We will say that a covering $\{f_j:D_j\to C\}_{j\in J}$ for $C$ is \textit{through monomorphism} if there exists a collection of monomorphisms $\{g_i:C_i\rightarrowtail C\}_{i\in I}$ such that \[\mathcal{C}_C^{(0)}\{f_j:D_j\to C\}_{j\in J}=\mathcal{C}_C^{(0)}\{g_i:C_i\rightarrowtail C\}_{i\in I}.\] In particular, the  collection of monomorphisms $\{g_i:C_i\rightarrowtail C\}_{i\in I}$ forms a covering for $C$. 

The following statement asserts that when the covers are through monomorphisms, the sectional number coincides with the mono sectional number. Let $\mathcal{R}$ be a relation over $\text{Mor}_{\mathcal{C}}(c,c^\prime)$ for each $c,c^\prime\in \mathcal{C}$. We say that $\mathcal{R}$ \textit{preserves composition} if $(f\circ f')\mathcal{R}(h\circ h')$ whenever $f\mathcal{R}h$ in $\text{Mor}_{\mathcal{C}}(c,c^\prime)$ and $f'\mathcal{R}h'$ in $\text{Mor}_{\mathcal{C}}(c'',c)$.

\begin{proposition}[Sectional equal mono sectional]\label{prop:mono-sec-coincides}
   Let $\mathcal{C}$ be a category with covers, $f:X\to Y$ be a morphism, and $\mathcal{R}$ be an equivalence relation over $\text{Mor}_{\mathcal{C}}(c,c^\prime)$ for each $c,c^\prime\in \mathcal{C}$ that preserves composition. Suppose that each covering for an object in $\mathcal{C}$ is through monomorphisms. Then, we have \[\text{$\mathcal{R}$-sec}(f)=\text{$\mathcal{R}$-msec}(f).\]
\end{proposition}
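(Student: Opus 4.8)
The plan is to prove both inequalities $\text{$\mathcal{R}$-sec}(f)\leq \text{$\mathcal{R}$-msec}(f)$ and $\text{$\mathcal{R}$-msec}(f)\leq \text{$\mathcal{R}$-sec}(f)$, of which the first is immediate from Remark~\ref{rem:ine-top}(1) (it holds for every reflexive $\mathcal{R}$, and an equivalence relation is reflexive). So the work is entirely in the reverse inequality $\text{$\mathcal{R}$-msec}(f)\leq \text{$\mathcal{R}$-sec}(f)$, which I would establish by taking an $\mathcal{R}$-categorical covering of size $m=\text{$\mathcal{R}$-sec}(f)$ and manufacturing from it an $\mathcal{R}$-mono categorical covering of size at most $m$.

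First I would fix an $\mathcal{R}$-categorical covering $\{g_j\colon Y_j\to Y\}_{j=1}^m$ for $Y$, so $\mathcal{C}^{(0)}_{\mid Y}\{g_j\colon Y_j\to Y\}_{j=1}^m\in\mathcal{S}$ and for each $j$ there is a morphism $s_j\colon Y_j\to X$ with $(f\circ s_j)\,\mathcal{R}\,g_j$. By the running hypothesis, every covering for an object of $\mathcal{C}$ is through monomorphisms, so there is a collection of monomorphisms $\{h_i\colon Z_i\rightarrowtail Y\}_{i\in I}$ with
\[
\mathcal{C}^{(0)}_{\mid Y}\{g_j\colon Y_j\to Y\}_{j=1}^m=\mathcal{C}^{(0)}_{\mid Y}\{h_i\colon Z_i\rightarrowtail Y\}_{i\in I}.
\]
Now apply Proposition~\ref{prop:sub-collection}(2) to this equality (reading the finite collection $\{g_j\}_{j=1}^m$ as the one playing the role of $\{g_j\}_{j=1}^m$ there and the monomorphism family as $\{f_i\}_{i\in I}$): there is a subcollection $\{h_{i_1},\ldots,h_{i_m}\}$ of the $h_i$ such that each $h_i$ factors through some $h_{i_k}$, and hence, by part (1) of that proposition, $\mathcal{C}^{(0)}_{\mid Y}\{h_{i_k}\colon Z_{i_k}\rightarrowtail Y\}_{k=1}^m=\mathcal{C}^{(0)}_{\mid Y}\{h_i\}_{i\in I}\in\mathcal{S}$, so $\{h_{i_k}\colon Z_{i_k}\rightarrowtail Y\}_{k=1}^m$ is a covering for $Y$ by monomorphisms, of size at most $m$.

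It remains to show $f$ admits an $\mathcal{R}$-$h_{i_k}$-section for each $k$; this is the main obstacle, and it is where both the equivalence-relation and composition-preservation hypotheses are used. Since $\{h_{i_k}\}_{k=1}^m$ generates the same sieve as $\{g_j\}_{j=1}^m$, in particular $h_{i_k}$ lies in $\mathcal{C}^{(0)}_{\mid Y}\{g_j\}_{j=1}^m$, so $h_{i_k}$ factors as $Z_{i_k}\stackrel{u}{\to}Y_{j}\stackrel{g_{j}}{\to}Y$ for some $j$ and some morphism $u$. Then $s_{j}\circ u\colon Z_{i_k}\to X$ is the candidate section, and I compute $f\circ(s_{j}\circ u)=(f\circ s_{j})\circ u$; since $(f\circ s_{j})\,\mathcal{R}\,g_{j}$ and $u\,\mathcal{R}\,u$ (reflexivity), composition-preservation gives $(f\circ s_{j})\circ u\;\mathcal{R}\;g_{j}\circ u=h_{i_k}$, i.e. $(f\circ(s_{j}\circ u))\,\mathcal{R}\,h_{i_k}$, as required. (Transitivity and symmetry of $\mathcal{R}$ are not strictly needed for this step but are harmless; one could alternatively phrase the composition compatibility differently, and I would double-check that the definition of ``preserves composition'' in the excerpt — requiring $f'\mathcal{R}h'$ — applies here with $f'=h'=u$ via reflexivity, which it does.) Combining, $\{h_{i_k}\colon Z_{i_k}\rightarrowtail Y\}_{k=1}^m$ is an $\mathcal{R}$-mono categorical covering of size $\leq m$, whence $\text{$\mathcal{R}$-msec}(f)\leq m=\text{$\mathcal{R}$-sec}(f)$, completing the proof.
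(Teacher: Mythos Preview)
Your argument is correct and follows essentially the same route as the paper: both invoke the through-monomorphisms hypothesis to replace the sieve generators, apply Proposition~\ref{prop:sub-collection}(2) to extract a finite subfamily of $m$ monomorphisms generating the same covering sieve, and then factor each of those monomorphisms through one of the original $g_j$'s to transport the $\mathcal{R}$-section via composition-preservation. The only cosmetic difference is that you explicitly record the easy inequality via Remark~\ref{rem:ine-top}(1) and add a side remark on which axioms of $\mathcal{R}$ are actually used, whereas the paper leaves the first inequality implicit.
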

\begin{proof}
Suppose that $\text{$\mathcal{R}$-sec}(f)=m$ and consider a covering $\{g_j:Y_j\to Y\}_{j=1}^m$ for $Y$ such that for each $j=1,\ldots,m$, $f$ admits a $\mathcal{R}$-$g_j$-section. By hypothesis, the covering $\{g_j:Y_j\to Y\}_{j=1}^m$ for $Y$ is through monomorphisms, meaning  there exists a collection of monomorphisms $\{f_i:U_i\rightarrowtail Y\}_{i\in I}$ such that \[\mathcal{C}_Y^{(0)}\{g_j:Y_j\to Y\}_{j=1}^m=\mathcal{C}_Y^{(0)}\{f_i:U_i\rightarrowtail Y\}_{i\in I}.\] 

By Proposition~\ref{prop:sub-collection}, there exists a subcollection $\{f_{k_j}:U_{k_j}\rightarrowtail Y\}_{j=1}^{m}$ of $\{f_i:U_i\rightarrowtail Y\}_{i\in I}$ (where $\{k_1,\ldots,k_m\}\subset I$) such that for each $i\in I$ there exists $\alpha_i\in \{k_1,\ldots,k_m\}$ with $f_i$ factors as a composition \[U_i\to U_{\alpha_i}\stackrel{f_{\alpha_i}}{\rightarrowtail } Y.\] 

Furthermore, we have \[\mathcal{C}_{\mid C}^{(0)}\{f_i:U_i\rightarrowtail Y\}_{i\in I}=\mathcal{C}_{\mid C}^{(0)}\{f_{k_j}:U_{k_j}\rightarrowtail Y\}_{j=1}^{m},\] and thus \[\mathcal{C}_{\mid C}^{(0)}\{g_j:Y_j\to Y\}_{j=1}^m=\mathcal{C}_{\mid C}^{(0)}\{f_{k_j}:U_{k_j}\rightarrowtail Y\}_{j=1}^{m}.\] 

For each $\ell=1,\ldots,m$, $f_{k_\ell}$ factors as a composition \[U_{k_\ell}\stackrel{\psi_\ell}{\to} Y_{j_\ell}\stackrel{g_{j_\ell}}{\to } Y\] for some $j_\ell\in\{1,\ldots,m\}$. Moreover, for each $j_\ell$, there exists a morphism $s_{j_\ell}:Y_{j_\ell}\to X$ such that \[(f\circ s_{j_\ell})\mathcal{R}g_{j_\ell}.\] 

Thus, we have \[\left((f\circ s_{j_\ell})\circ\psi_\ell\right)\mathcal{R}\left(g_{j_\ell}\circ\psi_\ell\right),\] using the fact that $\mathcal{R}$ preserves composition. Consequently, we obtain \[\left(f\circ \left(s_{j_\ell}\circ\psi_\ell\right)\right)\mathcal{R}f_{k_\ell}\] because $\mathcal{R}$ is an equivalence relation. Therefore, we have constructed a covering $\{f_{k_j}:U_{k_j}\rightarrowtail Y\}_{j=1}^{m}$ for $Y$ such that for each $j=1,\ldots,m$, $f_{k_j}$ is a monomorphism and $f$ admits a $\mathcal{R}$-$f_{k_j}$-section. 

This leads us to conclude that \[\text{$\mathcal{R}$-msec}(f)\leq m=\text{$\mathcal{R}$-sec}(f).\]
\end{proof}

We will apply Proposition~\ref{prop:mono-sec-coincides} in the following examples to illustrate its implications.

\begin{example}[Sectional number in (Top, Open)]\label{exam:sectional-number-Top} Consider the open quasi Grothendieck topology on $\text{Top}$ as presented in Example~\ref{exam:open-cover-space}. Recall from Example~\ref{covering-top-usual-open-covering} that,  given a space $W$, a collection of inclusions $\{U_\lambda\hookrightarrow W\}_{\lambda\in\Lambda}$ constitutes a covering for $W$ whenever $\{U_\lambda\}_{\lambda\in\Lambda}$ consists of open subsets of $W$ such that   $W=\bigcup_{\lambda\in\Lambda} U_\lambda$. 

Let $f:X\to Y$ be a map and $\mathcal{R}$ be an equivalence relation over $\text{Mor}_{\text{Top}}(W,W^\prime)$ for each pair of spaces $W$ and $W^\prime$, which preserves composition. By Proposition~\ref{prop:mono-sec-coincides}, the $\mathcal{R}$-sectional number of $f$ coincides with the $\mathcal{R}$-mono sectional number of $f$. 

Moreover, the following statements hold: 
\begin{enumerate}
        \item[(1)] Our sectional number of $f$ coincides with the usual sectional number of $f$, denoted $\text{sec}_u(f)$. 
        \item[(2)] Let $\mathcal{R}$ be the homotopy relation $\simeq$ between maps. In this case, the $\mathcal{R}$-sectional number of $f$ coincides with the sectional category of $f$, denoted $\text{secat}(f)$. 
    \end{enumerate} From the above, we see that our sectional number theory recovers the usual sectional theory. Thus, we can conclude that this sectional number serves as a generalization--in a categorical setting (that is, an unification or categorification)--of the usual sectional number and the sectional category.  
\end{example}

The following remark highlights that our sectional number provides a family of new numerical invariants. 

\begin{remark}\label{rem:sec-induce-functor} 
    The $\mathcal{R}$-sectional number of $f$ provides new numerical invariants. For instance, consider the functor $\mathcal{F}:\text{Top}\to\text{Grp}$ (which can be covariant or contravariant). We define the equivalence relation $\mathcal{R}_{\mathcal{F}}$ by setting  $f\mathcal{R}_{\mathcal{F}} g$ if and only if $\mathcal{F}(f)=\mathcal{F}(g)$ for any maps $f,g:X\to Y$. Note that $\mathcal{R}_{\mathcal{F}}$ preserves composition. Therefore, we can define the sectional number $\mathcal{R}_{\mathcal{F}}\text{-sec}(-)$ in $\text{Top}$ with respect to any topology.  
\end{remark}

Computing the sectional number presents a significant challenge. For instance, consider the following example.

\begin{example}[Sectional number in Set]\label{exam:sectional-number-Set} 
We will work in the category Set of sets, equipped with the quasi Grothendieck topology presented in Example~\ref{exam:cover-set}. According to this topology, a collection $\{U_\lambda\hookrightarrow X\}_{\lambda\in\Lambda}$ of inclusions is a covering for a set $X$ if and only if $X=\bigcup_{\lambda\in\Lambda} U_\lambda$ (see Example~\ref{exam:covering-subsets}). For any function $f$, we have that $\mathrm{sec}(f)<\infty$ if and only if $f$ is surjective if and only if $\mathrm{sec}(f)=1$. For instance, the following statements hold:
\begin{enumerate}
\item[(1)] Given a set $X\subset\mathbb{N}$, consider the set $F_{<}(X)=\{(x,y)\in X\times X:~x<y\}$ and the projection $\pi_1^X:F_{<}(X)\to X$ defined by $\pi_1(x,y)=x$.

Note that $\mathrm{sec}(\pi_1^X)<\infty$ if and only if $\pi_1^X$ is surjective, which is equivalent to $\mathrm{sec}(\pi_1^X)=1$, and this occurs if and only if $X$ is infinite. For example, we have  $\mathrm{sec}(\pi_1^P)=1$ in the case that $P$ is the set of prime numbers. 

Furthermore, let $P^{\text{twin}}=\{p:~\text{$p$ is prime and $p+2$ is prime}\}$. Then, $\mathrm{sec}(\pi_1^{P^{\text{twin}}})=1$ if and only if the twin prime conjecture holds, which posits that there are infinitely many twin primes. 
    \item[(2)] Let $P_{> 2}=\{p:~ \text{$p$ is a prime greater than $2$}\}$ and \[\mathbb{N}_{> 2}^{\text{even}}=\{n:~\text{$n$ is an even natural number greater than $2$}\}.\] Consider the map $\varphi:P_{> 2}\times P_{> 2}\to \mathbb{N}_{> 4}^{\text{even}}$ defined by $\varphi(p,q)=p+p$. We have that $\mathrm{sec}(\varphi)<\infty$ if and only if $\varphi$ is surjective, which is equivalent to $\mathrm{sec}(\varphi)=1$. This condition holds if and only if the Goldbach's conjecture is true. Goldbach's conjecture posits that every even natural number greater than 2 can be expressed as the sum of two prime numbers.  
\end{enumerate}
\end{example}

Recall that in $\text{Top}$, we have the following quasi Grothendieck topologies: the open topology (denoted as \textit{Open}), the \'{e}tale  topology (denoted as \text{\'{E}tale}), and the image 
 topology (denotes as \text{Image}). In the next example, we will  compare the sectional numbers with respect to these different  topologies. 

\begin{example}[Sectional number in Top]\label{exam:sectional-number-top} 
  In the category $\text{Top}$, let $f:X\to Y$ be a map, and let $\mathcal{R}$ be a relation over $\text{Mor}_{\text{Top}}(W,W^\prime)$ for each pair of spaces $W$ and $W^\prime$. Recall that we have the inclusions $\text{Open}\subset \text{\'{E}tale}\subset \text{Image}$. By Remark~\ref{rem:two-topologies}, it follows that: \[\text{Image-$\mathcal{R}$-sec$(f)$}\leq \text{\'{E}tale-$\mathcal{R}$-sec$(f)$}\leq \text{Open-$\mathcal{R}$-sec$(f)$}.\] In particular, we obtain the inequalities: \[\text{Image-sec$(f)$}\leq \text{\'{E}tale-sec$(f)$}\leq \text{sec$_{\text{u}}(f)$}\] and \[\text{Image-$\simeq$-sec$(f)$}\leq \text{\'{E}tale-$\simeq$-sec$(f)$}\leq \text{secat$(f)$}.\] 
  
  Additionally, note that:
  \begin{enumerate}
      \item[(1)] $\text{Image-sec$(f)$}=1$ for any surjective map $f$. Furthermore, if $\text{Image-sec$(f)$}<\infty$, then $f$ is surjective, and thus $\text{Image-sec$(f)$}=1$.  
      \item[(2)] $\text{\'{E}tale-sec$(f)$}=1$ for any locally homeomorphism $f$. Furthermore, if $\text{\'{E}tale-sec$(f)$}<\infty$ then $f$ is surjective. 
      \item[(3)] By the Homotopy Lifting Property, observe that $\text{secat$(f)$}=\text{sec$_{\text{u}}(f)$}$ whenever $f$ is a fibration. Likewise, $\text{\'{E}tale-$\simeq$-sec$(f)$}=\text{\'{E}tale-sec$(f)$}$ and $ \text{Image-$\simeq$-sec$(f)$}=\text{Image-sec$(f)$}$ whenever $f$ is a fibration.   \end{enumerate} In the case that $f$ is the quotient map $q:X\to X/G$, with $G$ acting on a space $X$, we have that $\text{Image-sec$(q)$}=1$ (because $q$ is surjective). Moreover, if $X$ is Hausdorff and $G$ is finite, acting freely on $X$, then $\text{\'{E}tale-sec$(q)$}=1$ (since $q$ is a covering map and thus a locally homeomorphism). In addition,  sec$_{\text{u}}(f)$ is related to the Borsuk-Ulam property (see \cite{zapata-daciberg2023}, \cite{zapata-daciberg-arxiv}, \cite{zapata-daciberg-param}).
\end{example}

Example~\ref{exam:sectional-number-top} motivates the following example.

\begin{example}[Usual and \'{e}tale topological complexity]\label{exam:topological-complexity} 
 Let $X$ be a path-connected space. Consider the fibration $e_2^X:X^{[0,1]}\to X\times X$ given by $e_2^X(\gamma)=(\gamma(0),\gamma(1))$. From \cite{farber2003}, the \textit{topological complexity} of $X$ is defined as \[\text{TC}(X)=\text{sec$_{\text{u}}(e_2^X)$}.\]  We introduce the \textit{\'{e}tale topological complexity} of $X$ given by \[\text{TC}_{\text{\'{e}tale}}(X)=\text{\'{E}tale-sec$(e_2^X)$}.\] Note that $\text{Image-sec$(e_2^X)$}=1$ because $e_2^X$ is surjective (this follows from the path-connectedness of $X$). 
 
 On the other hand, from Example~\ref{exam:sectional-number-top}, we have:
 \[\text{TC}_{\text{\'{e}tale}}(X)\leq \text{TC}(X).\] In particular, $\text{TC}_{\text{\'{e}tale}}(X)= \text{TC}(X)=1$ whenever $X$ is contractible. 
 
 Moreover, we note that $\text{TC}(S^1)=2$ and $\text{TC}_{\text{\'{e}tale}}(S^1)=1$ (here we use the universal covering $\mathbb{R}\to S^1, r\mapsto e^{2\pi i r}$, together with the map $\sigma:\mathbb{R}\times\mathbb{R}\to \left(S^1\right)^{[0,1]}$ given by $\sigma(r,r')(t)=e^{2\pi i ((1-t)r+tr')}$). 
\end{example}

Now, in the next example, we will explore the sectional number in the category of groups. In particular, this new numerical invariant recovers the notion of covering number of groups, as discussed in \cite[p. 491]{haber1959} and \cite[p. 44]{cohn1994}. 

\begin{example}[Sectional number in the category of groups]\label{sec-homomorphism-groups}
Recall that $\text{Grp}$ denotes the category of groups and group homomorphisms.  Let $h:H\to G$ be a group homomorphism, and $\mathcal{R}$ be an equivalence relation over $\text{Mor}_{\text{Grp}}(G,G^\prime)$  for each pair of groups $G,G^\prime$, preserving composition.
\begin{enumerate}
    \item[(i)]  Consider the quasi Grothendieck topology on $\text{Grp}$ presented in Example~\ref{exam:subgroup-cover}(I). From Example~\ref{group-category-covers}(1)(i), a collection of inclusions $\{G_\lambda\hookrightarrow G\}_{\lambda\in\Lambda}$ is a covering for a group $G$ whenever $\{G_\lambda\}_{\lambda\in\Lambda}$ consists of subgroups of $G$ such that $G=\bigcup_{\lambda\in\Lambda} G_\lambda$. 
    
    By Proposition~\ref{prop:mono-sec-coincides}, the $\mathcal{R}$-sectional number of $h$ coincides with the $\mathcal{R}$-mono sectional number of $h$. Thus, we can define a notion of sectional number of a group homomorphism, denoted as $\mathrm{sec}(h)$. Similarly, we can define a sectional number for a group homomorphism based on proper subgroups, denoted as  $\mathrm{sec}'(h)$. 

    \item[(ii)]  Consider the topology on $\text{Grp}$ presented in Example~\ref{exam:subgroup-cover}(III). From Example~\ref{group-category-covers}(1)(iii), a collection of inclusions $\{C_\lambda\hookrightarrow G\}_{\lambda\in\Lambda}$ forms a covering for a group $G$ whenever $\{C_\lambda\}_{\lambda\in\Lambda}$ consists of cyclic subgroups of $G$ such that $G=\bigcup_{\lambda\in\Lambda} C_\lambda$. 
    
     By Proposition~\ref{prop:mono-sec-coincides}, the $\mathcal{R}$-sectional number of $h$ coincides with the $\mathcal{R}$-mono sectional number of $h$. Thus, we can define a notion of sectional number of a group homomorphism based on  cyclic subgroups, denoted as $\mathrm{sec}_{\text{cyclic}}(h)$. Similarly, we can define a sectional number for a group homomorphism based on proper cyclic subgroups, denoted as  $\mathrm{sec}'_{\text{cyclic}}(h)$.
\end{enumerate}
Moreover, the following statements hold:
\begin{enumerate}
        \item[(1)] It follows that if $\mathrm{sec}(h)<\infty$, then $h$ is surjective. Moreover, $\mathrm{sec}(h)=1$ if and only if there exists a group homomorphism  $s:G\to H$ such that $h\circ s=1_G$. In contrast, the inequality $\mathrm{sec}'(h)\geq 3$ always holds (see the next item,  Item (3)). 
        \item[(2)] The inequality $\mathrm{sec}(h)\leq \min\{\mathrm{sec}'(h),\mathrm{sec}_{\text{cyclic}}(h)\}\leq \mathrm{sec}'_{\text{cyclic}}(h)$ always holds. Furthermore, if $\mathrm{sec}(h)\geq 2$, then $\mathrm{sec}(h)=\mathrm{sec}'(h)$ and $\mathrm{sec}_{\text{cyclic}}(h)=\mathrm{sec}'_{\text{cyclic}}(h)$. 
        \item[(3)] From~\cite[p. 491]{haber1959} and \cite[p. 44]{cohn1994}, the minimal number of proper subgroups needed to cover $G$ is called the \textit{covering number} of $G$, denoted by $\sigma(G)$. It is easy to see that $\sigma(G)\geq 3$ for any group $G$. Furthermore, if $h:H\to G$ is surjective, then $\sigma(H)\leq \sigma(G)$. By the definition of sec$'$, we have $\sigma(G)\leq\mathrm{sec}'(h)$. Additionally, if $\mathrm{sec}(h)=1$, then $\mathrm{sec}'(h)=\sigma(G)$. In the case that $h:H\to G$ is an isomorphism, we find that $\mathrm{sec}'(h)=\sigma(G)=\sigma(H)$. Thus, our $\mathrm{sec}'(-)$ serves as a generalization of the covering number. 
        \item[(4)] Motivated by Item (3) above, the minimal number of proper cyclic subgroups needed to cover $G$ is called the \textit{cyclic covering number} of $G$, denoted by $\sigma_{\text{cyclic}}(G)$. It is important to note that $\sigma(G)\leq \sigma(G)_{\text{cyclic}}$ for any group $G$. Furthermore, by definition of $\mathrm{sec}'_{\text{cyclic}}$, we have $\mathrm{sec}'_{\text{cyclic}}(h)=\sigma_{\text{cyclic}}(G)$ for any surjective homomorphism $h:H\to G$. 
    \end{enumerate}
 For instance, consider any homomorphism $h:H\to \mathbb{Q}$, where the codomain is the group of rational numbers $(\mathbb{Q},+)$. In this case, we have $\mathrm{sec}'(h)=\infty$. To see why, note that $\sigma(\mathbb{Q})=\infty$. If we suppose that $\mathbb{Q}=Q_1\cup\cdots\cup Q_n$ for some proper subgroups $Q_i$ of $(\mathbb{Q},+)$, then for each $i=1,\ldots,n$, there exists a rational number $q_i=\dfrac{a_i}{b_i}$ such that $q_i\not\in Q_i$. Since each $Q_i$ is a $\mathbb{Z}$-module, we have $\dfrac{1}{b_i}\not\in Q_i$ for all $i=1,\ldots,n$. Consequently, $\dfrac{1}{b_1\cdots b_n}\not\in Q_1\cup\cdots\cup Q_n$ (here we use that $\dfrac{1}{b_i}=b_1\cdots b_{i-1}b_{i+1}\cdots b_n\cdot\dfrac{1}{b_1\cdots b_n}$) which leads to a contradiction.    
\end{example}

Now, in the next example, we will explore the sectional number in the category of rings. This new numerical invariant, in particular,  recovers the notion of covering number of rings. The covering number of rings was studied for several authors (see \cite{swartz-werner2024} and the references of therein). 

\begin{example}[Sectional number in the category of Rings]\label{sec-homomorphism-rings} 
Recall that $\text{Ring}$ denotes the category of rings and ring homomorphisms. Let $h:R\to S$ be a ring  homomorphism, and let $\mathcal{R}$ be an equivalence relation over $\text{Mor}_{\text{Ring}}(R,S)$ for each pair of rings $R,S$, preserving composition.
\begin{enumerate}
    \item[(i)]  Consider the quasi Grothendieck topology on $\text{Ring}$ presented in Example~\ref{exam:subring-cover}. From Example~\ref{ring-category-covers}(2), given a ring $R$, a collection of inclusions $\{R_\lambda\hookrightarrow R\}_{\lambda\in\Lambda}$ is a covering for $R$ whenever $\{R_\lambda\}_{\lambda\in\Lambda}$ is a collection of subrings of $R$ such that $R=\bigcup_{\lambda\in\Lambda} R_\lambda$. 
    
    By Proposition~\ref{prop:mono-sec-coincides}, the $\mathcal{R}$-sectional number of $h$ coincides with the $\mathcal{R}$-mono sectional number of $h$. Thus, we obtain a notion of sectional number for a ring homomorphism, denoted $\mathrm{sec}(h)$. 
    \item[(ii)]  Consider the covering sieves in $\text{Ring}$ formed by proper subrings. Specifically, given a ring $R$, a collection of inclusions $\{R_\lambda\hookrightarrow R\}_{\lambda\in\Lambda}$ is called a \textit{covering by proper subrings} for $R$ whenever $\{R_\lambda\}_{\lambda\in\Lambda}$ consists of proper subrings of $R$ such that  $R=\bigcup_{\lambda\in\Lambda} R_\lambda$.  
    
    By Proposition~\ref{prop:mono-sec-coincides}, the $\mathcal{R}$-sectional number of $h$ coincides with the $\mathcal{R}$-mono sectional number of $h$. Thus, we obtain a new version of the sectional number for a ring homomorphism, denoted  $\mathrm{sec}'(h)$. 
\end{enumerate}
Moreover, the following statements are true:
\begin{enumerate}
        \item[(1)] If $\mathrm{sec}(h)<\infty$, then $h$ is surjective. Moreover, $\mathrm{sec}(h)=1$ if and only if there exists a ring homomorphism  $s:S\to R$ such that $h\circ s=1_S$. In contrast, the inequality $\mathrm{sec}'(h)\geq 3$ always holds (see the next Item (3)). 
        \item[(2)] The inequality $\mathrm{sec}(h)\leq \mathrm{sec}'(h)$ always holds. Furthermore, if $\mathrm{sec}(h)\geq 2$, then $\mathrm{sec}(h)=\mathrm{sec}'(h)$. 
        \item[(3)] From~\cite{swartz-werner2024}, the minimal number of proper subrings needed to cover $R$ is called the \textit{covering number} of $R$, denoted by $\sigma(R)$. It is easy to see that $\sigma(R)\geq\sigma(R,+)$, and thus $\sigma(R)\geq 3$ for any ring $R$. Furthermore, $\sigma(R)\leq \sigma(S)$ whenever $h$ is surjective. By the  definition of sec$'$, we have  $\sigma(S)\leq\mathrm{sec}'(h)$. Additionally, if $\mathrm{sec}(h)=1$, then $\mathrm{sec}'(h)=\sigma(S)$. Hence, in the case that $h:R\to S$ is a ring isomorphism, we obtain  $\mathrm{sec}'(h)=\sigma(R)=\sigma(S)$. Thus, our $\mathrm{sec}'(-)$ serves as a generalization of the covering number for rings. 
    \end{enumerate}
For instance, from the last part of Example~\ref{sec-homomorphism-groups}, it follows that $\mathrm{sec}'(h)=\infty$ for any ring homomorphism $h:R\to\mathbb{Q}$.    
\end{example}

In the following example, we will explore the sectional number in the category of modules.  

\begin{example}[Sectional number in the category of modules]\label{sec-homomorphism-modules} 
Recall that $R\text{-Mod}$ denotes the category of $R$-modules and $R$-homomorphisms.  Let $h:M\to N$ be an $R$-homomorphism, and let $\mathcal{R}$ be an equivalence relation over $\text{Mor}_{R\text{-Mod}}(M,N)$ for each pair of modules $M$ and $N$, preserving composition.
\begin{enumerate}
    \item[(i)]  Consider the quasi Grothendieck topology on $R\text{-Mod}$ presented in Example~\ref{exam:submodule-cover}. From Example~\ref{module-category-covers}(3)(i), given a $R$-module $M$, a collection of inclusions $\{M_\lambda\hookrightarrow M\}_{\lambda\in\Lambda}$ forms a covering for $M$ if $\{M_\lambda\}_{\lambda\in\Lambda}$ consists of submodules of $M$ and $M=\bigcup_{\lambda\in\Lambda} M_\lambda$. 
    
    By Proposition~\ref{prop:mono-sec-coincides}, the $\mathcal{R}$-sectional number of $h$ coincides with the $\mathcal{R}$-mono sectional number of $h$. For instance, we obtain a notion of sectional number for an $R$-homomorphism, denoted $\mathrm{sec}(h)$. Similarly, we obtain a notion of sectional number for an $R$-homomorphism based on proper submodules, denoted  $\mathrm{sec}'(h)$. 
    
    \item[(ii)]  Consider the topology on $R\text{-Mod}$ presented in Example~\ref{exam:projectivesubmodules}. From   Example~\ref{module-category-covers}(3)(ii), given a $R$-module $M$, a collection of inclusions $\{M_\lambda\hookrightarrow M\}_{\lambda\in\Lambda}$ forms a covering for $M$ whenever $\{M_\lambda\}_{\lambda\in\Lambda}$ consists of projective submodules of $M$ and $M=\bigcup_{\lambda\in\Lambda} M_\lambda$. 
    
    By Proposition~\ref{prop:mono-sec-coincides}, the $\mathcal{R}$-sectional number of $h$ coincides with the $\mathcal{R}$-mono sectional number of $h$. For instance, we obtain a new notion of sectional number for an $R$-homomorphism based on projective submodules, which we will denote $\mathrm{sec}_{proj}(h)$. Similarly,  we obtain a new notion of sectional number for an $R$-homomorphism based on proper projective submodules, denoted  $\mathrm{sec}'_{proj}(h)$. 
\end{enumerate}
Moreover, the following statements hold:
\begin{enumerate}
        \item[(1)] If $\mathrm{sec}(h)<\infty$, then $h$ is surjective. Moreover, $\mathrm{sec}(h)=1$ if and only if there exists an $R$-homomorphism  $s:N\to M$ such that $h\circ s=1_N$. In contrast, the inequality $\mathrm{sec}'(h)\geq 3$ always holds (see the next Item (3)). 
        \item[(2)] The inequality $\mathrm{sec}(h)\leq \min\{\mathrm{sec}'(h),\mathrm{sec}_{proj}(h)\}\leq \mathrm{sec}'_{proj}(h)$ always holds. In particular, we have $\mathrm{sec}_{proj}(h)=\mathrm{sec}(h)=1$ whenever $h:M\to N$ is surjective and $N$ is projective. Furthermore, if $\mathrm{sec}(h)\geq 2$, then $\mathrm{sec}(h)=\mathrm{sec}'(h)$ and $\mathrm{sec}_{proj}(h)=\mathrm{sec}'_{proj}(h)$. 
        \item[(3)] Given a $R$-module $M$, the minimal number of proper submodules needed to cover $M$ is called the \textit{covering number} of $M$, denoted by $\sigma(M)$. This number was studied in \cite{khare-tikaradze2022} in the case where $R$ is a commutative ring. It is easy to see that 
$\sigma(M)\geq\sigma(M,+)$, and thus $\sigma(M)\geq 3$. Furthermore, $\sigma(M)\leq \sigma(N)$ whenever $h:M\to N$ is surjective. By the definition of sec$'$, we have $\sigma(N)\leq\mathrm{sec}'(h)$. In addition, if $\mathrm{sec}(h)=1$, then $\mathrm{sec}'(h)=\sigma(N)$. Hence, in the case that $h:M\to N$ is an $R$-isomorphism, we have  $\mathrm{sec}'(h)=\sigma(M)=\sigma(N)$. Thus, $\mathrm{sec}'(-)$ serves as a generalization of the covering number for modules.
    
         \item[(4)] Given a $R$-module $M$, the minimal number of proper projective submodules needed to cover $M$ is called the \textit{projective covering number} of $M$, denoted by $\sigma_{proj}(M)$. It is easy to see that $\sigma_{proj}(M)\geq \sigma(M)$. Furthermore, we emphasise that the equality $\sigma_{proj}(N)=\mathrm{sec}'_{proj}(h)$ holds whenever $h$ is surjective (here, we use the fact that any surjective $R$-homomomorphism $M\twoheadrightarrow N$ admits a local section on each projective submodule $P\subset N$).  
    \end{enumerate}
\end{example}

Next, we will explore the sectional number in the category given by  a group.

\begin{example}[Sectional number in a group]\label{exam:sectional-in-group} 
    Let $G$ be a group. From Example~\ref{exam:category-equal-group}, recall that any sieve $S$ on the unique object $\ast$ coincides with $G_{\mid \ast}$. Therefore, the discrete and trivial Grothendieck topology on $G$ are the same, and thus for any $g\in G$, $\mathcal{R}\text{-sec}(g)=1$ for any reflexive relation $\mathcal{R}$. 
    
    On the other hand, consider the relation $\mathcal{R}'$ defined as follows: for $a,b\in G$, $a\mathcal{R}'b$ if and only if $aba^{-1}b^{-1}ab=1$. It follows that $\mathcal{R}'\text{-sec}(g)=1$ if and only if there is an element $h\in G$ such that the equation \begin{align*}
       gxhx^{-1}g^{-1}h^{-1}gxh&=1. 
    \end{align*} admits a solution. Furthermore, $\mathcal{R}'\text{-sec}(g)<\infty$ if and only if $\mathcal{R}'\text{-sec}(g)=1$.  
\end{example}

Now, we will explore the sectional number in the category of graphs. We note that a previous version of the sectional number for a graph homomorphism appeared in the paper \cite{zapata2023}. 

\begin{example}[Sectional number in Graph]\label{exam:sectional-number-graph}  Consider the quasi Grothendieck topology on $\text{Graph}$ presented in Example~\ref{exam:subgraph-cover}. From Example~\ref{covering-graph-covering}, recall that, given a graph $G$, a collection of inclusions $\{G_\lambda\hookrightarrow G\}_{\lambda\in\Lambda}$ is a covering for $G$ whenever $\{G_\lambda\}_{\lambda\in\Lambda}$ consists of subgraphs of $G$ and  $G=\bigcup_{\lambda\in\Lambda} G_\lambda$. 

Let $f:G\to H$ be a graph homomorphism and $\mathcal{R}$ be an equivalence relation on $\text{Mor}_{\text{Graph}}(G,H)$ for each pair of graphs $G$ and $H$, preserving composition. By Proposition~\ref{prop:mono-sec-coincides}, the $\mathcal{R}$-sectional number of $f$ coincides with the $\mathcal{R}$-mono sectional number of $f$. 

For instance, the sectional number of $f$, denoted $\mathrm{sec}(f)$, coincides with the least interger $n$ such that there exist subgraphs $H_1,\ldots,H_n$ of $H$ satisfying $H=H_1\cup\cdots\cup H_n$ and such that for each $H_j$, there exists a section $s_j:H_j\to G$ of $f$. Thus, we recover the notion of sectional number for a graph homomorphism introduced in \cite{zapata2023}. 
\end{example}

\subsection{Coverings by subobjects} A \textit{subobject} of an object $Y\in\mathcal{C}$ is an equivalence class  $[h:U\rightarrowtail Y]$ of monomorphisms with target $Y$ under the equivalence relation $\sim$ defined as follows: for monomorphisms $u:T\rightarrowtail Y$ and $v:S\rightarrowtail Y$, \[
u\sim v \text{ if and only if there exists an isomorphism } \varphi:T\to S \text{ such that } u=v\circ \varphi.\]

The equivalence class of the identity morphism $[1_Y]$ is a subobject of $Y$. We will denote this class by the same letter $Y$, that is, $Y=[1_Y]$, and we say that $Y$ is a subobject of itself.  

\begin{remark}\label{rem:relation-sieve}
\noindent\begin{enumerate}
    \item[(1)] Let $h\colon E\rightarrowtail Y$ be a monomorphism. Note that $[h]=[1_Y]$, which means $h\sim 1_Y$ if and only if $h$ is an isomorphism.
    \item[(2)] For monomorphisms $u:T\rightarrowtail Y$ and $v:S\rightarrowtail Y$, we have that $u\sim v$ if and only if $\mathcal{C}^{(0)}_{|Y}\{u:T\rightarrowtail Y\}=\mathcal{C}^{(0)}_{|Y}\{v:S\rightarrowtail Y\}$. To see why this is true, the necessary condition follows directly. For the sufficient condition, if $\mathcal{C}^{(0)}_{|Y}\{u:T\rightarrowtail Y\}=\mathcal{C}^{(0)}_{|Y}\{v:S\rightarrowtail Y\}$, then there exist morphisms $\varphi:T\to S$ and $\psi:S\to T$ such that $u=v\circ \varphi$ and $v=u\circ\psi$. Noting that $u\circ 1_T=u=u\circ \left(\psi\circ\varphi\right)$, we conclude that  $\psi\circ\varphi=1_T$ (here, we use the fact that $u$ is a monomorphism). Similarly, we obtain $\varphi\circ\psi=1_S$. Thus, $\varphi$ is an isomorphism, with $\psi$ as its inverse, which implies that $u=v\circ \varphi$. Hence, $u\sim v$. 
    \end{enumerate}  
\end{remark}

Let $f:X\to Y$ be a morphism, and let $u:T\rightarrowtail Y$ and $v:S\rightarrowtail Y$ be monomorphisms such that $u\sim v$. In addition, suppose that $\mathcal{R}$ is an equivalence relation over $\text{Mor}_{\mathcal{C}}(c,c^\prime)$ for each $c,c^\prime\in \mathcal{C}$ that preserves composition; that is, if $f\mathcal{R}h$ in $\text{Mor}_{\mathcal{C}}(c,c^\prime)$ and $f'\mathcal{R}h'$ in $\text{Mor}_{\mathcal{C}}(c'',c)$, then $(f\circ f')\mathcal{R}(h\circ h')$. 

\medskip Note that $f$ admits a $\mathcal{R}$-$u$-section if and only if it  admits a $\mathcal{R}$-$v$-section. This follows from the condition that $\mathcal{R}$ preserves composition, together with the fact that $\mathcal{R}$ is an equivalence relation. In particular, if there is a $\mathcal{R}$-section to $f$ over the domain of a monomorphism in a class, then there is also a $\mathcal{R}$-section to $f$ over the domains of all other monomorphisms in that class. We say that such a class \textit{admits a $\mathcal{R}$-section} to $f$. In the case that $\mathcal{R}$ is the trivial relation, we refer to a \aspas{$\mathcal{R}$-section} simply as a \aspas{section}. 

\medskip Let $\mathcal{C}$ be a category with covers. Consider a collection of subobjects $\{U_i\}_{i\in I}$ of $Y$. If a collection of monomorphisms $\{(F_i\stackrel{f_i}{\rightarrowtail} Y)\in U_i\}_{i\in I}$ forms a covering for $Y$, then any other collection of monomorphisms $\{(G_i\stackrel{g_i^\prime}{\rightarrowtail} Y)\in U_i\}_{i\in I}$ is also a covering for $Y$. We say that such a collection of subobjects $\{U_i\}_{i\in I}$ is a \textit{covering} for $Y$. Indeed, from Remark~\ref{rem:relation-sieve}(2), we have \[\mathcal{C}_{\mid Y}^{(0)}\{f_i:F_i\rightarrowtail Y\}_{i\in I}=\mathcal{C}_{\mid Y}^{(0)}\{g_i:G_i\rightarrowtail Y\}_{i\in I}.\]

We present direct examples of coverings. 

 \begin{example}
\noindent\begin{enumerate}
    \item[(1)] Example~\ref{exam:identity-covering} states that $\{Y\}$ is a covering for $Y$. Recall that $Y=[1_Y]$ is a subobject of itself.  
    \item[(2)] In a category $\mathcal{C}$ equipped with the discrete Grothendieck topology, any collection $\{U_\lambda\}_{\lambda\in\Lambda}$ of subobjects of $Y\in\mathcal{C}$ constitutes a covering for $Y$ (see Example~\ref{exam:discrete-gro}).  
\end{enumerate}  
 \end{example}

The following statement indicates that the mono sectional number can be expressed in terms of coverings by subobjects. 

\begin{proposition}\label{rem:via-subobjects} 
  Let $\mathcal{C}$ be a category with covers, and suppose that $\mathcal{R}$ is an equivalence relation on $\text{Mor}_{\mathcal{C}}(c,c^\prime)$ for each $c,c^\prime\in \mathcal{C}$ that preserves composition. 
  
  For a  morphism $f:X\to Y$ in $\mathcal{C}$, the $\mathcal{R}$-mono sectional number $\mathcal{R}$-msec$(f)$ coincides with the least integer $m$ such that the target $Y$ can be covered by $m$ subobjects $U_i$, where each $U_i$ admits a $\mathcal{R}$-section to $f$. Such a covering $\{U_i\}_{i=1}^m$ is called \textit{categorical by subobjects}.
\end{proposition}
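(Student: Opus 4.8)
The statement is essentially a reformulation of the definition of $\mathcal{R}$-msec once one observes that the relevant notion of "covering by $m$ subobjects, each admitting an $\mathcal{R}$-section" is well-defined (independent of the chosen representatives), which is exactly what the preceding discussion established. So the plan is to prove the two inequalities between $\mathcal{R}$-msec$(f)$ and the quantity
$$
m_0 := \min\{m : Y \text{ is covered by }m\text{ subobjects }U_1,\dots,U_m,\text{ each admitting an }\mathcal{R}\text{-section to }f\},
$$
using the established facts that (i) a covering by subobjects is well-defined (any choice of monomorphism representatives in each class $U_i$ yields the same generated sieve, by Remark~\ref{rem:relation-sieve}(2)), and (ii) "admits an $\mathcal{R}$-section to $f$" is a property of the subobject class, not of a representative (this is the paragraph just before the statement, using that $\mathcal{R}$ is an equivalence relation preserving composition).

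\textbf{First inequality: $m_0 \leq \mathcal{R}\text{-msec}(f)$.} Suppose $\mathcal{R}\text{-msec}(f) = m$ and pick an $\mathcal{R}$-mono categorical covering $\{g_j : Y_j \rightarrowtail Y\}_{j=1}^m$; so this is a covering for $Y$, each $g_j$ is a monomorphism, and each $g_j$ admits an $\mathcal{R}$-$g_j$-section of $f$. Set $U_j := [g_j : Y_j \rightarrowtail Y]$, the subobject represented by $g_j$. Then $\{U_j\}_{j=1}^m$ is a collection of $m$ subobjects of $Y$, it is a covering for $Y$ (because the specific representatives $\{g_j\}$ form a covering, and being a covering by subobjects does not depend on the choice of representatives), and each $U_j$ admits an $\mathcal{R}$-section to $f$ (witnessed by the given section over $g_j$). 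Hence $m_0 \leq m = \mathcal{R}\text{-msec}(f)$.

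\textbf{Second inequality: $\mathcal{R}\text{-msec}(f) \leq m_0$.} Suppose $Y$ is covered by subobjects $U_1,\dots,U_{m_0}$, each admitting an $\mathcal{R}$-section to $f$. For each $i$ choose a representative monomorphism $g_i : Y_i \rightarrowtail Y$ with $[g_i] = U_i$. Since $\{U_i\}$ is a covering by subobjects, the chosen representatives $\{g_i : Y_i \rightarrowtail Y\}_{i=1}^{m_0}$ form a covering for $Y$, and each $g_i$ is by construction a monomorphism. Since $U_i$ admits an $\mathcal{R}$-section to $f$, there is a morphism $s_i : Y_i \to X$ with $(f \circ s_i)\,\mathcal{R}\,g_i$, i.e.\ $s_i$ is an $\mathcal{R}$-$g_i$-section of $f$. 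Thus $\{g_i : Y_i \rightarrowtail Y\}_{i=1}^{m_0}$ is an $\mathcal{R}$-mono categorical covering, whence $\mathcal{R}\text{-msec}(f) \leq m_0$. Combining the two inequalities gives $\mathcal{R}\text{-msec}(f) = m_0$, as claimed.

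There is no real obstacle here; the only thing requiring care is to cite correctly the two "well-definedness" facts — that a covering by subobjects is representative-independent (Remark~\ref{rem:relation-sieve}(2) and the surrounding paragraph) and that admitting an $\mathcal{R}$-section is a property of the class (the paragraph preceding the statement, where $\mathcal{R}$ being an equivalence relation preserving composition is used). Everything else is bookkeeping: passing between monomorphisms and the subobject classes they represent, in both directions.
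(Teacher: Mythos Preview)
Your proof is correct. The paper does not actually supply a proof for this proposition---it is stated as a direct consequence of the two well-definedness facts established in the preceding paragraphs (that coverings by subobjects and the property ``admits an $\mathcal{R}$-section'' are representative-independent)---and your two-inequality argument is exactly the unpacking of that implicit reasoning.
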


For instance, the mono sectional number msec$(f)$ coincides with the least integer $m$ such that the target $Y$ can be covered by $m$ subobjects $U_i$, with each $U_i$ admitting a section to $f$.

\subsection{Characterization by sieves} We have the following characterization of the mono-sectional and sectional numbers in terms of generated sieves, respectively.

\begin{proposition}[Generated sieves]\label{prop:characterisation-sieves}
    Let $\mathcal{C}$ be a category with covers and $f:X\to Y$ be a morphism in $\mathcal{C}$. Then
    \begin{enumerate}
        \item[(1)] $\text{msec}(f)$ coincides with the least integer $m$ such that there are monomorphisms $h_1\colon Y_1\rightarrowtail Y,\ldots,h_m\colon Y_m\rightarrowtail Y$ for which $\mathcal{C}^{(0)}_{| Y}\{h_j\colon Y_j\rightarrowtail Y\}_{j=1}^{m}$ is a covering sieve on $Y$ and $\mathcal{C}^{(0)}_{| Y}\{f,h_1,\ldots,h_m\}=\mathcal{C}^{(0)}_{| Y}\{f\}$. 
         \item[(2)] $\text{sec}(f)$ coincides with the least integer $m$ such that there are morphisms $h_1\colon Y_1\to Y,\ldots,h_m\colon Y_m\to Y$ such that $\mathcal{C}^{(0)}_{| Y}\{h_j\colon Y_j\to Y\}_{j=1}^{m}$ forms a covering sieve on $Y$ and $\mathcal{C}^{(0)}_{| Y}\{f,h_1,\ldots,h_m\}=\mathcal{C}^{(0)}_{| Y}\{f\}$. 
    \end{enumerate}
\end{proposition}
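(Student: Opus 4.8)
The strategy is to unwind both statements directly from Definition~\ref{defn:sectional-number} and Remark~\ref{rem:section-sieve}. Since (1) and (2) are proved by essentially the same argument, with (1) imposing the extra constraint that the morphisms are monomorphisms, I will write the argument once, and indicate where the monomorphism hypothesis enters (namely, nowhere essential — it just restricts the class of coverings and is carried along verbatim). So fix $f\colon X\to Y$ and let $m$ denote the least integer for which there exist morphisms $h_1\colon Y_1\to Y,\ldots,h_m\colon Y_m\to Y$ with $\mathcal{C}^{(0)}_{|Y}\{h_j\}_{j=1}^m$ a covering sieve on $Y$ and $\mathcal{C}^{(0)}_{|Y}\{f,h_1,\ldots,h_m\}=\mathcal{C}^{(0)}_{|Y}\{f\}$. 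I will show $m=\mathrm{sec}(f)$ by proving both inequalities.

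For the inequality $\mathrm{sec}(f)\le m$: suppose $h_1,\ldots,h_m$ realize the quantity $m$. The condition that $\{h_j\colon Y_j\to Y\}_{j=1}^m$ generates a covering sieve is exactly the statement that $\{h_j\colon Y_j\to Y\}_{j=1}^m$ is a covering for $Y$ in the sense of Definition~\ref{def:covering}. The condition $\mathcal{C}^{(0)}_{|Y}\{f,h_1,\ldots,h_m\}=\mathcal{C}^{(0)}_{|Y}\{f\}$ forces $\mathcal{C}^{(0)}_{|Y}\{h_j\colon Y_j\to Y\}\subset\mathcal{C}^{(0)}_{|Y}\{f\colon X\to Y\}$ for each $j$ (since the sieve generated by $\{f,h_1,\ldots,h_m\}$ contains each $\mathcal{C}^{(0)}_{|Y}\{h_j\}$), and hence, by Remark~\ref{rem:section-sieve}(2), $f$ admits an $h_j$-section for each $j$. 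Thus $\{h_j\}_{j=1}^m$ is a categorical covering for $Y$ and $\mathrm{sec}(f)\le m$. For the converse $m\le\mathrm{sec}(f)$: let $\mathrm{sec}(f)=\ell$ and take a categorical covering $\{g_j\colon Y_j\to Y\}_{j=1}^\ell$, so each $g_j$ has a $g_j$-section of $f$. By Remark~\ref{rem:section-sieve}(2) again, this means $\mathcal{C}^{(0)}_{|Y}\{g_j\}\subset\mathcal{C}^{(0)}_{|Y}\{f\}$ for each $j$; since also $\mathcal{C}^{(0)}_{|Y}\{f\}\subset\mathcal{C}^{(0)}_{|Y}\{f,g_1,\ldots,g_\ell\}$ trivially, and the reverse inclusion holds because each generator $f,g_1,\ldots,g_\ell$ lies in $\mathcal{C}^{(0)}_{|Y}\{f\}$, we get $\mathcal{C}^{(0)}_{|Y}\{f,g_1,\ldots,g_\ell\}=\mathcal{C}^{(0)}_{|Y}\{f\}$. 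Since $\{g_j\}_{j=1}^\ell$ is a covering, $\mathcal{C}^{(0)}_{|Y}\{g_j\}_{j=1}^\ell$ is a covering sieve, so $g_1,\ldots,g_\ell$ witness the defining condition for $m$; hence $m\le\ell=\mathrm{sec}(f)$.

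For part (1), I run the identical two inequalities but now insisting that all the morphisms involved be monomorphisms: in the $\mathrm{msec}(f)\le m$ direction the $h_j$ are monomorphisms by hypothesis, so $\{h_j\colon Y_j\rightarrowtail Y\}$ is a mono categorical covering; in the $m\le\mathrm{msec}(f)$ direction the realizing covering $\{g_j\colon Y_j\rightarrowtail Y\}$ from the definition of $\mathrm{msec}$ consists of monomorphisms, so these serve as the required $h_j$. The key subtlety — and the only place one must be slightly careful — is the handedness of Remark~\ref{rem:section-sieve}(2): \emph{existence of a $g$-section of $f$} is equivalent to \emph{$g$ factors through $f$}, which is $\mathcal{C}^{(0)}_{|Y}\{g\}\subset\mathcal{C}^{(0)}_{|Y}\{f\}$, not the reverse inclusion; I should state this explicitly to avoid the easy sign error. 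Beyond that, the proof is a pure bookkeeping translation between the covering-plus-section language of Definition~\ref{defn:sectional-number} and the generated-sieve language, so I anticipate no genuine obstacle; the "hard part," such as it is, is simply being scrupulous that the equality $\mathcal{C}^{(0)}_{|Y}\{f,h_1,\ldots,h_m\}=\mathcal{C}^{(0)}_{|Y}\{f\}$ is logically equivalent to the conjunction of the $m$ inclusions $\mathcal{C}^{(0)}_{|Y}\{h_j\}\subset\mathcal{C}^{(0)}_{|Y}\{f\}$, which follows because the sieve generated by a union of collections is the smallest sieve containing each, hence equals $\mathcal{C}^{(0)}_{|Y}\{f\}$ iff each generating morphism already belongs to $\mathcal{C}^{(0)}_{|Y}\{f\}$.
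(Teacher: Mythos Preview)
Your proof is correct and takes essentially the same approach as the paper. The paper's proof is a single sentence observing that $\mathcal{C}^{(0)}_{|Y}\{f,h_1,\ldots,h_m\}=\mathcal{C}^{(0)}_{|Y}\{f\}$ holds if and only if $f$ admits an $h_j$-section for each $j$; you unpack this equivalence via Remark~\ref{rem:section-sieve}(2) and then run the two inequalities explicitly, which is simply a more detailed version of the same argument.
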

\begin{proof}
Given morphisms $h_1\colon Y_1\to Y,\ldots,h_m\colon Y_m\to Y$, note that $\mathcal{C}^{(0)}_{| Y}\{f,h_1,\ldots,h_m\}=\mathcal{C}^{(0)}_{| Y}\{f\}$ if and only if for each $j=1,\ldots,m$, the morphism $f$ admits a $h_j$-section. 
\end{proof}

Note that $\mathcal{C}^{(0)}_{| Y}\{f\colon X\to Y\}$ is a sieve, but it is not necessarily a covering sieve. 

\begin{example} Let $\mathcal{C}$ be a category with a topology satisfying axiom (T1). 
    If $f:X\to Y$ admits a $1_Y$-section $Y\to X$, then $\text{msec}(f)=1$ and $\mathcal{C}^{(0)}_{| Y}\{f\colon X\to Y\}=\mathcal{C}_{| Y}$. In particular, $\mathcal{C}^{(0)}_{| Y}\{f\colon X\to Y\}$ is a covering sieve.
\end{example}

\begin{example}
  If $\mathcal{C}^{(0)}_{| Y}\{f\colon X\to Y\}$ is a covering sieve, then $\text{sec}(f)=1$. In addition, if $f$ is a monomorphism then $\text{msec}(f)=1$.   
\end{example}

\subsection{Invariance}

Let $\mathcal{C}$ be a category with covers, and suppose that $\mathcal{R}$ is an equivalence relation over $\text{Mor}_{\mathcal{C}}(c,c^\prime)$ for each $c,c^\prime\in \mathcal{C}$, preserving composition. Two morphisms $f:X\to Y$ and $f^\prime:X^\prime\to Y$ are said to be \textit{$\mathcal{R}$-fibrewise   
equivalent} (or $\mathcal{R}$-FE) if there are morphisms $\psi:X\to X'$, $\varphi:X'\to X$ such that $(f'\circ\psi)\mathcal{R}f$ and $(f\circ\varphi)\mathcal{R}f'$. This can be represented by the following commutative diagrams: 
\begin{eqnarray*}
\xymatrix{ X \ar[rr]^{\psi} \ar[rd]_{f} & & X^{\prime} \ar[ld]^{f^\prime} & \\
        &  Y & &} & \xymatrix{ X^{\prime} \ar[rd]_{f^\prime}  \ar[rr]^{\varphi}  & & X \ar[ld]^{f}& \\
        &  Y & &}
\end{eqnarray*} Such maps $\psi$ and $\varphi$ are called \textit{$\mathcal{R}$-fibrewise morphism}. 

\medskip The following statement establishes the $\mathcal{R}$-FE-invariance of $\text{$\mathcal{R}$-}\mathrm{sec}(-)$ and $\text{$\mathcal{R}$-}\mathrm{msec}(-)$. 

\begin{theorem}[Invariance]\label{r-fe-invariance}
  Let $\mathcal{C}$ be a category with covers, and suppose that $\mathcal{R}$ is an equivalence relation over $\text{Mor}_{\mathcal{C}}(c,c^\prime)$ for each $c,c^\prime\in \mathcal{C}$, preserving composition.  Given morphisms $f:X\to Y$ and $f^\prime:X^\prime\to Y$, assume there exists a $\mathcal{R}$-fibrewise morphism $\psi:X\to X^\prime$; that is, $(f^\prime\circ \psi)\mathcal{R}f$. Then we have \[\text{$\mathcal{R}$-}\mathrm{sec}(f)\geq \text{$\mathcal{R}$-}\mathrm{sec}(f') \text{\quad and \quad} \text{$\mathcal{R}$-}\mathrm{msec}(f)\geq \text{$\mathcal{R}$-}\mathrm{msec}(f').\] In particular, if $f$ and $f'$ are $\mathcal{R}$-FE, then \[\text{$\mathcal{R}$-}\mathrm{sec}(f)=\text{$\mathcal{R}$-}\mathrm{sec}(f') \text{\quad and \quad} \text{$\mathcal{R}$-}\mathrm{msec}(f)=\text{$\mathcal{R}$-}\mathrm{msec}(f').\]
\end{theorem}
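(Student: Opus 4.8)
The plan is to show directly that any $\mathcal{R}$-categorical covering for $f$ is automatically $\mathcal{R}$-categorical for $f'$, which gives $\mathcal{R}\text{-}\mathrm{sec}(f')\le \mathcal{R}\text{-}\mathrm{sec}(f)$, and then to note that the same bookkeeping handles the mono version and that the reverse inequalities in the ``in particular'' clause follow by symmetry of the hypothesis. First I would dispose of the trivial case $\mathcal{R}\text{-}\mathrm{sec}(f)=\infty$. Otherwise, write $m=\mathcal{R}\text{-}\mathrm{sec}(f)$ and fix an $\mathcal{R}$-categorical covering $\{g_j:Y_j\to Y\}_{j=1}^m$ for $Y$, so that for each $j$ there is a morphism $s_j:Y_j\to X$ with $(f\circ s_j)\,\mathcal{R}\,g_j$.

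The key step is to transport each $s_j$ along $\psi$: set $s_j':=\psi\circ s_j:Y_j\to X'$. Then $f'\circ s_j'=(f'\circ\psi)\circ s_j$; since $\mathcal{R}$ is reflexive we have $s_j\,\mathcal{R}\,s_j$, and combining this with the hypothesis $(f'\circ\psi)\,\mathcal{R}\,f$ and the fact that $\mathcal{R}$ preserves composition yields $(f'\circ\psi\circ s_j)\,\mathcal{R}\,(f\circ s_j)$. By transitivity with $(f\circ s_j)\,\mathcal{R}\,g_j$ we obtain $(f'\circ s_j')\,\mathcal{R}\,g_j$, i.e.\ $s_j'$ is an $\mathcal{R}$-$g_j$-section of $f'$. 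Hence $\{g_j:Y_j\to Y\}_{j=1}^m$ is $\mathcal{R}$-categorical for $f'$ as well, so $\mathcal{R}\text{-}\mathrm{sec}(f')\le m=\mathcal{R}\text{-}\mathrm{sec}(f)$. For the mono statement the argument is identical: if $\{g_j:Y_j\rightarrowtail Y\}_{j=1}^m$ is $\mathcal{R}$-mono categorical for $f$, the same covering with the sections $s_j'=\psi\circ s_j$ witnesses $\mathcal{R}\text{-}\mathrm{msec}(f')\le m=\mathcal{R}\text{-}\mathrm{msec}(f)$, since nothing about the $g_j$ is altered.

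Finally, if $f$ and $f'$ are $\mathcal{R}$-FE there is also an $\mathcal{R}$-fibrewise morphism $\varphi:X'\to X$ with $(f\circ\varphi)\,\mathcal{R}\,f'$, so applying the inequalities just proved with the roles of $f$ and $f'$ interchanged gives $\mathcal{R}\text{-}\mathrm{sec}(f)\le\mathcal{R}\text{-}\mathrm{sec}(f')$ and $\mathcal{R}\text{-}\mathrm{msec}(f)\le\mathcal{R}\text{-}\mathrm{msec}(f')$, hence equality in both. There is no real obstacle here beyond being careful that all three properties of an equivalence relation are genuinely invoked: reflexivity to feed $s_j$ into the composition-preservation, transitivity to chain the two $\mathcal{R}$-relations, and symmetry to run the argument in the opposite direction.
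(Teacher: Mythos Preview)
Your proof is correct and essentially identical to the paper's own argument: both transport an $\mathcal{R}$-$g_j$-section $s_j$ of $f$ to $\psi\circ s_j$, use composition-preservation and transitivity to conclude it is an $\mathcal{R}$-$g_j$-section of $f'$, and then repeat verbatim for the mono case. Your write-up is slightly more explicit about which equivalence-relation axioms are invoked where, but there is no substantive difference.
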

\begin{proof}
 Suppose there is a covering $\{g_j:Y_j\to Y\}_{j=1}^m$ for $Y$ such that for each $j=1,\ldots,m$, $f$ admits a $\mathcal{R}$-$g_j$-section, that is, a morphism $s:Y_j\to X$ in $\mathcal{C}$ with $(f\circ s)\mathcal{R}g_j$. Since $\mathcal{R}$ preserves composition, we have that $(f'\circ \psi\circ s)\mathcal{R}(f\circ s)$ because $(f'\circ \psi)\mathcal{R}f$. Thus, we obtain $(f'\circ \psi\circ s)\mathcal{R}g_j$, indicating that $\psi\circ s$ is a $\mathcal{R}$-$g_j$-section of $f'$. Therefore, we conclude that  $\text{$\mathcal{R}$-}\mathrm{sec}(f)\geq \text{$\mathcal{R}$-}\mathrm{sec}(f')$.

 Similarly, suppose there is a covering $\{g_j:Y_j\rightarrowtail Y\}_{j=1}^m$ for $Y$ such that for each $j=1,\ldots,m$, $g_j$ is a monomorphism and $f$ admits a $\mathcal{R}$-$g_j$-section, that is, a morphism $s:Y_j\to X$ in $\mathcal{C}$ with $(f\circ s)\mathcal{R}g_j$. Since $\mathcal{R}$ preserves composition, we have $(f'\circ \psi\circ s)\mathcal{R}(f\circ s)$ because $(f'\circ \psi)\mathcal{R}f$. Therefore, we obtain $(f'\circ \psi\circ s)\mathcal{R}g_j$, indicating that $\psi\circ s$ is a $\mathcal{R}$-$g_j$-section of $f'$. Consequently, we conclude that $\text{$\mathcal{R}$-}\mathrm{msec}(f)\geq \text{$\mathcal{R}$-}\mathrm{msec}(f')$. 
\end{proof}

Theorem~\ref{r-fe-invariance} generalizes property (2) mentioned in  the introduction. 

\medskip Given $f,h\in\text{Mor}_{\mathcal{C}}(X,Y)$, note that if $f\mathcal{R}h$, then $(f\circ 1_X)\mathcal{R}h$ and $f\mathcal{R}(h\circ 1_X)$, which implies that $f$ and $h$ are $\mathcal{R}$-FE. Therefore, Theorem~\ref{r-fe-invariance} leads to the following statement. 

\begin{corollary}\label{cor:fRg-equal-sec}
Let $\mathcal{C}$ be a category with covers, and suppose that $\mathcal{R}$ is an equivalence relation over $\text{Mor}_{\mathcal{C}}(c,c^\prime)$ for each $c,c^\prime\in \mathcal{C}$, preserving composition. Let $f,h\in\text{Mor}_{\mathcal{C}}(X,Y)$. If $f\mathcal{R}h$, then we have \[\text{$\mathcal{R}$-sec}(f)=\text{$\mathcal{R}$-sec}(h) \text{\quad and \quad} \text{$\mathcal{R}$-msec}(f)=\text{$\mathcal{R}$-msec}(h).\]
\end{corollary}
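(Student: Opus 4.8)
The plan is to obtain this as an immediate consequence of Theorem~\ref{r-fe-invariance}, using the identity morphism $1_X$ as an $\mathcal{R}$-fibrewise morphism between $f$ and $h$. The only structural input beyond the theorem is that $\mathcal{R}$ is an equivalence relation, so that $f\mathcal{R}h$ also yields $h\mathcal{R}f$ by symmetry.

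First I would set up the data for Theorem~\ref{r-fe-invariance} with the codomain $Y$ fixed and the two morphisms being $f\colon X\to Y$ and $h\colon X\to Y$. Taking $\psi=1_X\colon X\to X$ (so that $X'=X$), one has $h\circ 1_X=h$, and since $f\mathcal{R}h$ implies $h\mathcal{R}f$, the relation $(h\circ 1_X)\mathcal{R}f$ holds. Hence $1_X$ is an $\mathcal{R}$-fibrewise morphism from $f$ to $h$, and Theorem~\ref{r-fe-invariance} gives
\[
\text{$\mathcal{R}$-}\mathrm{sec}(f)\geq \text{$\mathcal{R}$-}\mathrm{sec}(h)
\quad\text{and}\quad
\text{$\mathcal{R}$-}\mathrm{msec}(f)\geq \text{$\mathcal{R}$-}\mathrm{msec}(h).
\]

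Next I would repeat the argument with the roles of $f$ and $h$ interchanged: taking $\psi=1_X$ again, now regarded as a morphism from $h$ to $f$, we have $(f\circ 1_X)\mathcal{R}h$ directly from the hypothesis $f\mathcal{R}h$, so Theorem~\ref{r-fe-invariance} yields the reverse inequalities $\text{$\mathcal{R}$-}\mathrm{sec}(h)\geq \text{$\mathcal{R}$-}\mathrm{sec}(f)$ and $\text{$\mathcal{R}$-}\mathrm{msec}(h)\geq \text{$\mathcal{R}$-}\mathrm{msec}(f)$. Combining the two pairs of inequalities gives the asserted equalities. Equivalently, one can observe in one shot that $f$ and $h$ are $\mathcal{R}$-FE by exhibiting $\psi=\varphi=1_X$ (with $(h\circ\psi)\mathcal{R}f$ and $(f\circ\varphi)\mathcal{R}h$ following from $h\mathcal{R}f$ and $f\mathcal{R}h$ respectively) and then invoking the ``in particular'' clause of Theorem~\ref{r-fe-invariance}.

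There is essentially no obstacle here: the proof is a two-line application of the invariance theorem. The only point requiring a moment's care is that both directions of the $\mathcal{R}$-FE relation must be verified, and this uses the symmetry of $\mathcal{R}$ to pass from $f\mathcal{R}h$ to $h\mathcal{R}f$; the hypotheses that $\mathcal{R}$ is an equivalence relation preserving composition are exactly what make Theorem~\ref{r-fe-invariance} applicable, so no further assumptions are needed.
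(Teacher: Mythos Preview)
Your proposal is correct and essentially identical to the paper's own argument: the paper observes that $f\mathcal{R}h$ gives $(f\circ 1_X)\mathcal{R}h$ and $(h\circ 1_X)\mathcal{R}f$ (using symmetry of $\mathcal{R}$), so $f$ and $h$ are $\mathcal{R}$-FE via $\psi=\varphi=1_X$, and then invokes Theorem~\ref{r-fe-invariance}. This is exactly your ``one shot'' observation at the end.
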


Corollary~\ref{cor:fRg-equal-sec} generalizes property (3) mentioned in the introduction.

\subsection{Weak pullback}
Let $\mathcal{C}$ be a category. From Definition~\ref{defn:quasi-pullback}, a weak pullback in $\mathcal{C}$ is defined by a strictly commutative diagram: 
\begin{eqnarray}\label{xfyzz}
\xymatrix{ \rule{3mm}{0mm}& X^\prime \ar[r]^{\varphi'} \ar[d]_{f^\prime} & X \ar[d]^{f} & \\ &
       Y^\prime  \ar[r]_{\,\,\varphi} &  Y &}
\end{eqnarray} 
such that for any strictly commutative diagram as the one on the left hand-side of~(\ref{ddiagramadoble}), there exists a (not necessarily unique) morphism $h:Z\to X^\prime$ that renders a strictly commutative diagram as the one on the right hand-side of~(\ref{ddiagramadoble}).
\begin{eqnarray}\label{ddiagramadoble}
\xymatrix{
Z \ar@/_10pt/[dr]_{\alpha} \ar@/^30pt/[rr]^{\beta} & & X \ar[d]^{f}  & & &
Z\rule{-1mm}{0mm} \ar@/_10pt/[dr]_{\alpha} \ar@/^30pt/[rr]^{\beta}\ar[r]^{h} & 
X^\prime \ar[r]^{\azul{\varphi'}} \ar[d]_{f^\prime} & X \\
& Y^\prime  \ar[r]_{\,\,\varphi} &  Y & & & & Y^\prime &  \rule{3mm}{0mm}}
\end{eqnarray}

We have the following statement that describes the behaviour of the (mono) sectional number under weak pullbacks.

\begin{theorem}[Under weak pullback]\label{quasi-pullback-sec}
Let $\mathcal{C}$ be a category equipped with a quasi Grothendieck topology and weak pullbacks. If~(\ref{xfyzz}) is a weak pullback in $\mathcal{C}$, then:
\begin{itemize}
    \item[(1)] $\text{sec}\hspace{.1mm}(f^\prime)\leq \text{sec}\hspace{.1mm}(f)$.
    \item[(2)] Suppose that $\mathcal{C}$ is a category with pullbacks. Then $\text{msec}\hspace{.1mm}(f^\prime)\leq \text{msec}\hspace{.1mm}(f)$. 
\end{itemize}
\end{theorem}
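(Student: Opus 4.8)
The plan is to transport a categorical covering of $Y$ down to $Y'$ by forming (weak) pullbacks, and then to produce the required sections of $f'$ by invoking the lifting property of the weak pullback~(\ref{xfyzz}). The quasi Grothendieck topology is used only through axiom (T2), and the existence of weak pullbacks only through Proposition~\ref{lem;pullback}.

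For item (1), suppose $\text{sec}(f)=m$ and fix a categorical covering $\{g_j\colon Y_j\to Y\}_{j=1}^m$ for $Y$ together with $g_j$-sections $s_j\colon Y_j\to X$ of $f$, so that $f\circ s_j=g_j$. For each $j$, I would choose a weak pullback of $g_j$ along $\varphi$, with vertex $\widetilde{Y}_j$ and legs $\widetilde{g}_j\colon\widetilde{Y}_j\to Y'$, $\widetilde{\varphi}_j\colon\widetilde{Y}_j\to Y_j$ satisfying $\varphi\circ\widetilde{g}_j=g_j\circ\widetilde{\varphi}_j$. By Proposition~\ref{lem;pullback}, $\varphi^\ast\bigl(\mathcal{C}^{(0)}_{\mid Y}\{g_j\colon Y_j\to Y\}_{j=1}^m\bigr)=\mathcal{C}^{(0)}_{\mid Y'}\{\widetilde{g}_j\colon\widetilde{Y}_j\to Y'\}_{j=1}^m$, and since a quasi Grothendieck topology satisfies (T2), the left-hand side is a covering sieve on $Y'$; hence $\{\widetilde{g}_j\colon\widetilde{Y}_j\to Y'\}_{j=1}^m$ is a covering for $Y'$. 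To get a $\widetilde{g}_j$-section of $f'$, note that $f\circ(s_j\circ\widetilde{\varphi}_j)=g_j\circ\widetilde{\varphi}_j=\varphi\circ\widetilde{g}_j$, so the data $\widetilde{Y}_j$, $\alpha=\widetilde{g}_j$, $\beta=s_j\circ\widetilde{\varphi}_j$ form a strictly commutative diagram of the type on the left of~(\ref{ddiagramadoble}). The weak pullback property of~(\ref{xfyzz}) then yields $h_j\colon\widetilde{Y}_j\to X'$ with $f'\circ h_j=\widetilde{g}_j$ (and also $\varphi'\circ h_j=s_j\circ\widetilde{\varphi}_j$, which is not needed). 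Thus each $h_j$ is a $\widetilde{g}_j$-section of $f'$, and $\text{sec}(f')\leq m=\text{sec}(f)$.

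For item (2), I would rerun exactly this argument with $\mathcal{C}$ assumed to have genuine pullbacks, choosing each $\widetilde{Y}_j$ to be the honest pullback $\widehat{Y}_j$ of $g_j$ along $\varphi$. Starting from a mono categorical covering $\{g_j\colon Y_j\rightarrowtail Y\}_{j=1}^m$ realizing $\text{msec}(f)=m$, the leg $\widehat{g}_j\colon\widehat{Y}_j\to Y'$ of a genuine pullback of the monomorphism $g_j$ is again a monomorphism; since a pullback is in particular a weak pullback, Proposition~\ref{lem;pullback} and (T2) show $\{\widehat{g}_j\colon\widehat{Y}_j\rightarrowtail Y'\}_{j=1}^m$ is a covering for $Y'$ through monomorphisms, and the same lifting step through~(\ref{xfyzz}) supplies $\widehat{g}_j$-sections of $f'$. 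Hence $\text{msec}(f')\leq m=\text{msec}(f)$.

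The main obstacle is the bookkeeping with the two nested layers of (weak) pullbacks: one must keep the square defining $\widetilde{Y}_j$ (respectively $\widehat{Y}_j$) strictly commutative so that $\widetilde{g}_j$ and $s_j\circ\widetilde{\varphi}_j$ genuinely fit the hypothesis of the weak pullback~(\ref{xfyzz}); and in item (2) one must remember that weak pullbacks need not preserve monomorphisms, which is precisely why the stronger hypothesis that $\mathcal{C}$ has pullbacks is imposed there.
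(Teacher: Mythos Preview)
Your proposal is correct and follows essentially the same approach as the paper: transport the categorical covering along $\varphi$ via (weak) pullbacks, invoke Proposition~\ref{lem;pullback} together with axiom (T2) to see the resulting family is again a covering, and then lift the sections using the weak pullback property of~(\ref{xfyzz}). The only cosmetic difference is in item~(2): the paper first passes through the subobject description of $\mathrm{msec}$ (Proposition~\ref{rem:via-subobjects}) before running the pullback argument, whereas you work directly with a mono categorical covering; the core step---that a genuine pullback preserves monomorphisms---is identical in both.
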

\begin{proof}
\noindent\begin{enumerate}
    \item[(1)] Suppose that $E\stackrel{h}{\to} Y$ is a morphism having codomain $Y$ such that there exists a morphism $s:E\to X$ in $\mathcal{C}$ with $f\circ s=h$. We consider the morphism $\widetilde{h}:\widetilde{E}\to Y'$ given by a weak pullback: 
    \begin{eqnarray*}
\xymatrix{ \rule{3mm}{0mm}& \widetilde{E} \ar[r]^{\,\,\varphi_{|}} \ar[d]_{\widetilde{h}} & E \ar[d]^{h} & \\ &
       Y^\prime  \ar[r]_{\,\,\varphi} &  Y &}
\end{eqnarray*} Next, take the following strictly commutative diagram as the one on the left hand-side of~(\ref{1-diagramadobleee}). Since the diagram~(\ref{xfyzz}) is a weak pullback, there exists a morphism $\widetilde{s}:\widetilde{E}\to X'$ that renders a strictly commutative diagram as the one on the right hand-side of~(\ref{1-diagramadobleee}).
\begin{eqnarray}\label{1-diagramadobleee}
\xymatrix{
\widetilde{E} \ar@/_10pt/[dr]_{\widetilde{h}} \ar@/^30pt/[rr]^{s\circ \varphi_{|}} & & X \ar[d]^{f}  & & &
\widetilde{E}\rule{-1mm}{0mm} \ar@/_10pt/[dr]_{\widetilde{h}} \ar@/^30pt/[rr]^{s\circ \varphi_{|}}\ar[r]^{\widetilde{s}} & 
X' \ar[r]^{\varphi'} \ar[d]_{f'} & X \\
& Y'  \ar[r]_{\,\,\varphi} &  Y & & & & Y' &  \rule{3mm}{0mm}}
\end{eqnarray} Then, $\widetilde{s}$ is a $\widetilde{h}$-section of $f'$.

Now, if $\text{sec}\hspace{.1mm}(f)=m$ and $\{E_i\stackrel{h_i}{\to} Y\}_{i=1}^m$ is a categorical covering for $f$, then by the construction above, together with Proposition~\ref{lem;pullback} and axiom $(T2)$, we obtain that $\{\widetilde{h_i}\colon\widetilde{E_i}\to Y'\}_{i=1}^m$ is a categorical covering for $f'$. Therefore, the inequality $\text{sec}\hspace{.1mm}(f^\prime)\leq \text{sec}\hspace{.1mm}(f)$ holds.
    \item[(2)] Here, we use Proposition~\ref{rem:via-subobjects}. Suppose that $U=[E\stackrel{h}{\rightarrowtail} Y]$ is a subobject of $Y$ such that there exists a morphism $s:E\to X$ in $\mathcal{C}$ with $f\circ s=h$. Recall that in any category, a pullback of a monomorphism along any arrow is itself a monomorphsm \cite[p. 30]{maclane1992}. We consider the monomorphism $\widetilde{h}:\varphi^{-1}(E)\rightarrowtail Y'$ given by the pullback: 
    \begin{eqnarray*}
\xymatrix{ \rule{3mm}{0mm}& \varphi^{-1}(E) \ar[r]^{\,\,\varphi_{|}} \ar@{^{(}->}[d]_{\widetilde{h}} & E \ar@{^{(}->}[d]^{h} & \\ &
       Y^\prime  \ar[r]_{\,\,\varphi} &  Y &}
\end{eqnarray*} Next, take the following strictly commutative diagram as the one on the left hand-side of~(\ref{diagramadobleee}). Since the diagram~(\ref{xfyzz}) is a weak pullback, there exists a morphism $\widetilde{s}:\varphi^{-1}(E)\to X'$ that renders a strictly commutative diagram as the one on the right hand-side of~(\ref{diagramadobleee}):
\begin{eqnarray}\label{diagramadobleee}
\xymatrix{
\varphi^{-1}(E) \ar@{^{(}->}@/_10pt/[dr]_{\widetilde{h}} \ar@/^30pt/[rr]^{s\circ \varphi_{|}} & & X \ar[d]^{f}  & & &
\varphi^{-1}(E)\rule{-1mm}{0mm} \ar@{^{(}->}@/_10pt/[dr]_{\widetilde{h}} \ar@/^30pt/[rr]^{s\circ \varphi_{|}}\ar[r]^{\widetilde{s}} & 
X' \ar[r]^{\varphi'} \ar[d]_{f'} & X \\
& Y'  \ar[r]_{\,\,\varphi} &  Y & & & & Y' &  \rule{3mm}{0mm}}
\end{eqnarray} Then, the subobject $V=[\widetilde{h}\colon\varphi^{-1}(E)\rightarrowtail Y']$ of $Y'$ admits a section of $f'$.

Now, if $\text{msec}\hspace{.1mm}(f)=m$ and $\{U_i=[E_i\stackrel{h_i}{\rightarrowtail} Y]\}_{i=1}^m$ is a categorical covering by subobjects for $Y$, then by the construction above, together with Proposition~\ref{lem;pullback} and axiom $(T2)$, we obtain that $\{V_i=[\widetilde{h_i}\colon\varphi^{-1}(E_i)\rightarrowtail Y']\}_{i=1}^m$ is a covering for $Y'$ such that each subobject $V_i$ of $Y'$ admits a section of $f'$. Therefore, the inequality $\text{msec}\hspace{.1mm}(f^\prime)\leq \text{msec}\hspace{.1mm}(f)$ holds.
\end{enumerate}
\end{proof}

Theorem~\ref{quasi-pullback-sec} generalizes property (4) mentioned it the introduction.

\begin{remark}
The corresponding assertion of Theorem~\ref{quasi-pullback-sec} 
for $\text{$\mathcal{R}$-}\mathrm{sec}\hspace{.1mm}$ may fail. This can be illustrated by considering the category of spaces together with homotopy relation. For example, consider the constant map $\overline{1}:X\to\mathbb{R}^2$ and the inclusion $i:S^1\hookrightarrow \mathbb{R}^2$: \begin{eqnarray*}
\xymatrix{  & X \ar[d]^{\overline{1}} & \\
       S^1  \ar@{^{(}->}[r]_{\,\, i} &  \mathbb{R}^2 &}
\end{eqnarray*} Note that its canonical pullback is given by: 
\begin{eqnarray*}
\xymatrix{ X\ar[r]^{\,\,} \ar[d]_{\overline{1}} & X \ar[d]^{\overline{1}} & \\
       S^1 \ar@{^{(}->}[r]_{\,\, i} &  \mathbb{R}^2 &}
\end{eqnarray*} Thus, we have $\mathrm{secat}\hspace{.1mm}(\overline{1}:X\to S^1)=\mathrm{cat}(S^1)=2$ and $\mathrm{secat}\hspace{.1mm}(\overline{1}:X\to \mathbb{R}^2)=\mathrm{cat}(\mathbb{R}^2)=1$.
\end{remark}

\subsection{Continuous functors} Let $\mathcal{C}$ be a category with covers, and suppose that $\mathcal{R}$ is an equivalence relation over $\text{Mor}_{\mathcal{C}}(c,c^\prime)$ for each $c,c^\prime\in \mathcal{C}$. Similarly, let $\mathcal{D}$ be a category with covers,  and suppose that $\mathcal{R}'$ is an equivalence relation over $\text{Mor}_{\mathcal{D}}(d,d^\prime)$ for each $d,d^\prime\in \mathcal{D}$. 

\medskip Let $\mathfrak{F}:\mathcal{C}\to\mathcal{D}$ be a \textit{continuous functor} (that is, if $\{A_\lambda\stackrel{i_\lambda}{\to} X\}_{\lambda\in\Lambda}$ is a covering for $X$, then $\{\mathfrak{F}(A_\lambda)\stackrel{\mathfrak{F}(i_\lambda)}{\to} \mathfrak{F}(X)\}_{\lambda\in\Lambda}$ is a covering for $\mathfrak{F}(X)$) such that $\mathfrak{F}(f)\mathcal{R}'\mathfrak{F}(g)$ whenever $f\mathcal{R}g$. 

\medskip Under these conditions, the following statement holds:

\begin{theorem}[Continuous functors]\label{thm:continuous-functor}
Let $f:X\to Y$ be a morphism in $\mathcal{C}$. Under the conditions stated above, we have \[\text{$\mathcal{R}'$-sec}(\mathfrak{F}(f))\leq \text{$\mathcal{R}$-sec}(f).\] Furthermore, if  $\mathfrak{F}$ preserves monomorphisms, then \[\text{$\mathcal{R}'$-msec}(\mathfrak{F}(f))\leq \text{$\mathcal{R}$-msec}(f).\]  
\end{theorem}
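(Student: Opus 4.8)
The plan is to unwind the definition of the $\mathcal{R}$-sectional number and transport an $\mathcal{R}$-categorical covering along $\mathfrak{F}$. If $\text{$\mathcal{R}$-sec}(f)=\infty$ there is nothing to prove, so I would assume $\text{$\mathcal{R}$-sec}(f)=m<\infty$ and fix an $\mathcal{R}$-categorical covering $\{g_j\colon Y_j\to Y\}_{j=1}^{m}$ for $Y$ together with, for each $j$, a morphism $s_j\colon Y_j\to X$ satisfying $(f\circ s_j)\,\mathcal{R}\,g_j$.

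First I would push the covering forward: since $\mathfrak{F}$ is continuous, $\{\mathfrak{F}(g_j)\colon\mathfrak{F}(Y_j)\to\mathfrak{F}(Y)\}_{j=1}^{m}$ is a covering for $\mathfrak{F}(Y)$. Next I would exhibit the required $\mathcal{R}'$-sections: by functoriality $\mathfrak{F}(f)\circ\mathfrak{F}(s_j)=\mathfrak{F}(f\circ s_j)$, and since $\mathfrak{F}$ carries $\mathcal{R}$-related morphisms to $\mathcal{R}'$-related ones, $(f\circ s_j)\,\mathcal{R}\,g_j$ yields $\mathfrak{F}(f\circ s_j)\,\mathcal{R}'\,\mathfrak{F}(g_j)$, that is, $(\mathfrak{F}(f)\circ\mathfrak{F}(s_j))\,\mathcal{R}'\,\mathfrak{F}(g_j)$. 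Hence each $\mathfrak{F}(s_j)$ is an $\mathcal{R}'$-$\mathfrak{F}(g_j)$-section of $\mathfrak{F}(f)$, so $\{\mathfrak{F}(g_j)\}_{j=1}^{m}$ is an $\mathcal{R}'$-categorical covering for $\mathfrak{F}(f)$, giving $\text{$\mathcal{R}'$-sec}(\mathfrak{F}(f))\le m=\text{$\mathcal{R}$-sec}(f)$.

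For the mono statement I would run the identical argument starting from an $\mathcal{R}$-mono categorical covering $\{g_j\colon Y_j\rightarrowtail Y\}_{j=1}^{m}$; the only extra ingredient is that $\mathfrak{F}$ preserves monomorphisms, so each $\mathfrak{F}(g_j)\colon\mathfrak{F}(Y_j)\rightarrowtail\mathfrak{F}(Y)$ is again a monomorphism, and the construction of the $\mathcal{R}'$-sections is verbatim, yielding $\text{$\mathcal{R}'$-msec}(\mathfrak{F}(f))\le\text{$\mathcal{R}$-msec}(f)$.

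I do not expect a genuine obstacle: this is a direct diagram chase, and the whole point is that \emph{continuity of $\mathfrak{F}$ is exactly the statement that coverings map to coverings}, so no hypothesis on preservation of (weak) pullbacks, limits, or the internal shape of the generated sieves is needed. The only minor care points are dispatching the trivial $\text{$\mathcal{R}$-sec}(f)=\infty$ case and checking that the compatibility $\mathfrak{F}(f)\mathcal{R}'\mathfrak{F}(g)$ whenever $f\mathcal{R}g$ is applied to the composite $f\circ s_j$ rather than to $f$ and $s_j$ separately.
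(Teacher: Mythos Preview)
Your proof is correct and is essentially identical to the paper's argument: both take an $\mathcal{R}$-categorical covering of size $m$, push it forward to a covering of $\mathfrak{F}(Y)$ via continuity of $\mathfrak{F}$, and use functoriality together with the hypothesis $\mathfrak{F}(f)\,\mathcal{R}'\,\mathfrak{F}(g)$ whenever $f\,\mathcal{R}\,g$ to turn each $s_j$ into an $\mathcal{R}'$-section $\mathfrak{F}(s_j)$. Your version is in fact slightly more explicit than the paper's (you spell out the functoriality step $\mathfrak{F}(f)\circ\mathfrak{F}(s_j)=\mathfrak{F}(f\circ s_j)$ and dispose of the $\infty$ case), but there is no substantive difference.
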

\begin{proof}
 Set $\mathcal{R}\text{-}\mathrm{sec}(f)=m<\infty$, and consider a covering $\{j_\ell:B_\ell\to Y\}_{\ell=1}^{m}$ of $Y$ such that for each $\ell=1,\ldots,m$, $f$ admits a $\mathcal{R}$-$j_\ell$-section:
  \begin{eqnarray*}
\xymatrix{ X \ar[rr]^{f} & &Y   \\
        &  B_\ell\ar@{-->}[lu]^{}\ar[ru]_{j_\ell} & } 
\end{eqnarray*}  Then, the collection $\{\mathfrak{F}(j_\ell):\mathfrak{F}(B_\ell)\to \mathfrak{F}(Y)\}_{\ell=1}^{m}$ forms a covering of $\mathfrak{F}(Y)$ such that for each $\ell=1,\ldots,m$, $\mathfrak{F}(f)$ admits a $\mathcal{R}'$-$\mathfrak{F}(j_\ell)$-section:  
 \begin{eqnarray*}
\xymatrix{ \mathfrak{F}(X) \ar[rr]^{\mathfrak{F}(f)} & &\mathfrak{F}(Y)   \\
        &  \mathfrak{F}(B_\ell)\ar@{-->}[lu]^{}\ar[ru]_{\mathfrak{F}(j_\ell)} & } 
\end{eqnarray*} Therefore, we have $\text{$\mathcal{R}'$-sec}(\mathfrak{F}(f))\leq m=\text{$\mathcal{R}$-sec}(f)$.

Additionally, if each $j_\ell:B_\ell\to Y$ is a monomorphism, then each $\mathfrak{F}(j_\ell):\mathfrak{F}(B_\ell)\to \mathfrak{F}(Y)$ is also a monomorphism (here we use the fact that $\mathfrak{F}$ preserves monomorphisms). Thus, we also obtain the inequality $\text{$\mathcal{R}'$-msec}(\mathfrak{F}(f))\leq \text{$\mathcal{R}$-msec}(f)$.
\end{proof}

One has the following example.

\begin{example}\label{exam:forgetful-functor}
 Note that the usual forgetful functors $\mathfrak{F}:(R\text{-Mod},\text{Proper submodules})\to (\text{Grp},\text{Proper subgroups})$ and $\mathfrak{G}:(\text{Ring},\text{Proper subrings})\to (\text{Grp},\text{Proper subgroups})$ are continuous. By Theorem~\ref{thm:continuous-functor}, it follows that  $\text{sec}'(f)\geq \text{sec}'(\mathfrak{F}(f))$ for any $R$-homomorphism $f$ and $\text{sec}'(h)\geq \text{sec}'(\mathfrak{G}(h))$ for any ring homomorphism $h$.   
\end{example}

\subsection{Upper bound} In this section, we present upper bounds for the sectional number. For this purpose, we introduce the notion of LS category (Definition~\ref{defn:LS-category}) and projective covering number (Definition~\ref{defn:projective-covering-number}) for an object. 

\medskip Let $\mathcal{C}$ be a category with covers. A \textit{weak initial object} of $\mathcal{C}$ is an object $I$ in $\mathcal{C}$ such that for every object $X$ in $\mathcal{C}$, there exists a (not necessarily unique) morphism $I\to X$. Suppose that $\mathcal{R}$ is an equivalence relation over $\text{Mor}_{\mathcal{C}}(c,c^\prime)$ for each $c,c^\prime\in \mathcal{C}$. 

\medskip Fixed a weak initial object $I$ in $\mathcal{C}$; we say that an object $Y$ is \textit{$\mathcal{R}$-connected} if $i\mathcal{R} i'$ (and, of course, $\text{$\mathcal{R}$-sec}(i)=\text{$\mathcal{R}$-sec}(i')$, see Corollary~\ref{cor:fRg-equal-sec}) for any morphisms $i,i':I\to Y$. Hence, we have the following definition.

\begin{definition}[LS category]\label{defn:LS-category}
The \textit{LS category} of a $\mathcal{R}$-connected object $Y$ is given by \[\text{cat}_{\mathcal{R}}(Y)=\text{$\mathcal{R}$-sec}(i)\] for some morphism $i:I\to Y$.   
\end{definition}

Note that in the category $(\text{Top},\text{Open})$, where $I$ is any point and we consider the homotopy relation, our LS category coincides with the well-known Lusternik-Schnirelmann category of path connected spaces (see \cite{cornea2003}). 

\medskip Now, we present the following upper bound for the $\mathcal{R}$-sectional number.

\begin{theorem}[Upper bound]\label{thm:upper-bound-LS-category} 
 Let $\mathcal{C}$ be a category with covers and a weak initial object $I$. Suppose that $\mathcal{R}$ is an equivalence relation over $\text{Mor}_{\mathcal{C}}(c,c^\prime)$ for each $c,c^\prime\in \mathcal{C}$, preserving composition. Let $f:X\to Y$ be any morphism. If $Y$ is $\mathcal{R}$-connected, then \[\text{$\mathcal{R}$-sec}(f)\leq \text{cat}_{\mathcal{R}}(Y).\]    
\end{theorem}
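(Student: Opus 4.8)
The plan is to reduce the inequality to a single application of the invariance Theorem~\ref{r-fe-invariance}. By Definition~\ref{defn:LS-category} we have $\text{cat}_{\mathcal{R}}(Y)=\text{$\mathcal{R}$-sec}(i)$ for \emph{any} morphism $i\colon I\to Y$ (the choice does not matter precisely because $Y$ is $\mathcal{R}$-connected, cf.\ Corollary~\ref{cor:fRg-equal-sec}). Hence it suffices to produce one such $i$ together with a $\mathcal{R}$-fibrewise morphism relating $i$ and $f$, oriented so as to bound $\text{$\mathcal{R}$-sec}(f)$ from above. Concretely, I want a morphism $\psi\colon I\to X$ with $(f\circ\psi)\,\mathcal{R}\, i$, since Theorem~\ref{r-fe-invariance}, applied with the morphism $i\colon I\to Y$ playing the role of ``$f$'' and $f\colon X\to Y$ playing the role of ``$f'$'' (so that $\psi$ is the map on domains $I\to X$), then yields $\text{$\mathcal{R}$-sec}(f)\le\text{$\mathcal{R}$-sec}(i)=\text{cat}_{\mathcal{R}}(Y)$.

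First I would invoke that $I$ is a weak initial object to obtain \emph{some} morphism $\psi\colon I\to X$; this requires no hypothesis beyond the existence of a weak initial object. Then $f\circ\psi\colon I\to Y$ and the chosen $i\colon I\to Y$ are both morphisms out of $I$ into $Y$, so the assumption that $Y$ is $\mathcal{R}$-connected gives exactly $(f\circ\psi)\,\mathcal{R}\, i$. This is the only point at which the connectedness hypothesis is used, and it is precisely what compensates for the lack of uniqueness inherent in a weak initial object: any two candidates $\psi$ (and any two candidates $i$) land in the same $\mathcal{R}$-class.

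With $\psi$ in hand, $\psi$ is a $\mathcal{R}$-fibrewise morphism in the exact sense required by Theorem~\ref{r-fe-invariance}, and the desired inequality is immediate. There is essentially no computational content here; the statement is a direct corollary of the invariance theorem once weak-initiality and $\mathcal{R}$-connectedness are combined. The one thing to watch is the orientation of the invariance: the map on domains must go $I\to X$ (not $X\to I$), so that what we extract is an upper bound on $\text{$\mathcal{R}$-sec}(f)$ rather than a lower bound. I do not anticipate any genuine obstacle beyond correctly bookkeeping this direction.
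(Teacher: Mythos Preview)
Your proposal is correct and follows essentially the same approach as the paper. The only cosmetic difference is that the paper chooses the morphism into $X$ first (writing $i\colon I\to X$) and then uses $f\circ i\colon I\to Y$ as the representative for $\text{cat}_{\mathcal{R}}(Y)$, so that the $\mathcal{R}$-fibrewise condition $(f\circ i)\,\mathcal{R}\,(f\circ i)$ holds by reflexivity rather than by $\mathcal{R}$-connectedness; the connectedness hypothesis is then invoked only to identify $\text{$\mathcal{R}$-sec}(f\circ i)$ with $\text{cat}_{\mathcal{R}}(Y)$.
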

\begin{proof}
    Set $i:I\to X$ to be a morphism and consider $\text{cat}_{\mathcal{R}}(Y)=\text{$\mathcal{R}$-sec}(f\circ i)$. By Theorem~\ref{r-fe-invariance}, we have $\text{$\mathcal{R}$-sec}(f\circ i)\geq \text{$\mathcal{R}$-sec}(f)$. Thus, it follows that $\text{cat}_{\mathcal{R}}(Y)\geq \text{$\mathcal{R}$-sec}(f)$.
\end{proof}

Theorem~\ref{thm:upper-bound-LS-category} generalizes the inequality $\text{secat}(f)\leq\text{cat}(Y)$, which holds for any map $f:X\to Y$ and any path connected space $Y$ (see property (5) mentioned in the introduction). 

\medskip On the other hand, note that Theorem~\ref{thm:upper-bound-LS-category} is most useful when $\mathcal{R}$ is not the trivial relation. For this reason, we present an upper bound for sectional number. Let $\mathcal{C}$ be a category. An object $P$ is called \textit{projective} if and only if, for every epimorphism $f:N\twoheadrightarrow M$ and every morphism $g:P\to M$, there exists a morphism $h:P\to N$ such that $f\circ h=g$.
 \begin{eqnarray*}
\xymatrix@C=3cm{& N \ar@{->>}[d]^{f}   \\  
       P  \ar@{-->}[ru]^{h}\ar[r]_{g} &  M}
\end{eqnarray*}

Motivated by Example~\ref{sec-homomorphism-modules}(4), we introduce the following definition.

\begin{definition}[Projective covering number]\label{defn:projective-covering-number}
 Let $\mathcal{C}$ be a category with covers. Given an object $M$, the \textit{projective covering number} of $M$, denoted by $\sigma_{proj}(M)$, is the least integer $m$ such that there exists a covering $\{g_j:P_j\to M\}_{j=1}^m$ for $M$ with each $P_j$ projective for $j=1,\ldots,m$. Such a covering $\{g_j:P_j\to M\}_{j=1}^m$ is called \textit{projective categorical}. If no such $m$ exists, we set $\sigma_{proj}(M)=\infty$.   
\end{definition}

Thus, we obtain the following upper bound for the sectional number.

\begin{theorem}[Upper bound]\label{thm:upper-bound-covering-number}
 Let $\mathcal{C}$ be a category with covers, and let $f:N\twoheadrightarrow M$ be any epimorphism. Then, we have \[\text{sec}(f)\leq \sigma_{proj}(M).\]   
\end{theorem}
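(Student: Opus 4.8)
The plan is to use the definition of projective covering number directly, combined with the defining property of projective objects and the fact that $f$ is an epimorphism. If $\sigma_{proj}(M)=\infty$ there is nothing to prove, so assume $\sigma_{proj}(M)=m<\infty$ and fix a projective categorical covering $\{g_j\colon P_j\to M\}_{j=1}^m$ for $M$, meaning each $P_j$ is projective and $\mathcal{C}^{(0)}_{\mid M}\{g_j\colon P_j\to M\}_{j=1}^m\in\mathcal{S}$.

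The key step is that, for each $j=1,\ldots,m$, we have an epimorphism $f\colon N\twoheadrightarrow M$ and a morphism $g_j\colon P_j\to M$ out of the projective object $P_j$; by the definition of a projective object applied to $f$ and $g_j$, there exists a morphism $s_j\colon P_j\to N$ with $f\circ s_j=g_j$. Thus $s_j$ is a $g_j$-section of $f$ in the sense of Definition~\ref{defn:sectional-number} (with $\mathcal{R}$ the trivial relation):
\begin{eqnarray*}
\xymatrix{ N \ar@{->>}[rr]^{f} & &M  & \\
        &  P_j\ar@{-->}[lu]^{s_j}\ar[ru]_{g_j} & &}
\end{eqnarray*}

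Therefore $\{g_j\colon P_j\to M\}_{j=1}^m$ is a categorical covering for $f$: it is a covering for $M$, and $f$ admits a $g_j$-section for each $j$. By Definition~\ref{defn:sectional-number}(1), this gives $\text{sec}(f)\leq m=\sigma_{proj}(M)$, as desired.

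There is essentially no obstacle here; the only thing to be careful about is matching conventions, namely that ``projective'' in Definition for the general category $\mathcal{C}$ is phrased with epimorphisms (not surjections), which is exactly the hypothesis imposed on $f$, so the lifting property applies verbatim. One should also note that the covering $\{g_j\}_{j=1}^m$ need not consist of monomorphisms, which is why the statement bounds $\text{sec}(f)$ rather than $\text{msec}(f)$.
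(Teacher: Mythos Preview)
Your proof is correct and follows exactly the same idea as the paper's own proof, which is simply the one-line observation that an epimorphism $N\twoheadrightarrow M$ admits a $g$-section for every morphism $g:P\to M$ with $P$ projective. You have just spelled out the details (handling the $\sigma_{proj}(M)=\infty$ case and explicitly invoking Definition~\ref{defn:sectional-number}(1)) that the paper leaves implicit.
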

\begin{proof}
This is because any epimorphism $N\twoheadrightarrow M$ admits a $g$-section for each morphism $g:P\to M$ whenever $P$ is projective.
\end{proof}


\section{Cohomology theory over a category}\label{sec:cohomology}
In this section, we introduce a notion of multiplicative cohomology theory for a category with covers and distinguished morphisms (Definition~\ref{multiplicative-cohomology}). We then use this theory to present a cohomological lower bound for the $\mathcal{R}$-sectional number (Theorem~\ref{thm:cohomo-ic}). 

\subsection{Cohomology theory} For our purposes, we present the following universal construction.

\begin{example}\label{exam:lim-colim-construction} Let $\mathcal{C}$ be a category.
   \begin{enumerate}
       \item[(1)][Limit-colimit]\label{exam:limit-colimit} Let $\mathfrak{S}=\{f_i:C_i\to C\}_{i\in I}$ be a family of morphisms in $\mathcal{C}$ with codomain $C$. Consider the following partially ordered set $P=I\sqcup \{\ast\}$ with $i\leq \ast$ for each $i\in I$; and let $\mathcal{C}_P$ be its associated category. Define the diagram $D:\mathcal{C}_P\to \mathcal{C}$ by $D(i)=C_i$ for each $i\in I$ and $D(\ast)=C$; also, let $D(i\leq\ast)=f_i$ for each $i\in I$. Suppose we can take the limit $L=\text{lim}(D)$ of the diagram $D$, together with its morphisms $\{\lambda_p:L\to D(p)\}_{p\in P}$ such that $\lambda_\ast=D(i\leq \ast)\circ\lambda_i$ for each morphism $i\leq \ast$ in $\mathcal{C}_P$.

   Now, consider the following diagram $\widetilde{D}:\mathcal{C}_P^{\text{op}}\to \mathcal{C}$ defined  by $\widetilde{D}(i)=C_i$ for each $i\in I$ and  $\widetilde{D}(\ast)=L$; set $\widetilde{D}(\ast\leq i)=\lambda_i$ for each $i\in I$. Assume that we can take the colimit $\widetilde{L}=\text{colim}(\widetilde{D})$ of the diagram $\widetilde{D}$, together with its morphisms $\{\rho_p:\widetilde{D}(p)\to \widetilde{L}\}_{p\in P}$, such that $\rho_\ast=\rho_i\circ\widetilde{D}(\ast\leq i)$ for each morphism $\ast\leq i$ in $\mathcal{C}_P^{\text{op}}$.

   Note that the family $\mathfrak{S}\cup\{\lambda_\ast:L\to C\}$ of morphisms, whose codomain is $C$, satisfies the equality  $\lambda_\ast=f_i\circ \widetilde{D}(\ast\leq i)$ for each $i\in I$. By the co-universal property of the colimit $\widetilde{L}=\text{colim}(\widetilde{D})$, there exists a unique morphism $\phi:\widetilde{L}\to C$ such that $\phi\circ \rho_i=f_i$ for each $i\in I$ and $\phi\circ\rho_\ast=\lambda_\ast$.

   We denote such a limit $L$ (in the case that it exists) by $\displaystyle{\lim_{{\overset{\to}{i\in I}}} f_i}$ and the colimit $\widetilde{L}$ (in the case that it exists) by $\displaystyle{\widetilde{\lim_{{\overset{\to}{i\in I}}}} f_i}$. 
   \begin{eqnarray*}
\xymatrix@C=3cm{
 & C_i \ar[d]^{f_i}\ar@/^10pt/[ddr]^{\rho_i} &   \\  
L \ar[ur]^{\lambda_i} \ar@/_10pt/[drr]_{\rho_\ast} \ar[r]_{\lambda_\ast} & C &  \\ 
 & & \widetilde{L}\ar@{-->}[lu]_{\exists !\phi} } 
\end{eqnarray*}
   For instance:
   \begin{enumerate}
       \item[(i)] In the case that $\mathfrak{S}=\{f:A\to C,g:B\to C\}$, observe that $L=A\times_C B$ is the pullback of $f:A\to C$ and $g:B\to C$. Furthermore, $\widetilde{L}=A\sqcup_{A\times_C B} B$ is the pushout of the morphisms $A\times_C B\stackrel{f^\ast(g)}{\to}A$ and $A\times_C B\stackrel{\widetilde{f}(g)}{\to} B$.
\item[(ii)] Let $f:C'\rightarrowtail C$ be a monomorphism. Note that if $\left(C_i\stackrel{f_i}{\to} C\right)=\left(C'\stackrel{f}{\rightarrowtail} C\right)$ for any $i\in I$, then $\left(\widetilde{L}\stackrel{\phi}{\to} C\right)=\left(C'\stackrel{f}{\rightarrowtail} C\right)$.
   \end{enumerate} 
\item[(2)][Product]\label{exam:product-} Suppose that $\mathcal{C}$ admits finite products. Given morphisms $f_\ell:X_\ell\to X_\ell'$, $i_\ell:A_\ell\to X_\ell$, $i_\ell':A_\ell'\to X_\ell'$, and ${f_\ell}_\mid:A_\ell\to A_\ell'$ such that $f_\ell\circ i_\ell=i_\ell'\circ {f_\ell}_\mid$ for each $\ell=1,\ldots,k$, we define: \begin{align*}
    A^\ell&=X_1\times\cdots\times X_{\ell-1}\times A_\ell\times X_{\ell+1}\times\cdots\times X_k,\\
    i^\ell&=1_{X_1}\times\cdots\times 1_{X_{\ell-1}}\times i_{\ell}\times 1_{X_{\ell+1}}\times\cdots\times 1_{X_k},\\
    f^\ell&=f_{1}\times\cdots\times f_{\ell-1}\times {f_{\ell}}_|\times f_{\ell+1}\times\cdots\times f_{k}.\\
\end{align*} We obtain the following commutative diagram:
\begin{eqnarray*}
\xymatrix@C=1.5cm{
 & A^\ell \ar[rrr]^{f^\ell} \ar[dd]^{i^\ell}\ar@/^10pt/[dddr]^{\rho_\ell} &    & & (A')^\ell \ar[dd]^{(i')^\ell}\ar@/^10pt/[dddr]^{\rho'_\ell} &\\ 
 L\ar@{-->}[rrr]^{\exists !\Omega} \ar[ur]^{\lambda_\ell} \ar@/_30pt/[ddrr]_{\rho_\ast} \ar[rd]_{\lambda_\ast} & & & L' \ar[ur]^{\lambda'_\ell} \ar@/_30pt/[ddrr]_{\rho'_\ast} \ar[rd]_{\lambda'_\ast}& & \\
 & \prod_{\ell=1}^{k}X_\ell \ar[rrr]^{\prod_{\ell=1}^{k}f_\ell} &  &  & \prod_{\ell=1}^{k}X'_\ell & \\  
 & & \widetilde{L}\ar@{-->}[lu]_{\exists !\phi} \ar@{-->}[rrr]_{\exists !\xi} &  & & \widetilde{L'}\ar@{-->}[lu]_{\exists !\phi'}} 
\end{eqnarray*}
\item[(3)][Diagonal]\label{exam:diagonal-} Suppose that $\mathcal{C}$ admits finite products. Let $A_1\stackrel{i_1}{\to} X,\ldots,A_k\stackrel{i_k}{\to} X$ be morphisms with codomain $X$. Set \begin{align*}
    A^\ell&=X\times\cdots\times X\times A_\ell\times X\times\cdots\times X,\\
    i^\ell&=1_{X}\times\cdots\times 1_{X}\times i_{\ell}\times 1_{X}\times\cdots\times 1_{X},\\
\end{align*} and let $\Delta_\ell:A_\ell\to A^\ell$ be the usual diagonal morphism. We have the following commutative diagram:
\begin{eqnarray*}
\xymatrix@C=1cm{
 & A_\ell \ar[rrr]^{\Delta_\ell} \ar[dd]^{i_\ell}\ar@/^10pt/[dddr]^{\rho_\ell} &    & & A^\ell \ar[dd]^{i^\ell}\ar@/^10pt/[dddr]^{\rho^\ell} &\\ 
 \displaystyle{\lim_{{\overset{\to}{1\leq \ell\leq k}}} i_\ell}\ar@{-->}[rrr]^{\exists !\Omega} \ar[ur]^{\lambda_\ell} \ar@/_30pt/[ddrr]_{\rho_\ast} \ar[rd]_{\lambda_\ast} & & & \displaystyle{\lim_{{\overset{\to}{1\leq \ell\leq k}}} i^\ell} \ar[ur]^{\lambda^\ell} \ar@/_30pt/[ddrr]_{\rho^\ast} \ar[rd]_{\lambda^\ast}& & \\
 & X \ar[rrr]^{\Delta} &  &  & \prod_{\ell=1}^{k}X  & \\  
 & & \displaystyle{\widetilde{\lim_{{\overset{\to}{1\leq \ell\leq k}}}} i_\ell}\ar@{-->}[lu]_{\exists !\phi} \ar@{-->}[rrr]_{\exists !\xi} &  & & \displaystyle{\widetilde{\lim_{{\overset{\to}{1\leq \ell\leq k}}}} i^\ell}\ar@{-->}[lu]_{\exists !\Phi}} 
\end{eqnarray*} The morphism of pairs \[\Delta=(\Delta,\xi):\left(X,\displaystyle{\widetilde{\lim_{{\overset{\to}{1\leq \ell\leq k}}}} i_\ell}\right)\to \left(\prod_{\ell=1}^{k}X,\displaystyle{\widetilde{\lim_{{\overset{\to}{1\leq \ell\leq k}}}} i^\ell}\right)\] is called the \textit{diagonal morphism}.
   \end{enumerate} 
\end{example}

We now introduce the notion of distinguished morphisms.

\begin{definition}[Distinguished morphisms]\label{defn:distinguished-morphisms}
Let $\mathcal{C}$ be a category with an initial object $I$ and covers. A collection of morphisms in $\mathcal{C}$ is called \textit{distinguished}--refer to as \textit{distinguished morphisms}--if it satisfies the following conditions:
\begin{enumerate}
\item[(1)] For any object $X$, the unique morphism $I\to X$ is distinguished.
\item[(2)] Any morphism $A\to X$ that belongs to a covering of $X$ is distinguished.  
 \item[(3)] The finite product of distinguished morphisms is also distinguished.
    \item[(4)] If $\left(A_1\stackrel{i_1}{\to} X\right),\ldots,\left(A_k\stackrel{i_k}{\to} X\right)$ is any finite collection of distinguished  morphisms with the same codomain, then the morphism $\displaystyle{\widetilde{\lim_{{\overset{\to}{1\leq \ell\leq k}}}} i_\ell}\stackrel{\phi}{\to} X$ (presented in Example~\ref{exam:limit-colimit}(1)) is also distinguished.
\end{enumerate}    
\end{definition}

By Definition~\ref{defn:distinguished-morphisms}(3) and (4), we can make the following remark.

\begin{remark}
   Let $A_1\stackrel{i_1}{\to} X_1,\ldots,A_k\stackrel{i_k}{\to} X_k$ be any finite collection of distinguished morphisms. For each $\ell=1,\ldots,k$, by Definition~\ref{defn:distinguished-morphisms}(3), each morphism \[i^\ell=1_{X}\times\cdots\times 1_{X}\times i_{\ell}\times 1_{X}\times\cdots\times 1_{X}\] is distinguished. Hence, by Definition~\ref{defn:distinguished-morphisms}(4), the morphism \[\displaystyle{\widetilde{\lim_{{\overset{\to}{1\leq \ell\leq k}}}} i^\ell}\stackrel{\phi}{\to} \prod_{\ell=1}^{k} X_\ell\] (presented in  Example~\ref{exam:limit-colimit}(1)) is also distinguished. 
\end{remark}

\medskip 
Let $\mathcal{C}$ be a category with an initial object $I$ and a distinguished collection of morphisms. A \textit{pair} $\left(X,A\stackrel{i}{\to}X\right)$ consists of an object $X\in\mathcal{C}$ together with a distinguished morphism $i:A\to X$ in $\mathcal{C}$ with codomain $X$. Any object $X$ can be considered as the pair $\left(X,I\to X\right)$. 

\medskip A \textit{morphism of pairs} $(f,f_|):\left(X,A\stackrel{i}{\to} X\right)\to \left(Y,B\stackrel{j}{\to} Y\right)$ consists of a morphism $f:X\to Y$ together with a morphism $f_|:A\to B$ such that \begin{align}\label{pair-morphism-eq}
    f\circ i& =j\circ f_|
\end{align}  When the morphism $f_|$ is understood (for example, when it is the only one satisfying equation (\ref{pair-morphism-eq})) we will write $f:\left(X,A\stackrel{i}{\to} X\right)\to \left(Y,B\stackrel{j}{\to} Y\right)$ instead of $(f,f_|):\left(X,A\stackrel{i}{\to} X\right)\to \left(Y,B\stackrel{j}{\to} Y\right)$. 

\medskip The morphism of pairs $(1_X,1_A):\left(X,A\stackrel{i}{\to} X\right)\to \left(X,A\stackrel{i}{\to} X\right)$ will be denoted by $1_{\left(X,A\stackrel{i}{\to} X\right)}$. A morphism of pairs of the form  $f:\left(X,I\to X\right)\to \left(Y,B\stackrel{j}{\to} Y\right)$ will be denoted by $f:X\to \left(Y,B\stackrel{j}{\to} Y\right)$. 

\medskip Given $(f,f_|):\left(X,A\stackrel{i}{\to} X\right)\to \left(Y,B\stackrel{j}{\to} Y\right)$ and $(g,g_|):\left(Y,B\stackrel{j}{\to} Y\right)\to \left(Z,C\stackrel{\kappa}{\to} Z\right)$, 
the composition \[(g,g_|)\circ (f,f_|):\left(X,A\stackrel{i}{\to} X\right)\to \left(Z,C\stackrel{\kappa}{\to} Z\right)\] is defined by   \[(g,g_|)\circ (f,f_|)=(g\circ f,g_|\circ f_|).\] 
Thus, we obtain the category $\mathcal{C}^2$ whose objects are the pairs $\left(X,A\stackrel{i}{\to} X\right)$ and whose morphisms are the morphisms of pairs $f:\left(X,A\stackrel{i}{\to} X\right)\to \left(Y,B\stackrel{j}{\to} Y\right)$. The morphism $1_{\left(X,A\stackrel{i}{\to} X\right)}$ serves as the identity morphism of the pair $\left(X,A\stackrel{i}{\to} X\right)$. When the morphism $i:A\to X$ is understood in the pair $\left(X,A\stackrel{i}{\to} X\right)$, we will simply write the pair as $(X,A)$. 

\medskip Set $R$ be the \textit{restriction functor} defined by:
\begin{eqnarray*}
R: \mathcal{C}^2 &\to& \mathcal{C}^2\\
\left(X,A\right) &\mapsto& A\\
f &\mapsto& f_|.
\end{eqnarray*} 

Let $\mathcal{R}$ be an equivalence relation preserving composition over the set of morphisms of pairs \[\text{Mor}_{\mathcal{C}^2}\left(\left(X,A\right),\left(Y,B\right)\right)\] for each pair $\left(X,A\right),\left(Y,B\right)$. Let $\mathcal{R}\mathcal{C}^2$ be the \textit{$\mathcal{R}$-category} of $\mathcal{C}^2$, which is the category whose objects are the same as those in $\mathcal{C}^2$, but whose morphisms are the $\mathcal{R}$-equivalence classes of morphisms of pairs. Given $[f]\in \text{Mor}_{\mathcal{R}\mathcal{C}^2}\left(\left(X,A\right),\left(Y,B\right)\right)$ and $[g]\in \text{Mor}_{\mathcal{R}\mathcal{C}^2}\left(\left(Y,B\right),\left(Z,C\right)\right)$, we define the composition $[g]\circ [f]$ to be $[g\circ f]$. One can readily verify that $[g]\circ [f]$ is well-defined and constitutes a composition (here we use the fact that $\mathcal{R}$ is an equivalence relation preserving composition). 

\medskip In addition, suppose that for $f,g\in \text{Mor}_{\mathcal{C}^2}\left(\left(X,A\right),\left(Y,B\right)\right)$: $f\mathcal{R}g$ implies $f_|\mathcal{R}g_|$. Then we can still consider the restriction functor \begin{eqnarray*}
R: \mathcal{R}\mathcal{C}^2 &\to& \mathcal{R}\mathcal{C}^2\\
\left(X,A\right) &\mapsto& A\\
\text{$[f]$} &\mapsto& [f_|].
\end{eqnarray*}

Motivated by \cite[Definition 12.1.1, pg. 384]{aguilar2002}, we present the following definition. 

\begin{definition}[Cohomology theory]\label{cohomology-theory} 
Let $\mathcal{C}$ be a category with an initial object, finite limits, finite colimits and covers, along with a distinguished collection of morphisms. Let $\mathcal{A}$ be the category of abelian groups or modules. Let $\mathcal{R}$ be an equivalence relation that preserves composition over the set of morphisms of pairs \[\text{Mor}_{\mathcal{C}^2}\left(\left(X,A\right),\left(Y,B\right)\right)\] for each pair $\left(X,A\right),\left(Y,B\right)$ such that, for $f,g\in \text{Mor}_{\mathcal{C}^2}\left(\left(X,A\right),\left(Y,B\right)\right)$, if $f\mathcal{R}g$, then $f_|\mathcal{R}g_|$. 

A \textit{cohomology theory} $h^\ast$ over $\mathcal{R}\mathcal{C}^2$ consists of a sequence of contravariant functors $h^n:\mathcal{R}\mathcal{C}^2\to \mathcal{A}$ for each $n\in\mathbb{Z}$, along with natural transformations $\delta^n:h^{n}\circ R\to h^{n+1}$ for $n\in\mathbb{Z}$, satisfying the following axioms:
\begin{itemize}
    \item[(C1)] Exactness: For each pair $\left(X,A\stackrel{i}{\to} X\right)\in\mathcal{R}\mathcal{C}^2$, the long sequence   
    \[\cdots\to h^{n-1}(A)\stackrel{\delta^{n-1}\left(X,A\right)}{\to}
    h^n\left(X,A\right)\stackrel{h^n[\iota]}{\to}h^n(X)
    \stackrel{h^n[i]}{\to} h^n(A)
    \stackrel{\delta^n\left(X,A\right)}{\to}\cdots\]
 is exact, where $\iota:X\to \left(X,A\right)$ is the morphism of pairs defined by the identity $1_X$. 
\item[(C2)] Finite covering: If $\{A_\ell\stackrel{i_\ell}{\to} X\}_{\ell=1}^k$ is a finite covering of $X$, then \[h^n\left(X,\displaystyle{\widetilde{\lim_{{\overset{\to}{1\leq \ell\leq k}}}} i_\ell}\right)=0, \text{ for any $n\in\mathbb{Z}$}.\] 
\end{itemize} In this case, we denote the collection $h^\ast=\{h^n,\delta^n\}_{n\in\mathbb{Z}}:\mathcal{R}\mathcal{C}^2\to\mathcal{A}$ as a \textit{$\mathcal{A}$-cohomology theory} or a \textit{cohomology theory with coefficients in $\mathcal{A}$} on $\mathcal{C}$. We will write $f^\ast$ instead of $h^n[f]$ and $\delta$ instead of $\delta^n\left(X,A\right)$. 
\end{definition}

We have the following remark. 

\begin{remark}
 Let $h^\ast=\{h^n:\mathcal{R}\mathcal{C}^2\to\mathcal{A}\}_{n\in\mathbb{Z}}$ be a family of contravariant functors. If $f:\left(X,A\right)\to \left(Y,B\right)$ is an \textit{$\mathcal{R}$-equivalence}, meaning  there exists a morphism of pairs $g:\left(Y,B\right)\to \left(X,A\right)$ such that $(f\circ g)\mathcal{R}1_{\left(Y,B\right)}$ and $(g\circ f)\mathcal{R} 1_{\left(X,A\right)}$ (and, of course, $[f]$ is an isomorphism in $\mathcal{R}\mathcal{C}^2$ whose inverse is $[g]$), then $f^\ast:h^n\left(Y,B\right)\to h^n\left(X,A\right)$ is an isomorphism for each $n\in\mathbb{Z}$. 
\end{remark}

We say that a distinguished morphism $A\stackrel{i}{\to} X$ is a \textit{$\mathcal{R}$-deformation retract} of $X$ if there exists a morphism $r:X\to A$ such that $r\circ i=1_A$ and $(i\circ r)\mathcal{R} 1_X$. 

\begin{proposition}[Cohomology for deformation retracts]\label{cohomology-retratopordefor} 
Let $h^\ast=\{h^n,\delta^n\}_{n\in\mathbb{Z}}:\mathcal{R}\mathcal{C}^2\to\mathcal{A}$ be a family of contravariant functors $h^n$ and natural transformations $\delta^n$ satisfying Axiom (C1). If $A\stackrel{i}{\to} X$ is a $\mathcal{R}$-deformation retract of $X$, then \[h^n\left(X,A\stackrel{i}{\to} X\right)=0 \text{ for each } n\in\mathbb{Z}.\] In particular, for any distinguished isomorphism $A\stackrel{i}{\to} X$, we have  $h^n\left(X,A\stackrel{i}{\to} X\right)=0$ for each $n\in\mathbb{Z}$.
\end{proposition}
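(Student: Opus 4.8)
The plan is to use the long exact sequence of the pair provided by Axiom (C1), together with the fact that a $\mathcal{R}$-deformation retract induces an isomorphism on $h^n$, and then invoke exactness to force the relative groups to vanish.

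First I would set up notation: let $A\stackrel{i}{\to}X$ be a $\mathcal{R}$-deformation retract, so there is $r:X\to A$ with $r\circ i=1_A$ and $(i\circ r)\mathcal{R}1_X$. I would observe that in $\mathcal{R}\mathcal{C}^2$ the class $[i]$ (viewed as the morphism $i:A\to X$, i.e. $X=(X,I\to X)$ as an object) is an isomorphism with inverse $[r]$: indeed $[r]\circ[i]=[r\circ i]=[1_A]$, and $[i]\circ[r]=[i\circ r]=[1_X]$ since $(i\circ r)\mathcal{R}1_X$. Since each $h^n$ is a contravariant functor on $\mathcal{R}\mathcal{C}^2$, it carries this isomorphism to an isomorphism, so $h^n[i]\colon h^n(X)\to h^n(A)$ is an isomorphism for every $n\in\mathbb{Z}$.

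Next I would write down the long exact sequence of Axiom (C1) for the pair $\left(X,A\stackrel{i}{\to}X\right)$:
\[\cdots\to h^{n-1}(A)\stackrel{\delta^{n-1}}{\to} h^n\left(X,A\right)\stackrel{h^n[\iota]}{\to} h^n(X)\stackrel{h^n[i]}{\to} h^n(A)\stackrel{\delta^n}{\to}\cdots\]
Since $h^n[i]$ is an isomorphism (in particular injective), exactness at $h^n(X)$ gives that the image of $h^n[\iota]$ equals the kernel of $h^n[i]$, which is $0$; hence $h^n[\iota]=0$. Likewise, since $h^{n-1}[i]\colon h^{n-1}(X)\to h^{n-1}(A)$ is surjective, exactness at $h^{n-1}(A)$ gives that the kernel of $\delta^{n-1}$ equals the image of $h^{n-1}[i]$, which is all of $h^{n-1}(A)$; hence $\delta^{n-1}=0$. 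Therefore in the segment $h^{n-1}(A)\stackrel{\delta^{n-1}}{\to} h^n(X,A)\stackrel{h^n[\iota]}{\to} h^n(X)$ the incoming map is zero and the outgoing map is zero, so exactness at $h^n(X,A)$ forces $h^n(X,A)=\operatorname{im}(\delta^{n-1})=0$. This holds for every $n\in\mathbb{Z}$. For the final sentence, a distinguished isomorphism $i$ is a $\mathcal{R}$-deformation retract with $r=i^{-1}$ (using reflexivity of $\mathcal{R}$ for $(i\circ i^{-1})\mathcal{R}1_X$), so the same conclusion applies.

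I do not expect a serious obstacle here; the only point requiring a little care is the bookkeeping in the exact sequence — making sure one extracts both that $h^n[\iota]$ is the zero map (from injectivity of $h^n[i]$) and that $\delta^{n-1}$ is the zero map (from surjectivity of $h^{n-1}[i]$), and then reading off $h^n(X,A)=0$ from exactness at the middle term. Everything else is a direct application of functoriality of the $h^n$ on $\mathcal{R}\mathcal{C}^2$ and of Axiom (C1); Axiom (C2) is not needed.
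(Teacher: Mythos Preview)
Your proof is correct and follows essentially the same approach as the paper: both use the long exact sequence from Axiom (C1), observe that $i^\ast$ (equivalently $h^n[i]$) is an isomorphism because $[i]$ and $[r]$ are inverse in $\mathcal{R}\mathcal{C}^2$, and then read off $h^n(X,A)=0$ from exactness. Your write-up is slightly more explicit about the exactness bookkeeping, but the argument is the same.
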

\begin{proof}
Let $r:X\to A$ be a $\mathcal{R}$-deformation retraction of $X$, meaning that $r\circ i=1_A$ and $(i\circ r)\mathcal{R} 1_X$. Consider the long exact sequence of the pair $\left(X,A\stackrel{i}{\to} X\right)$: \[\cdots\to h^{n-1}\left(A\right)\stackrel{\delta}{\to}
    h^n\left(X,A\stackrel{i}{\to} X\right)\stackrel{j^{\ast}}{\to}h^n\left(X\right)
    \stackrel{i^{\ast}}{\to} h^n\left(A\right)
    \stackrel{\delta}{\to}\cdots\] Since $i^\ast\circ r^\ast=1_{h^n(A)}$ and $r^\ast\circ i^\ast=1_{h^\ast(X)}$, we find that $i^\ast$ is an isomorphism. Consequently, we have  $\delta=0$ and $j^\ast=0$. 
    
    By the exactness of the sequence, it follows that $h^n\left(X, A\stackrel{i}{\to} X\right)=\text{Ker}~j^\ast=\text{im}~\delta=0$  for each $n\in\mathbb{Z}$.  
\end{proof}

Proposition~\ref{cohomology-retratopordefor} implies the following statement.    

\begin{corollary}\label{cor:lim-cilim-iso}
  Let $h^\ast=\{h^n,\delta^n\}_{n\in\mathbb{Z}}:\mathcal{R}\mathcal{C}^2\to\mathcal{A}$ be a family of contravariant functors $h^n$ and natural transformations $\delta^n$ satisfying the Axiom (C1). Let $\{A_\ell\stackrel{i_\ell}{\to} X\}_{\ell=1}^k$ be a covering of  
$X$. If the morphism $\displaystyle{\widetilde{\lim_{{\overset{\to}{1\leq \ell\leq k}}}} i_\ell}\to X$ (as given in Example~\ref{exam:limit-colimit}(1)) is an isomorphism, then \[h^n\left(X,\displaystyle{\widetilde{\lim_{{\overset{\to}{1\leq \ell\leq k}}}} i_\ell}\right)=0 \text{ for any $n\in\mathbb{Z}$}.\]   \end{corollary}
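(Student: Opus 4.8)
The plan is to derive Corollary~\ref{cor:lim-cilim-iso} as a direct consequence of Proposition~\ref{cohomology-retratopordefor}. The key observation is that an isomorphism, being a distinguished morphism (it belongs to a covering of $X$, hence distinguished by the general setup), is in particular a $\mathcal{R}$-deformation retract of $X$: if $\phi\colon\widetilde{\lim}_{1\leq\ell\leq k} i_\ell\to X$ is an isomorphism with inverse $r\colon X\to \widetilde{\lim}_{1\leq\ell\leq k} i_\ell$, then $\phi\circ r=1_X$ and $r\circ\phi=1_{\widetilde{\lim}}$, so certainly $(r\circ\phi)\mathcal{R}1_{\widetilde{\lim}}$ by reflexivity of $\mathcal{R}$, which is exactly the definition of $r$ being an $\mathcal{R}$-deformation retraction (with the roles of $i$ and $r$ matched to the definition: here the distinguished morphism $i=\phi$ and the retraction is $r$).

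First I would note that since $\{A_\ell\stackrel{i_\ell}{\to}X\}_{\ell=1}^k$ is a covering of $X$, the morphism $\widetilde{\lim}_{1\leq\ell\leq k} i_\ell\stackrel{\phi}{\to}X$ from Example~\ref{exam:limit-colimit}(1) is a legitimate object-with-distinguished-morphism to which the hypothesis about being an isomorphism can be applied. Then I would invoke the hypothesis that $\phi$ is an isomorphism, exhibit its inverse $r$, and verify the two conditions $r\circ\phi=1$ and $(\phi\circ r)\mathcal{R}1_X$ (the latter from reflexivity), so that $\phi$ is an $\mathcal{R}$-deformation retract of $X$ in the sense defined just before Proposition~\ref{cohomology-retratopordefor}. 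Finally I would apply Proposition~\ref{cohomology-retratopordefor} to conclude $h^n\left(X,\widetilde{\lim}_{1\leq\ell\leq k} i_\ell\right)=0$ for every $n\in\mathbb{Z}$.

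I do not expect any real obstacle here — the statement is essentially a specialization of Proposition~\ref{cohomology-retratopordefor}, and the only thing to check is the bookkeeping that an isomorphism fits the definition of an $\mathcal{R}$-deformation retract (which is the content of the last sentence of Proposition~\ref{cohomology-retratopordefor} itself). The mildly delicate point, if any, is matching the direction of the arrows: the definition of $\mathcal{R}$-deformation retract speaks of $A\stackrel{i}{\to}X$ with a retraction $r\colon X\to A$, and here $A$ plays the role of $\widetilde{\lim}_{1\leq\ell\leq k} i_\ell$ and $i$ the role of $\phi$, so one should be careful that $r$ is the inverse isomorphism $X\to\widetilde{\lim}_{1\leq\ell\leq k} i_\ell$ and not the other way around. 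Once this is set up correctly the proof is one line.

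Here is the proof I would write.

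\begin{proof}
Since $\{A_\ell\stackrel{i_\ell}{\to} X\}_{\ell=1}^k$ is a covering of $X$, the morphism $\phi\colon\displaystyle{\widetilde{\lim_{{\overset{\to}{1\leq \ell\leq k}}}} i_\ell}\to X$ of Example~\ref{exam:limit-colimit}(1) is a distinguished morphism with codomain $X$. By hypothesis $\phi$ is an isomorphism; let $r\colon X\to \displaystyle{\widetilde{\lim_{{\overset{\to}{1\leq \ell\leq k}}}} i_\ell}$ denote its inverse, so that $r\circ\phi=1_{\widetilde{\lim}}$ and $\phi\circ r=1_X$. In particular $(\phi\circ r)\mathcal{R}1_X$ by reflexivity of $\mathcal{R}$, so $r$ is an $\mathcal{R}$-deformation retraction and $\phi\colon\displaystyle{\widetilde{\lim_{{\overset{\to}{1\leq \ell\leq k}}}} i_\ell}\to X$ is an $\mathcal{R}$-deformation retract of $X$. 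Applying Proposition~\ref{cohomology-retratopordefor} yields \[h^n\left(X,\displaystyle{\widetilde{\lim_{{\overset{\to}{1\leq \ell\leq k}}}} i_\ell}\right)=0 \text{ for each } n\in\mathbb{Z}.\qedhere\]
\end{proof}
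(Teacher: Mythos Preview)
Your proof is correct and matches the paper's approach: the paper simply states that Proposition~\ref{cohomology-retratopordefor} implies the corollary, and you have spelled out precisely that implication (indeed, the ``In particular'' clause at the end of Proposition~\ref{cohomology-retratopordefor} already records the case of a distinguished isomorphism). The only minor refinement is that $\phi$ is distinguished not directly because the $i_\ell$ form a covering, but because each $i_\ell$ is distinguished by Definition~\ref{defn:distinguished-morphisms}(2) and then $\phi$ is distinguished by Definition~\ref{defn:distinguished-morphisms}(4).
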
 

Motivated by Corollary~\ref{cor:lim-cilim-iso}, we present the following property concerning covers.

\begin{example}[Lim-colim property]\label{exam:lim-colim-topology} 
 Let $\mathcal{C}$ be a category with finite limits, finite colimits, and covers. A finite covering $\{A_\ell\stackrel{i_\ell}{\to} C\}_{\ell=1}^k$ of $C$ satisfies the \textit{lim-colim property} if  the morphism $\displaystyle{\widetilde{\lim_{{\overset{\to}{1\leq \ell\leq k}}}} i_\ell}\to C$ (as given in Example~\ref{exam:limit-colimit}(1)) is an isomorphism. If every finite covering satisfies the lim-colim property, we say that the category $\mathcal{C}$ with covers satisfies the \textit{lim-colim property}.      
\end{example}

Any category with covers that satisfies the lim-colim property holds the following remark.

\begin{remark}
 Let $\mathcal{C}$ be a category with covers that satisfies the lim-colim property (see Example~\ref{exam:lim-colim-topology}), equipped with an initial object, finite limits, finite colimits, and  distinguished morphisms. Let $\mathcal{A}$ be the category of abelian groups or modules.  
 
 Let $h^\ast=\{h^n,\delta^n\}_{n\in\mathbb{Z}}:\mathcal{R}\mathcal{C}^2\to\mathcal{A}$ be a family of contravariant functors $h^n$ and natural transformations $\delta^n$ satisfying the Axiom (C1). By Corollary~\ref{cor:lim-cilim-iso}, the collection $h^\ast=\{h^n,\delta^n\}_{n\in\mathbb{Z}}:\mathcal{R}\mathcal{C}^2\to\mathcal{A}$ constitutes a $\mathcal{A}$-cohomology theory on $\mathcal{C}$.
\end{remark}

\subsection{Multiplicative cohomology theory} For a cohomology theory $h^\ast=\{h^n,\delta^n\}_{n\in\mathbb{Z}}:\mathcal{R}\mathcal{C}^2\to \mathcal{A}$, we will define the notion of a product over $h^\ast$. The cohomology group $h^\ast\left(X,A\stackrel{i}{\rightarrowtail} X\right)=\bigoplus_{n\in\mathbb{Z}} h^n\left(X,A\stackrel{i}{\rightarrowtail} X\right)$ (equivalently, $\displaystyle{h^\ast\left(X,A\stackrel{i}{\rightarrowtail} X\right)=\{h^n\left(X,A\stackrel{i}{\rightarrowtail} X\right)\}_{n\in\mathbb{Z}}}$) forms a graduated ring for each pair $\left(X,A\stackrel{i}{\rightarrowtail} X\right)\in \mathcal{R}\mathcal{C}^2$. To achieve this, we  will follow the ideas presented in \cite[Chapter 2, Section 6]{kono2006}.

\begin{definition}[Multiplicative cohomology theory]\label{multiplicative-cohomology} 
A cohomology theory $h^\ast=\{h^n,\delta^n\}_{n\in\mathbb{Z}}:\mathcal{R}\mathcal{C}^2\to \mathcal{A}$ is called \textit{multiplicative} if for each $m_1,\ldots,m_k\in\mathbb{Z}$ with $k\geq 2$, and any pairs $\left(X_1, A_1\stackrel{i_1}{\to} X_1\right),\ldots,\left(X_k, A_k\stackrel{i_k}{\to} X_k\right)$, there exists a homomorphism \[\mu_{m_1,\ldots,m_k}:h^{m_1}(X_1,A_1)\otimes\cdots\otimes h^{m_k}(X_k,A_k)\to h^{m_1+\cdots+m_k}\left(\prod_{\ell=1}^{k}X_\ell,\displaystyle{\widetilde{\lim_{{\overset{\to}{1\leq \ell\leq k}}}} i^\ell}\right),\] such that the collection $\mu=\{\mu_{m_1,\ldots,m_k}\}_{m_1,\ldots,m_k\in\mathbb{Z}}$ satisfies the following conditions:
\begin{itemize}
    \item[(1)] Each $\mu_{m_1,\ldots,m_k}$ is natural, meaning that for any morphisms of pairs $f_1:(X_1,A_1)\to (X^\prime_1,A^\prime_1),\ldots,f_k:(X_k,A_k)\to (X^\prime_k,A^\prime_k)$, we have \[(f_1\times\cdots\times f_k)^\ast\circ \mu_{m_1,\ldots,m_k}=\mu_{m_1,\ldots,m_k}\circ (f^\ast_1\otimes\cdots\otimes f^\ast_k).\] This can be illustrated with the following commutative diagram: 
    \begin{eqnarray*}
\xymatrix@C=3cm{ h^{m_1}(X_1,A_1)\otimes\cdots\otimes h^{m_k}(X_k,A_k) \ar[r]^{\mu_{m_1,\ldots,m_k}} & h^{m_1+\cdots+m_k}\left(\prod_{\ell=1}^{k}X_\ell,\displaystyle{\widetilde{\lim_{{\overset{\to}{1\leq \ell\leq k}}}} i^\ell}\right)  \\
       h^{m_1}(X'_1,A'_1)\otimes\cdots\otimes h^{m_k}(X'_k,B') \ar[r]_{\mu_{m_1,\ldots,m_k}}  \ar[u]^{f_1^\ast\otimes\cdots\otimes f^\ast_k} &  h^{m_1+\cdots+m_k}\left(\prod_{\ell=1}^{k}X'_\ell,\displaystyle{\widetilde{\lim_{{\overset{\to}{1\leq \ell\leq k}}}} (i')^\ell}\right)\ar[u]_{\left(f_1\times\cdots\times f_k\right)^\ast}}
\end{eqnarray*} where $f_1\times\cdots\times f_k$ denotes the morphism of pairs $\left(f_1\times\cdots\times f_k,\xi\right)$ (see Example~\ref{exam:product-}(2)).
    
    \item[(2)] (Commutativity) Consider the $i,j$-switching isomorphism for $i\neq j$: $$t:h^{m_1}(X_1,A_1)\otimes\cdots\otimes h^{m_k}(X_k,A_k)\to h^{m_1}(X_1,A_1)\otimes\cdots\otimes h^{m_k}(X_k,A_k)$$ defined by $t(u_1\otimes\cdots\otimes u_i\otimes\cdots\otimes u_j\otimes\cdots\otimes u_k)=(-1)^{m_im_j}u_1\otimes\cdots\otimes u_j\otimes\cdots\otimes u_i\otimes\cdots\otimes u_k.$ Also, consider the isomorphism of pairs  $$T:\left(\prod_{\ell=1}^{k}X_\ell,\displaystyle{\widetilde{\lim_{{\overset{\to}{1\leq \ell\leq k}}}} i^\ell}\right)\to \left(X_1\times\cdots\times X_j\times\cdots\times X_i\times\cdots\times X_k,\displaystyle{\widetilde{\lim_{\to}}(i^1,\ldots,i^j,\ldots,i^i,\ldots,i^k)}\right),$$ where \[\displaystyle{\widetilde{\lim_{\to}}(i^1,\ldots,i^k)}=\displaystyle{\widetilde{\lim_{{\overset{\to}{1\leq \ell\leq k}}}} i^\ell}.\] We have that $\mu$ satisfies the following condition:
    \[\mu_{m_1,\ldots,m_i,\ldots,m_j,\ldots,m_k}=T^\ast\circ\mu_{m_1,\ldots,m_j,\ldots,m_i,\ldots,m_k}\circ t.\] 
\end{itemize}
\end{definition}

In the case that $h^\ast$ is multiplicative, the following composition: 
\[\cup:h^{m_1}(X,A_1)\otimes\cdots\otimes h^{m_k}(X,A_k)\stackrel{\mu}{\to} h^{m_1+\cdots+m_k}\left(\prod_{\ell=1}^{k}X,\displaystyle{\widetilde{\lim_{{\overset{\to}{1\leq \ell\leq k}}}} i^\ell}\right)\stackrel{\Delta^\ast}{\to} h^{m_1+\cdots+m_k}\left(X,\displaystyle{\widetilde{\lim_{{\overset{\to}{1\leq \ell\leq k}}}} i_\ell}\right),\] defines a natural homomorphism for each $m_1,\ldots,m_k\in\mathbb{Z}$, where $\Delta$ is the diagonal morphism (see Example~\ref{exam:diagonal-}(3)). This homomorphism $\cup$ is called the \textit{intern product} or \textit{cup product}, while the homomorphism $\mu$ is referred to as the \textit{exterior product}. 

For each $x_1\in h^{m_1}(X,A_1),\ldots,x_k\in h^{m_k}(X,A_k)$, we denote $\cup(x_1\otimes\cdots\otimes x_k)$ as $x_1\cup\cdots\cup x_k$, and $\mu(x_1\otimes\cdots\otimes x_k)$ as  $x_1\times\cdots\times x_k$.

\begin{remark}
  \noindent\begin{enumerate}
       \item[(1)] One has \[\mu=\cup\circ (p_1^\ast\otimes\cdots\otimes p_k^\ast),\] which means that  the following diagram commutes:
       \begin{eqnarray*}
\xymatrix{h^{m_1}(X_1,A_1)\otimes\cdots\otimes h^{m_k}(X_k,A_k) \ar[rr]^{p_1^\ast\otimes\cdots\otimes p_k^\ast} \ar[drr]_{\mu}&  &h^{m_1}\left(\prod_{\ell=1}^{k}X_\ell,A^1\right)\otimes\cdots\otimes h^{m_k}\left(\prod_{\ell=1}^{k}X_\ell,A^k\right) \ar[d]^{\cup}  \\
        & &  h^{m_1+\cdots+m_k}\left(\prod_{\ell=1}^{k}X_\ell,\displaystyle{\widetilde{\lim_{{\overset{\to}{1\leq \ell\leq k}}}} i^\ell}\right)}
\end{eqnarray*} where each $p_i:\left(\prod_{\ell=1}^{k}X_\ell,A^i\right)\to (X_i,A_i)$ is the $i$-th projection. 
\item[(2)] Suppose that $A\stackrel{i}{\rightarrowtail} X$ is a monomorphism. Note that the cup product $\cup:h^m(X,A)\otimes h^n(X,A)\to h^{m+n}(X,A)$ is associative. This means that the following diagram commutes:
\begin{eqnarray*}
\xymatrix@C=3cm{ h^{m}(X,A)\otimes h^{n}(X,A)\otimes h^{k}(X,A) \ar[r]^{\cup\otimes 1_{h^{k}(X,A)}} \ar[d]^{1_{h^{m}(X,A)}\otimes\cup}&  h^{m+n}(X,A)\otimes h^{k}(X,A)\ar[d]^{\cup}  \\
       h^{m}(X,A)\otimes h^{n+k}(X,A)\ar[r]^{\cup} &  h^{m+n+k}(X,A)}
\end{eqnarray*}
As a consequence, the cohomology group
    \[h^\ast(X,A)=\bigoplus_{n\in\mathbb{Z}}h^n(X,A)\] together with the cup product $\cup$ forms a graduated ring. Furthermore, since $\mu$ is commutative, we have \[u\cup v=(-1)^{mn}v\cup u\] for any $u\in h^m(X,A)$ and $v\in h^n(X,A)$. Thus $h^\ast(X,A)$ is a graded commutative ring. 
\item[(3)] In the case that $I\stackrel{}{\rightarrowtail} X$ and $A\stackrel{i}{\rightarrowtail} X$ are monomorphisms, we have \[\cup:h^{m}\left(X\right)\otimes h^{n}\left(X,A\stackrel{i}{\rightarrowtail} X\right)\to h^{m+n}\left(X,A\stackrel{i}{\rightarrowtail} X\right)\] for any $m,n\in\mathbb{Z}$. Consequently, the cohomology group \[h^\ast\left(X,A\stackrel{i}{\rightarrowtail} X\right)=\bigoplus_{n\in\mathbb{Z}}h^n\left(X,A\stackrel{i}{\rightarrowtail} X\right)\] becomes a graduated module over the graduated ring $h^\ast(X)$. 
    \end{enumerate}
\end{remark} 

\medskip The following is a key property of the cup product. 

\begin{proposition}[Cup product]\label{diagonal-cup} Let $\left(X,A_1\stackrel{i_1}{\to} X\right),\ldots,\left(X,A_k\stackrel{i_k}{\to} X\right)$ be pairs, with $X\stackrel{\iota_1}{\to}(X,A_1),\ldots,X\stackrel{\iota_k}{\to}(X,A_k)$ and $X\stackrel{\iota}{\to }\left(X,\displaystyle{\widetilde{\lim_{{\overset{\to}{1\leq \ell\leq k}}}} i_\ell}\right)$ representing the usual morphisms of pairs. Then, for any $u_1\in h^{n_1}(X,A_1),\ldots,u_k\in h^{n_k}(X,A_k)$, we have: \[\iota^\ast(u_1\cup\cdots\cup u_k)=\iota_1^\ast(u_1)\cup\cdots\cup \iota_k^\ast(u_k).\] 
\end{proposition}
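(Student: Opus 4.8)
The plan is to reduce the statement to the naturality of the cup product together with the compatibility relation $\mu = \cup\circ(p_1^\ast\otimes\cdots\otimes p_k^\ast)$ recorded in the preceding remark, and to identify the correct morphism of pairs whose functoriality produces the claimed identity. Concretely, recall that $u_1\cup\cdots\cup u_k = \Delta^\ast\circ\mu(u_1\otimes\cdots\otimes u_k)$, where $\Delta=(\Delta,\xi)$ is the diagonal morphism of pairs from Example~\ref{exam:diagonal-}(3). So the left-hand side $\iota^\ast(u_1\cup\cdots\cup u_k)$ equals $\iota^\ast\circ\Delta^\ast\circ\mu(u_1\otimes\cdots\otimes u_k) = (\Delta\circ\iota)^\ast\circ\mu(u_1\otimes\cdots\otimes u_k)$, using functoriality of $h^\ast$ on $\mathcal{R}\mathcal{C}^2$.

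First I would unwind the composite $\Delta\circ\iota$ as a morphism of pairs. Here $\iota\colon X\to\bigl(X,\widetilde{\lim} i_\ell\bigr)$ is the morphism of pairs given by $1_X$ on the first coordinate, and $\Delta\colon\bigl(X,\widetilde{\lim}i_\ell\bigr)\to\bigl(\prod X,\widetilde{\lim}i^\ell\bigr)$ has underlying map the diagonal $\Delta\colon X\to\prod_{\ell=1}^k X$. Thus $\Delta\circ\iota\colon X\to\bigl(\prod X,\widetilde{\lim}i^\ell\bigr)$ is the morphism of pairs whose underlying morphism is $\Delta\colon X\to\prod_{\ell=1}^k X$, viewing $X$ as the pair $(X,I\to X)$. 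On the other side, for each $j$ let $p_j\colon\bigl(\prod X,A^j\bigr)\to(X,A_j)$ be the $j$-th projection of pairs; then $p_j\circ(\Delta\circ\iota)$, as a morphism $X\to(X,A_j)$, has underlying morphism $\mathrm{pr}_j\circ\Delta=1_X$, and so equals $\iota_j\colon X\to(X,A_j)$ (here I use that a morphism of pairs into $(X,A_j)$ out of the pair $(X,I\to X)$ is determined by its underlying morphism, since the source's distinguished arrow is the unique one from the initial object).

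Next I would assemble the computation. Using $\mu=\cup\circ(p_1^\ast\otimes\cdots\otimes p_k^\ast)$ and the naturality of $\cup$ (which follows from the naturality of $\mu$ in Definition~\ref{multiplicative-cohomology}(1) pulled back along the diagonal, or can be invoked directly), we get
\begin{align*}
\iota^\ast(u_1\cup\cdots\cup u_k)
&=(\Delta\circ\iota)^\ast\,\mu(u_1\otimes\cdots\otimes u_k)\\
&=(\Delta\circ\iota)^\ast\,\bigl(p_1^\ast(u_1)\cup\cdots\cup p_k^\ast(u_k)\bigr)\\
&=\bigl((\Delta\circ\iota)^\ast p_1^\ast(u_1)\bigr)\cup\cdots\cup\bigl((\Delta\circ\iota)^\ast p_k^\ast(u_k)\bigr)\\
&=\bigl((p_1\circ\Delta\circ\iota)^\ast(u_1)\bigr)\cup\cdots\cup\bigl((p_k\circ\Delta\circ\iota)^\ast(u_k)\bigr)\\
&=\iota_1^\ast(u_1)\cup\cdots\cup\iota_k^\ast(u_k),
\end{align*}
where the third equality is naturality of the (iterated) cup product with respect to the single morphism of pairs $\Delta\circ\iota$, and the last uses $p_j\circ\Delta\circ\iota=\iota_j$ established above.

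The main obstacle I anticipate is the bookkeeping in the third step: "naturality of $\cup$ with respect to a morphism of pairs" must be justified carefully, because $\cup$ was only defined as $\Delta^\ast\circ\mu$ and the pairs $(\prod X, A^j)$ and their colimit-objects $\widetilde{\lim}i^\ell$ must be tracked precisely so that the diagonal morphisms of pairs on source and target are compatible. I would handle this by writing the $k$-fold cup product on $X$ as $\Delta_X^\ast\circ\mu$ and on $\prod_{\ell}X$ as $\Delta_{\prod X}^\ast\circ\mu$, then checking that the square relating $(\Delta\circ\iota)\colon X\to(\prod X,\widetilde\lim i^\ell)$ and the diagonal morphisms commutes in $\mathcal{R}\mathcal{C}^2$ — i.e. that $(\Delta\circ\iota)$ followed by the relevant diagonal equals the diagonal of $X$ followed by the product morphism $(\Delta\circ\iota)\times\cdots\times(\Delta\circ\iota)$ — which is exactly where Definition~\ref{multiplicative-cohomology}(1) applied to the $k$ copies of $\Delta\circ\iota$ plus the universal properties of the limit–colimit construction in Example~\ref{exam:limit-colimit} do the work. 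Everything else is formal manipulation of contravariant functors.
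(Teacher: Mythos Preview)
Your argument is correct, but the paper's proof is more direct and sidesteps precisely the obstacle you flag. You rewrite $(\Delta\circ\iota)^\ast\mu$ via the identity $\mu=\cup\circ(p_1^\ast\otimes\cdots\otimes p_k^\ast)$ and then need naturality of $\cup$ with respect to $\Delta\circ\iota$; verifying that naturality (as you outline) forces you one level up, to a commutative square involving $\prod_{j}(\prod_\ell X)$ and the diagonals $\Delta_X$, $\Delta_{\prod X}$. The paper instead factors $\Delta\circ\iota$ directly as $(\iota_1\times\cdots\times\iota_k)\circ\Delta$ in $\mathcal{R}\mathcal{C}^2$, where the left $\Delta$ is the diagonal $X\to\bigl(\prod_\ell X,\widetilde{\lim}(i_I)^\ell\bigr)$ with $i_I\colon I\to X$ the arrow from the initial object, and then applies only the axiom of naturality of $\mu$ (Definition~\ref{multiplicative-cohomology}(1)) to the morphisms $\iota_\ell\colon X\to(X,A_\ell)$. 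This gives
\[
\iota^\ast\Delta^\ast\mu(u_1\otimes\cdots\otimes u_k)=\Delta^\ast(\iota_1\times\cdots\times\iota_k)^\ast\mu(u_1\otimes\cdots\otimes u_k)=\Delta^\ast\mu(\iota_1^\ast u_1\otimes\cdots\otimes\iota_k^\ast u_k),
\]
which is the right-hand side by the very definition of $\cup$. Thus the paper never leaves the level of $\prod_\ell X$, whereas your route effectively reproves the naturality of $\cup$ from that of $\mu$ one product-layer higher before descending again. Both arguments are valid; the paper's is simply the economical version, and your observation $p_j\circ\Delta\circ\iota=\iota_j$ is dual to the paper's factorization.
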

\begin{proof}
Let $I$ be the initial object of $\mathcal{C}$, and $i_I:I\to X$ be the unique morphism from $I$ to $X$. We have the following commutative diagram:
 \begin{eqnarray*}
\xymatrix@C=3cm{ X \ar[r]^{\,\,\iota\,\,} \ar[d]_{\Delta} & \left(X,\displaystyle{\widetilde{\lim_{{\overset{\to}{1\leq \ell\leq k}}}} i_\ell}\right)\ar[d]^{\Delta}  \\
       \left(\prod_{\ell=1}^{k}X,\displaystyle{\widetilde{\lim_{{\overset{\to}{1\leq \ell\leq k}}}} (i_I)^\ell}\right)  \ar[r]^{\iota_1\times\cdots\times \iota_k} &  \left(\prod_{\ell=1}^{k}X,\displaystyle{\widetilde{\lim_{{\overset{\to}{1\leq \ell\leq k}}}} i^\ell}\right) }
\end{eqnarray*} Here, $\iota_1\times\cdots\times \iota_k$ denotes the morphism of pairs given in Example~\ref{exam:product-}(2). This leads us to the following commutative diagram: 
\begin{eqnarray*}
\xymatrix@C=3cm{ h^{n_1+\cdots+n_k}(X)  & h^{n_1+\cdots+n_k}\left(X,\displaystyle{\widetilde{\lim_{{\overset{\to}{1\leq \ell\leq k}}}} i_\ell}\right)\ar[l]_{\,\,\iota^\ast\,\,}  \\
       h^{n_1+\cdots+n_k}\left(\prod_{\ell=1}^{k}X,\displaystyle{\widetilde{\lim_{{\overset{\to}{1\leq \ell\leq k}}}} (i_I)^\ell}\right)\ar[u]_{\Delta^\ast} &  h^{n_1+\cdots+n_k}\left(\prod_{\ell=1}^{k}X,\displaystyle{\widetilde{\lim_{{\overset{\to}{1\leq \ell\leq k}}}} i^\ell}\right)\ar[u]_{\Delta^\ast}\ar[l]_{\quad\quad(\iota_1\times\cdots\times \iota_k)^\ast\quad\quad\quad}  \\
      h^{n_1}(X)\otimes\cdots\otimes h^{n_k}(X)\ar[u]_{\mu} & h^{n_1}(X,A_1)\otimes\cdots\otimes h^{n_k}(X,A_k)\ar[u]_{\mu} \ar[l]_{\,\,\iota_1^\ast\otimes\cdots\otimes \iota_k^\ast\,\,}
       }
\end{eqnarray*}
 This implies the equality: \[\iota^\ast(u_1\cup\cdots\cup u_k)=\iota_1^\ast(u_1)\cup\cdots\cup \iota_k^\ast(u_k)\] for any $u_1\in h^{n_1}(X,A_1),\ldots,u_k\in h^{n_k}(X,A_k)$.   
\end{proof}

We have the following example. 

\begin{example}[Multiplicative generalized cohomology theory]\label{exam:multiplicative-generalized-cohomology-theory}
   One can observe that the category $\text{Top}$ has the empty space $\varnothing$ as an initial object, along with finite limits and finite colimits. We consider the open quasi Grothendieck topology,  Open, as presented in Example~\ref{exam:open-cover-space}. Additionally, we take any inclusion map as a distinguished morphism. Let $\mathcal{R}$ (which we also denote as $H$) be the homotopy relation $\simeq$ between maps of pairs. 
   
   Any multiplicative generalized cohomology theory $h^\ast=\{h^n,\delta^n\}_{n\in\mathbb{Z}}:\mathrm{H}\mathrm{Top}^2\to\mathcal{A}$ (as detailed in \cite[Chapter 2, Section 6]{kono2006}) constitutes a $\mathcal{A}$-multiplicative cohomology theory on $(\text{Top},\text{Open})$. Furthermore, it is noteworthy that the category with covers $(\text{Top},\text{Open})$ satisfies the lim-colim property. Specifically, for any finite covering $\{A_\ell\stackrel{i_\ell}{\hookrightarrow} X\}_{\ell=1}^k$ of $X$, we have: \[\displaystyle{\widetilde{\lim_{{\overset{\to}{1\leq \ell\leq k}}}} i_\ell}=\bigcup_{\ell=1}^{k} A_\ell=X,\] and the morphism $\displaystyle{\widetilde{\lim_{{\overset{\to}{1\leq \ell\leq k}}}} i_\ell}\to X$ is the identity map $1_X$ .
\end{example}

\subsection{Cohomological lower bound} As a principal result of this section, we present a cohomological lower bound for the $\mathcal{R}$-sectional number of a morphism, which is a tool widely used in computations. This arises as follows, following the notation established previously. 

\begin{theorem}[Cohomological lower bound]\label{thm:cohomo-ic} 
Let $\mathcal{C}$ be a category with an initial object, finite limits, finite colimits, covers, and distinguished morphisms. Let $h^\ast=\{h^n,\delta^n\}_{n\in\mathbb{Z}}:\mathcal{R}\mathcal{C}^2\to\mathcal{A}$ be a $\mathcal{A}$-multiplicative cohomology theory on $\mathcal{C}$. For any morphism $f:X\to Y$ in $\mathcal{C}$, we have
\[\mathrm{nil}\left(\mathrm{Ker}\left(f^\ast:h^\ast(Y)\to h^\ast(X)\right)\right)\leq\mathcal{R}\text{-}\mathrm{sec}(f).\]
\end{theorem}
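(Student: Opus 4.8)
The plan is to mimic the classical argument for the cohomological lower bound of the sectional category (property (6) in the introduction), adapting each step to the categorical framework built in Section~\ref{sec:cohomology}. Suppose $\mathcal{R}\text{-}\mathrm{sec}(f)=k<\infty$ (if it is infinite there is nothing to prove), and let $\{g_j\colon Y_j\to Y\}_{j=1}^{k}$ be an $\mathcal{R}$-categorical covering, so that for each $j$ there is a morphism $s_j\colon Y_j\to X$ with $(f\circ s_j)\mathcal{R}g_j$. First I would pass to the pair $\bigl(Y,\widetilde{\lim}_{1\le \ell\le k}\, g_\ell\bigr)$ in $\mathcal{R}\mathcal{C}^2$; by Axiom (C2) we have $h^n\bigl(Y,\widetilde{\lim}_{1\le \ell\le k}\, g_\ell\bigr)=0$ for all $n$. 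The heart of the argument is then: given $k$ classes $u_1,\dots,u_k\in \mathrm{Ker}\bigl(f^\ast\colon h^\ast(Y)\to h^\ast(X)\bigr)$, I want to show $u_1\cup\cdots\cup u_k=0$, which gives $\mathrm{nil}\bigl(\mathrm{Ker}(f^\ast)\bigr)\le k$.

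The key step is to lift each $u_j$ to a relative class. For each $j$, consider the pair $\bigl(Y, Y_j\stackrel{g_j}{\to} Y\bigr)$ and the long exact sequence of Axiom (C1): one has $h^n(Y,Y_j)\stackrel{\iota_j^\ast}{\to} h^n(Y)\stackrel{g_j^\ast}{\to} h^n(Y_j)$. Because $(f\circ s_j)\mathcal{R}g_j$ and $h^\ast$ factors through $\mathcal{R}\mathcal{C}^2$, we get $g_j^\ast=(f\circ s_j)^\ast=s_j^\ast\circ f^\ast$; hence $g_j^\ast(u_j)=s_j^\ast(f^\ast(u_j))=s_j^\ast(0)=0$ since $u_j\in\mathrm{Ker}(f^\ast)$. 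By exactness, $u_j=\iota_j^\ast(\bar u_j)$ for some $\bar u_j\in h^{n_j}(Y,Y_j)$, where $\iota_j\colon Y\to (Y,Y_j)$ is the usual morphism of pairs. Now apply Proposition~\ref{diagonal-cup} with $A_\ell = Y_\ell$, $i_\ell = g_\ell$: the relative cup product $\bar u_1\cup\cdots\cup \bar u_k\in h^{n_1+\cdots+n_k}\bigl(Y,\widetilde{\lim}_{1\le \ell\le k}\, g_\ell\bigr)$ is defined, and $\iota^\ast(\bar u_1\cup\cdots\cup\bar u_k)=\iota_1^\ast(\bar u_1)\cup\cdots\cup\iota_k^\ast(\bar u_k)=u_1\cup\cdots\cup u_k$, where $\iota\colon Y\to\bigl(Y,\widetilde{\lim}_{1\le\ell\le k}\,g_\ell\bigr)$. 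But $\bar u_1\cup\cdots\cup\bar u_k$ lives in a group that is zero by Axiom (C2), so $u_1\cup\cdots\cup u_k=\iota^\ast(0)=0$. This shows every $k$-fold product of elements of $\mathrm{Ker}(f^\ast)$ vanishes, i.e. $\mathrm{nil}\bigl(\mathrm{Ker}(f^\ast)\bigr)\le k=\mathcal{R}\text{-}\mathrm{sec}(f)$.

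The main obstacle I anticipate is bookkeeping around the covering: a covering in the sense of Definition~\ref{def:covering} is a collection generating a covering sieve, and Axiom (C2) is stated for the colimit morphism $\widetilde{\lim}_{1\le\ell\le k}\,g_\ell\to Y$ attached to such a covering, so I must be careful that the finite family $\{g_j\}_{j=1}^k$ realizing the sectional number is genuinely a \emph{finite covering} in the sense required by (C2), and that the construction $\widetilde{\lim}$ of Example~\ref{exam:lim-colim-construction}(1) is available (which it is, since $\mathcal{C}$ has finite limits and finite colimits). A secondary subtlety is that the relative classes $\bar u_j$ are not canonical (only their images $\iota_j^\ast(\bar u_j)=u_j$ are pinned down), but this does not matter: Proposition~\ref{diagonal-cup} is an identity of elements once the $\bar u_j$ are chosen, and the target group being zero forces the conclusion regardless of the choices. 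One should also note $u_1\cup\cdots\cup u_k$ is computed via the diagonal morphism of Example~\ref{exam:lim-colim-construction}(3), exactly the product used in Proposition~\ref{diagonal-cup}, so the two cup products agree and the argument closes.
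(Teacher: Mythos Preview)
Your proof is correct and follows essentially the same route as the paper's own argument: pick an $\mathcal{R}$-categorical covering realizing $\mathcal{R}\text{-}\mathrm{sec}(f)$, use the $\mathcal{R}$-sections together with functoriality through $\mathcal{R}\mathcal{C}^2$ to get $g_j^\ast(u_j)=0$, lift each $u_j$ to a relative class via the exact sequence of Axiom~(C1), take their relative cup product in $h^\ast\bigl(Y,\widetilde{\lim}_{1\le\ell\le k}\,g_\ell\bigr)$, and kill it with Axiom~(C2) and Proposition~\ref{diagonal-cup}. The only cosmetic difference is that the paper phrases the final step as a contradiction (assume a nonzero $k$-fold product exists) whereas you argue directly that every $k$-fold product vanishes; the content is identical.
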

\begin{proof}
  Let $\mathcal{R}\text{-}\mathrm{sec}(f)=m<\infty$ and consider a covering $\{j_\ell:B_\ell\to Y\}_{\ell=1}^{m}$ of $Y$ such that for each $\ell=1,\ldots,m$, the morphism $f$ admits a $\mathcal{R}$-$j_\ell$-section:
  \begin{eqnarray*}
\xymatrix{ X \ar[rr]^{f} & &Y   \\
        &  B_\ell\ar@{-->}[lu]^{}\ar[ru]_{j_\ell} & } 
\end{eqnarray*}
Suppose that $\mathrm{nil}\left(\mathrm{Ker}\left(f^\ast:h^\ast(Y)\to h^\ast(X)\right)\right)>m$. Then, there exist $\alpha_\ell\in h^{n_\ell}(Y)$ such that $f^\ast(\alpha_\ell)=0$ for each $\ell=1,\ldots,m$ and  $\alpha_1\cup\cdots\cup\alpha_m\neq 0$. 

For each $\ell=1,\ldots,m$, we have the following commutative diagram:
    \begin{eqnarray*} 
\xymatrix{ h^{n_\ell}(X)\ar[rd]_{}  & & h^{n_\ell}(Y)\ar[ll]_{f^\ast}  \ar[dl]^{\left(j_\ell\right)^\ast}  \\
        &  h^{n_\ell}(B_\ell) & } 
\end{eqnarray*} Thus, $\left(j_\ell\right)^\ast(\alpha_\ell)=0$.

Now, for each $\ell=1,\ldots,m$, we consider the long exact sequence associated with the pair $\left(Y,B_\ell\stackrel{j_\ell}{\to} Y\right)$:
    \[\cdots\to h^{n_\ell}\left(Y,B_\ell\right)\stackrel{\iota_\ell^\ast}{\to}
    h^{n_\ell}(Y)\stackrel{\left(j_\ell\right)^\ast}{\to}h^{n_\ell}\left(B_\ell\right)
    \stackrel{\delta}{\to} h^{n_\ell+1}\left(Y,B_\ell\right)\to\cdots\]
 where $\iota_\ell:Y\to \left(Y,B_\ell\right)$ is the usual morphism. Therefore, $\alpha_\ell\in \mathrm{Ker}\left(\left(j_\ell\right)^\ast\right)=\mathrm{im}\left(\iota_\ell^\ast\right)$, and thus there exists a cohomology class $\widetilde{\alpha_\ell}\in h^{n_\ell}\left(Y,B_\ell\right)$ such that $\iota_\ell^\ast(\widetilde{\alpha_\ell})=\alpha_\ell$.
 
Consequently, we have $\widetilde{\alpha_1}\cup\cdots\cup \widetilde{\alpha_m}=0$ because $\widetilde{\alpha_1}\cup\cdots\cup \widetilde{\alpha_m}\in h^{n_1+\cdots+n_m}\left(Y,\displaystyle{\widetilde{\lim_{{\overset{\to}{1\leq \ell\leq m}}}} j_\ell}\right)=0$ (here we use the fact that $\{B_\ell\stackrel{j_\ell}{\to} Y\}_{\ell=1}^m$ is a finite covering of $Y$ together with Axiom (C2)). Hence, we have: \[0=\iota^\ast\left(\widetilde{\alpha_1}\cup\cdots\cup\widetilde{\alpha_m}\right)=\alpha_1\cup\cdots\cup\alpha_m.\] The last equality follows from Proposition~\ref{diagonal-cup}, leading to a contradiction. Therefore, we conclude that: \[\mathrm{nil}\left(\mathrm{Ker}\left(f^\ast:h^\ast(Y)\to h^\ast(X)\right)\right)\leq m=\mathcal{R}\text{-}\mathrm{sec}(f).\] 
\end{proof}   

In concrete cases, such as those explored in standard sectional theory, we do not aim to compute the entire kernel of the homomorphism
$f^\ast:h^\ast(Y)\to h^\ast(X)$. Instead, we focus on identifying 
 specific elements within the kernel and seek to establish long non-trivial products. 

\medskip Example~\ref{exam:multiplicative-generalized-cohomology-theory} and Theorem~\ref{thm:cohomo-ic} together imply the property (6) mentioned in the introduction.

\begin{corollary}[Cohomological lower bound for the usual sectional theory]
 Let $h^\ast=\{h^n,\delta^n\}_{n\in\mathbb{Z}}:\mathrm{H}\mathrm{Top}^2\to\mathcal{A}$ be any multiplicative generalized cohomology theory. For a map  $f:X\to Y$, we have  \[\mathrm{nil}\left(\mathrm{Ker}\left(f^\ast:h^\ast(Y)\to h^\ast(X)\right)\right)\leq\text{secat}(f).\]
\end{corollary}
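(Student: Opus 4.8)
The plan is to derive this corollary as a direct specialization of Theorem~\ref{thm:cohomo-ic} together with Example~\ref{exam:multiplicative-generalized-cohomology-theory}. First I would record the setup: take $\mathcal{C}=\text{Top}$ with the open quasi Grothendieck topology $\text{Open}$ of Example~\ref{exam:open-cover-space}, with $\varnothing$ as initial object, with the (finite) limits and colimits that $\text{Top}$ possesses, with inclusion maps as the distinguished morphisms, and with $\mathcal{R}=H$ the homotopy relation $\simeq$ on maps of pairs. By Example~\ref{exam:multiplicative-generalized-cohomology-theory}, any multiplicative generalized cohomology theory $h^\ast=\{h^n,\delta^n\}_{n\in\mathbb{Z}}\colon\mathrm{H}\mathrm{Top}^2\to\mathcal{A}$ is a $\mathcal{A}$-multiplicative cohomology theory on $(\text{Top},\text{Open})$ in the sense of Definition~\ref{multiplicative-cohomology}; in particular the lim-colim property holds, so that for a finite open cover $\{A_\ell\hookrightarrow X\}_{\ell=1}^{k}$ one has $\widetilde{\lim}_{1\le\ell\le k} i_\ell\to X$ equal to $1_X$, and Axiom (C2) is automatic.

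Next I would invoke Theorem~\ref{thm:cohomo-ic} verbatim for this data: for any map $f\colon X\to Y$ in $\text{Top}$,
\[
\mathrm{nil}\left(\mathrm{Ker}\left(f^\ast\colon h^\ast(Y)\to h^\ast(X)\right)\right)\leq H\text{-}\mathrm{sec}(f).
\]
The only remaining point is to identify the right-hand side with $\text{secat}(f)$. This is exactly the content of Example~\ref{exam:sectional-number-Top}(2): with $\mathcal{R}$ the homotopy relation $\simeq$, the $\mathcal{R}$-sectional number in $(\text{Top},\text{Open})$ coincides with the sectional category $\text{secat}(f)$. (Here one uses that $\simeq$ is an equivalence relation preserving composition, so Proposition~\ref{prop:mono-sec-coincides} applies and the value is unambiguous.) Substituting $H\text{-}\mathrm{sec}(f)=\text{secat}(f)$ into the displayed inequality gives the claim.

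There is essentially no obstacle here beyond bookkeeping: the corollary is a translation of the general theorem into the classical topological setting, and every ingredient — the multiplicativity of $h^\ast$ on $\mathrm{H}\mathrm{Top}^2$, the lim-colim property of $(\text{Top},\text{Open})$, and the identification of the homotopy $\mathcal{R}$-sectional number with $\text{secat}$ — has already been established earlier in the paper. The one mild subtlety worth a sentence in the write-up is that generalized cohomology theories are indexed on pairs of spaces with cofibration-type inclusions, and one should note that the pairs $(X,A)$ arising in Axiom (C2), namely $(X,\bigcup_{\ell} A_\ell)=(X,X)$, contribute trivially so that no further hypotheses (CW, cofibrancy) are needed for the inequality to hold.

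\begin{proof}
Equip $\text{Top}$ with the open quasi Grothendieck topology $\text{Open}$ (Example~\ref{exam:open-cover-space}), with $\varnothing$ as initial object, with inclusions as distinguished morphisms, and with $\mathcal{R}=H$ the homotopy relation $\simeq$ on morphisms of pairs. By Example~\ref{exam:multiplicative-generalized-cohomology-theory}, any multiplicative generalized cohomology theory $h^\ast=\{h^n,\delta^n\}_{n\in\mathbb{Z}}\colon\mathrm{H}\mathrm{Top}^2\to\mathcal{A}$ is a $\mathcal{A}$-multiplicative cohomology theory on $(\text{Top},\text{Open})$; indeed, for every finite open covering $\{A_\ell\stackrel{i_\ell}{\hookrightarrow} X\}_{\ell=1}^{k}$ one has $\widetilde{\lim}_{1\le\ell\le k} i_\ell=\bigcup_{\ell=1}^{k}A_\ell=X$ with $\widetilde{\lim}_{1\le\ell\le k} i_\ell\to X$ equal to $1_X$, so that Axiom (C2) holds. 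Applying Theorem~\ref{thm:cohomo-ic} to $f\colon X\to Y$ yields
\[
\mathrm{nil}\left(\mathrm{Ker}\left(f^\ast\colon h^\ast(Y)\to h^\ast(X)\right)\right)\leq H\text{-}\mathrm{sec}(f).
\]
Since $\simeq$ is an equivalence relation preserving composition, Proposition~\ref{prop:mono-sec-coincides} applies, and by Example~\ref{exam:sectional-number-Top}(2) we have $H\text{-}\mathrm{sec}(f)=\text{secat}(f)$. Combining these gives
\[
\mathrm{nil}\left(\mathrm{Ker}\left(f^\ast\colon h^\ast(Y)\to h^\ast(X)\right)\right)\leq\text{secat}(f),
\]
as claimed.
\end{proof}
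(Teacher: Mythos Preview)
Your proof is correct and follows essentially the same approach as the paper: the paper simply states that the corollary follows from Example~\ref{exam:multiplicative-generalized-cohomology-theory} and Theorem~\ref{thm:cohomo-ic} together, and you have spelled out precisely those two ingredients plus the identification $H\text{-}\mathrm{sec}(f)=\text{secat}(f)$ from Example~\ref{exam:sectional-number-Top}(2). Your write-up is in fact more detailed than the paper's one-line justification, but the route is the same.
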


Theorem~\ref{thm:cohomo-ic} together with Example~\ref{exam:sectional-number-Set} leads to the following example.

\begin{example}
Consider the topology on Set as presented in Example~\ref{exam:cover-set}, along with a fixed collection of distinguished morphisms. If $h^\ast=\{h^n,\delta^n\}_{n\in\mathbb{Z}}:\text{Set}^2\to\mathcal{A}$ is any $\mathcal{A}$-multiplicative cohomology theory on Set, then the kernel of the homomorphism \[\left(\pi_1^P\right)^\ast:h^\ast\left(P\right)\to h^\ast\left(F_{<}(P)\right)\] is trivial. Here, $P$ denotes the set of prime numbers, $F_{<}(P)=\{(p,p')\in P\times P:~p<p'\}$, and $\pi_1^P:F_{<}(P)\to P$ is defined by $\pi_1^P(p,p')=p$ (see Example~\ref{exam:sectional-number-Set}).    
\end{example}


    

\bibliographystyle{plain}

\begin{thebibliography}{10} 

\bibitem{aguilar2002} Aguilar, M. \& Gitler, S. \& Prieto, C. (2002). Algebraic topology from a homotopical viewpoint. New York: Springer.

\bibitem{berstein1961} Berstein I, Ganea T. (1962). The category of a map and a cohomology class. Fundamenta Mathematicae, 50 (3): 265-279. 

\bibitem{cohen2021} Cohen, D. C., Farber, M., \& Weinberger, S. (2021). Topology of parametrized motion planning algorithms. SIAM Journal on Applied Algebra and Geometry, 5(2), 229-249.

\bibitem{cohn1994} Cohn, J. H. (1994). On n-sum groups. Mathematica Scandinavica, 44-58.

\bibitem{colman2017} Colman, H., \& Grant, M. (2013). Equivariant topological complexity. Algebraic \& Geometric Topology, 12(4), 2299-2316.

\bibitem{cornea2003} Cornea, O.,  Lupton, G., Oprea, J., \& Tanr{\'e}, D. (2003). Lusternik-Schnirelmann Category. Mathematical Surveys and Monographs, 103 (American Mathematical Society, Providence, RI, 2003).

\bibitem{farber2003} Farber, M. (2003). Topological complexity of motion planning. Discrete \& Computational Geometry, 29, 211-221.

\bibitem{farber2008} Farber, M. (2008). Invitation to topological robotics (Vol. 8). European Mathematical Society.

\bibitem{farber2003projective} Farber, M., Tabachnikov, S., \& Yuzvinsky, S. (2003). Topological robotics: motion planning in projective spaces. International Mathematics Research Notices, 2003(34), 1853-1870.

\bibitem{haber1959} Haber, S., \& Rosenfeld, A. (1959). Groups as unions of proper subgroups. The American Mathematical Monthly, 66(6), 491-494.

\bibitem{khare-tikaradze2022} Khare, A., \& Tikaradze, A. (2022). Covering modules by proper submodules. Communications in Algebra, 50(2), 498-507.

\bibitem{kono2006} Kōno, A., \& Tamaki, D. (2006). Generalized cohomology (Vol. 230). American Mathematical Society.

\bibitem{lurieultra2018} Lurie J. Ultracategories. Preprint available at www. math. harvard. edu/~ lurie/papers/Conceptual. pdf (2018).

\bibitem{maclane1978} Mac Lane, S. (1978). Categories for the Working Mathematician. Graduate Texts in Mathematics.

\bibitem{maclane1992} Mac Lane, S., \& Moerdijk, I. (1992). Sheaves in geometry and logic: a first introduction to topos theory. Springer Book Archive-Mathematics.

\bibitem{pavesic2019} Pave{\v{s}}i{\'c} P. (2019). Topological complexity of a map. Homology, Homotopy and Applications. International Press of Boston, 21 (2): 107-130. ArXiv preprint arXiv:1809.09021 (2019).

\bibitem{rudyak2010higher} Rudyak Y. On higher analogs of topological complexity. Topology and its Applications 2010; 157 (5): 916-920. Erratum in Topology and its Applications 2010; 157 (6): 1118.

\bibitem{schwarz1966} Schwarz A.S. (1966). The genus of fiber space. American Mathematical Society Translations. Series 2, 55: Eleven papers on topology and algebra 1966: 49-140.

\bibitem{swartz-werner2024} Swartz, E., \& Werner, N. J. (2024). The covering numbers of rings. Journal of Algebra, 639, 249-280.

\bibitem{zapata2022phd} Zapata, C. A. I. (2022). Espaços de configurações no problema de planificação de movimento simultâneo livre de colisões. Ph.D thesis, Universidade de São Paulo.

\bibitem{zapata-daciberg2023} Zapata, C. A. I., \& Gonçalves, D. L. (2023). Borsuk–Ulam Property and Sectional Category. Bulletin of the Iranian Mathematical Society, 49(4), 41.

\bibitem{zapata-daciberg-arxiv} Zapata, C. A. I., \& Gonçalves, D. L. (2023). Index and Sectional Category. arXiv preprint arXiv:2312.11957.

\bibitem{zapata-daciberg-param} Zapata, C. A. I., \& Gonçalves, D. L. (2024). Sectional Category and the parametrized Borsuk-Ulam property. arXiv preprint arXiv:2404.04470.

\bibitem{zapata2020} Zapata, C. A. I., \& González, J. (2020). Sectional category and the fixed point property. Topological Methods in Nonlinear Analysis, 56(2), 559-578.

\bibitem{zapata2022} Zapata, C. A. I., \& González, J. (2023). Higher topological complexity of a map. Turkish Journal of Mathematics, 47(6), 1616-1642.   

\bibitem{zapata2023} Zapata, C. A. I. \& Ramos, W. F. C. (2023). Número seccional de un homomorfismo de grafos. Pesquimat 26.2: 39-46.

\end{thebibliography}

\end{document}